\documentclass[11pt,twoside]{article}
\usepackage[nohyperref,abbrvbib]{jmlr2e}
\usepackage{ifpdf} 
\usepackage{amssymb,amsmath}
\usepackage{graphicx}
\usepackage{psfrag}
\usepackage{enumitem}
\usepackage{newtxtext}
\usepackage[smallerops]{newtxmath}
\usepackage{algorithm}
\usepackage{algorithmic}

\algsetup{indent=1.5em}

\usepackage{tikz}


\newtheorem{assumption}{Assumption}

\newcommand{\minimize}{\mathop{\mathrm{minimize}{}}}

\newcommand{\argmin}{\mathop{\mathrm{arg\,min}{}}}

\newcommand{\prox}{\mathbf{prox}}

\newcommand{\eqdef}{\stackrel{\triangle}{=}}

\newcommand{\ones}{\mathbf{1}}
\newcommand{\R}{\mathbf{R}}

\newcommand{\E}{\mathbf{E}}
\newcommand{\Prob}{\mathbf{P}}

\newcommand{\Bdiv}{\mathcal{B}}

\newcommand{\condbat}{\kappa_{\mathrm{bat}}}

\newcommand{\condrand}{\kappa_{\mathrm{rand}}}

\newcommand{\Lagr}{{L}}

\newcommand{\Ptld}{\widetilde{P}}
\newcommand{\Dtld}{\widetilde{D}}
\newcommand{\aopt}{\alpha^{\star}}
\newcommand{\aini}{\alpha^{(0)}}
\newcommand{\at}{\alpha^{(t)}}

\newcommand{\aM}{\alpha^{(M)}}
\newcommand{\atp}{\alpha^{(t+1)}}
\newcommand{\ai}{\alpha_i}
\newcommand{\aj}{\alpha_j}
\newcommand{\aiopt}{\alpha_i^{\star}}
\newcommand{\aiini}{\alpha_i^{(0)}}
\newcommand{\ait}{\alpha_i^{(t)}}
\newcommand{\aitp}{\alpha_i^{(t+1)}}
\newcommand{\ajt}{\alpha_j^{(t)}}

\newcommand{\aitld}{\tilde{\alpha}_{i}^{(t+1)}}
\newcommand{\aitldr}{\tilde{\alpha}_{i}^{(r)}}
\newcommand{\atld}{\tilde{\alpha}}
\newcommand{\atldopt}{\tilde{\alpha}^{\star}}
\newcommand{\atldini}{\tilde{\alpha}^{(0)}}
\newcommand{\atldr}{\tilde{\alpha}^{(r)}}
\newcommand{\atldrp}{\tilde{\alpha}^{(r+1)}}
\newcommand{\abs}{\bar{\alpha}^{(s)}}
\newcommand{\abS}{\bar{\alpha}^{(S)}}
\newcommand{\absp}{\bar{\alpha}^{(s+1)}}
\newcommand{\abiini}{\bar{\alpha}_{i}^{(0)}}
\newcommand{\abis}{\bar{\alpha}_{i}^{(s)}}
\newcommand{\abjs}{\bar{\alpha}_{j}^{(s)}}
\newcommand{\abini}{\bar{\alpha}^{(0)}}
\newcommand{\bit}{\beta_i^{(t)}}
\newcommand{\bitp}{\beta_i^{(t+1)}}
\newcommand{\biini}{\beta_i^{(0)}}
\newcommand{\jt}{{j^{(t)}}}

\newcommand{\jtps}{{j^{(t+s)}}}
\newcommand{\lt}{{l^{(t)}}}

\newcommand{\ltps}{{l^{(t+s)}}}
\newcommand{\uit}{{u_i^{(t)}}}
\newcommand{\uitp}{{u_i^{(t+1)}}}
\newcommand{\ujtp}{{u_j^{(t+1)}}}
\newcommand{\ubs}{\bar{u}^{(s)}}
\newcommand{\ubt}{\bar{u}^{(t)}}
\newcommand{\ubini}{\bar{u}^{(0)}}
\newcommand{\ubis}{\bar{u}_i^{(s)}}
\newcommand{\ubjs}{\bar{u}_j^{(s)}}
\newcommand{\ubit}{\bar{u}_i^{(t)}}
\newcommand{\ubitp}{\bar{u}_i^{(t+1)}}
\newcommand{\ubjt}{\bar{u}_j^{(t)}}
\newcommand{\ubjtp}{\bar{u}_j^{(t+1)}}
\newcommand{\vkt}{{v_k^{(t)}}}
\newcommand{\vktp}{{v_k^{(t+1)}}}
\newcommand{\vltp}{{v_l^{(t+1)}}}
\newcommand{\vbs}{\bar{v}^{(s)}}
\newcommand{\vbt}{\bar{v}^{(t)}}
\newcommand{\vbini}{\bar{v}^{(0)}}
\newcommand{\vbks}{\bar{v}_k^{(s)}}
\newcommand{\vbkt}{\bar{v}_k^{(t)}}
\newcommand{\vbktp}{\bar{v}_k^{(t+1)}}
\newcommand{\vbls}{\bar{v}_l^{(s)}}
\newcommand{\vblt}{\bar{v}_l^{(t)}}
\newcommand{\vbltp}{\bar{v}_l^{(t+1)}}
\newcommand{\wk}{{w_k}}
\newcommand{\wl}{{w_l}}
\newcommand{\wopt}{{w^{\star}}}
\newcommand{\wini}{{w^{(0)}}}
\newcommand{\wt}{{w^{(t)}}}

\newcommand{\wM}{{w^{(M)}}}
\newcommand{\wtp}{{w^{(t+1)}}}
\newcommand{\wkopt}{{w_k^{\star}}}
\newcommand{\wkini}{{w_k^{(0)}}}
\newcommand{\wkt}{{w_k^{(t)}}}
\newcommand{\wktp}{{w_k^{(t+1)}}}
\newcommand{\wlt}{{w_l^{(t)}}}
\newcommand{\wltp}{{w_l^{(t+1)}}}
\newcommand{\wktld}{\tilde{w}_{k}^{(t+1)}}
\newcommand{\wktldr}{\tilde{w}_{k}^{(r)}}
\newcommand{\wtld}{\tilde{w}}
\newcommand{\wtldopt}{\tilde{w}^{\star}}
\newcommand{\wtldini}{\tilde{w}^{(0)}}
\newcommand{\wtldr}{\tilde{w}^{(r)}}
\newcommand{\wtldrp}{\tilde{w}^{(r+1)}}
\newcommand{\wbs}{\bar{w}^{(s)}}
\newcommand{\wbS}{\bar{w}^{(S)}}
\newcommand{\wbsp}{\bar{w}^{(s+1)}}
\newcommand{\wbkini}{\bar{w}_k^{(0)}}
\newcommand{\wbks}{\bar{w}_k^{(s)}}
\newcommand{\wbls}{\bar{w}_l^{(s)}}
\newcommand{\wbini}{\bar{w}^{(0)}}
\newcommand{\ztld}{\tilde{z}}
\newcommand{\zopt}{{z}^{\star}}

\newcommand{\zit}{{z_i^{(t)}}}
\newcommand{\zt}{{z}^{(t)}}

\newcommand{\ztldini}{\tilde{z}^{(0)}}
\newcommand{\ztldr}{\tilde{z}^{(r)}}
\newcommand{\ztldrp}{\tilde{z}^{(r+1)}}
\newcommand{\ztldopt}{\tilde{z}^{\star}}
\newcommand{\ztldoptr}{\tilde{z}^{\star(r)}}
\newcommand{\Dt}{\Delta^{(t)}}
\newcommand{\DT}{\Delta^{(T)}}
\newcommand{\DM}{\Delta^{(M)}}
\newcommand{\Dtp}{\Delta^{(t+1)}}
\newcommand{\Dini}{\Delta^{(0)}}
\newcommand{\Dbini}{\bar{\Delta}^{(0)}}
\newcommand{\Dbs}{\bar{\Delta}^{(s)}}
\newcommand{\Dbsp}{\bar{\Delta}^{(s+1)}}
\newcommand{\Xik}{{X_{ik}}}
\newcommand{\Xil}{{X_{il}}}
\newcommand{\Xjl}{{X_{jl}}}
\newcommand{\Xjk}{{X_{jk}}}
\newcommand{\Xri}{{X_{i:}}}
\newcommand{\Xrj}{{X_{j:}}}
\newcommand{\Xck}{{X_{:k}}}
\newcommand{\Xcl}{{X_{:l}}}
\newcommand{\At}{{A^{(t)}}}
\newcommand{\Aini}{{A^{(0)}}}
\newcommand{\Aikini}{{A_{ik}^{(0)}}}
\newcommand{\Aikt}{{A_{ik}^{(t)}}}

\newcommand{\Aiktp}{{A_{ik}^{(t+1)}}}

\newcommand{\Ut}{{U^{(t)}}}
\newcommand{\Uikini}{{U_{ik}^{(0)}}}
\newcommand{\Uikt}{{U_{ik}^{(t)}}}
\newcommand{\Ujkt}{{U_{jk}^{(t)}}}
\newcommand{\Uilt}{{U_{il}^{(t)}}}
\newcommand{\Uiktp}{{U_{ik}^{(t+1)}}}
\newcommand{\Ujlt}{{U_{jl}^{(t)}}}

\newcommand{\Vt}{{V^{(t)}}}

\newcommand{\Vikini}{{V_{ik}^{(0)}}}
\newcommand{\Vikt}{{V_{ik}^{(t)}}}
\newcommand{\Vilt}{{V_{il}^{(t)}}}
\newcommand{\Vjkt}{{V_{jk}^{(t)}}}
\newcommand{\Viktp}{{V_{ik}^{(t+1)}}}
\newcommand{\Vjlt}{{V_{jl}^{(t)}}}
\newcommand{\Wt}{{W^{(t)}}}
\newcommand{\Wini}{{W^{(0)}}}
\newcommand{\Wikini}{{W_{ik}^{(0)}}}
\newcommand{\Wikt}{{W_{ik}^{(t)}}}

\newcommand{\Wiktp}{{W_{ik}^{(t+1)}}}
\newcommand{\Omegaini}{\Omega^{(0)}}

\newcommand{\Ifree}{\mathcal{S}_\mathrm{free}}

\title{DSCOVR: Randomized Primal-Dual Block Coordinate Algorithms\\
for Asynchronous Distributed Optimization}


\author{%
\name Lin Xiao
\email lin.xiao@microsoft.com\\
\addr  Microsoft Research AI\\
       Redmond, WA 98052, USA
\AND
\name Adams Wei Yu
\email weiyu@cs.cmu.edu\\
\addr  Machine Learning Department, Carnegie Mellon University\\
       Pittsburgh, PA 15213, USA
\AND
\name Qihang Lin
\email qihang-lin@uiowa.edu\\
\addr Tippie College of Business, The University of Iowa\\
      Iowa City, IA 52245, USA
\AND
\name Weizhu Chen
\email wzchen@microsoft.com\\
\addr Microsoft AI and Research\\
      Redmond, WA 98052, USA
\vspace{-4ex}
}

\editor{}

\begin{document}
\maketitle

\begin{abstract}
Machine learning with big data often involves large optimization models.
For distributed optimization over a cluster of machines,
frequent communication and synchronization of all model parameters 
(optimization variables) can be very costly.
A promising solution is to use parameter servers to store different subsets 
of the model parameters, and update them asynchronously at different machines 
using local datasets.
In this paper, we focus on distributed optimization of large linear models 
with convex loss functions, and propose a family of randomized primal-dual 
block coordinate algorithms that are especially suitable for asynchronous
distributed implementation with parameter servers.  
In particular, we work with the saddle-point formulation of such problems
which allows simultaneous data and model partitioning, and exploit its 
structure by doubly stochastic coordinate optimization with variance reduction 
(DSCOVR).
Compared with other first-order distributed algorithms, we show that DSCOVR 
may require less amount of overall computation and communication,
and less or no synchronization.
We discuss the implementation details of the DSCOVR algorithms, 
and present numerical experiments on an industrial distributed computing system.
\end{abstract}

\begin{keywords} 
  asynchronous distributed optimization, 
  parameter servers,
  randomized algorithms, 
  saddle-point problems, 
  primal-dual coordinate algorithms,
  empirical risk minimization 
\end{keywords}

\section{Introduction}
\label{sec:intro}

Algorithms and systems for distributed optimization are critical for solving
large-scale machine learning problems, especially when the dataset cannot fit 
into the memory or storage of a single machine.
In this paper, we consider distributed optimization problems of the form
\begin{equation}\label{eqn:erm-m}
\minimize_{w\in\R^d} ~~\frac{1}{m}\sum_{i=1}^m f_i(X_i w) + g(w),
\end{equation}
where $X_i\in\R^{N_i\times d}$ is the local data stored at the $i$th machine,
$f_i:\R^{N_i}\to\R$ is a convex cost function associated with the linear 
mapping $X_i w$, and $g(w)$ is a convex regularization function.
In addition, we assume that $g$ is separable, i.e., for some integer $n>0$, 
we can write
\begin{equation}\label{eqn:g-separable}
  g(w) = \sum_{k=1}^n g_k(w_k)\,,
\end{equation}
where $g_k:\R^{d_k}\to\R$, and $w_k\in\R^{d_k}$ for $k=1,\ldots,n$ 
are non-overlapping subvectors of $w\in\R^d$ with $\sum_{k=1}^n d_k = d$
(they form a partition of~$w$).
Many popular regularization functions in machine learning are separable,
for example, $g(w)=(\lambda/2)\|w\|_2^2$ or $g(w)=\lambda\|w\|_1$ 
for some $\lambda>0$.

An important special case of~\eqref{eqn:erm-m} is distributed empirical 
risk minimization (ERM) of linear predictors.  
Let $(x_1,y_1),\ldots, (x_N, y_N)$ be~$N$ training examples, where
each $x_j\in\R^d$ is a feature vector and $y_j\in\R$ is its label.
The ERM problem is formulated as
\begin{equation}\label{eqn:erm-N}
\minimize_{w\in\R^d}~~ \frac{1}{N}\sum_{j=1}^N \phi_j\bigl(x_j^T w\bigr)+g(w),
\end{equation}
where each $\phi_j:\R\to\R$ is a loss function measuring the mismatch between 
the linear prediction $x_j^T w$ and the label~$y_j$.
Popular loss functions in machine learning include, e.g., 
for regression, the squared loss $\phi_j(t)=(1/2)(t-y_j)^2$, 
and for classification, the logistic loss $\phi_j(t)=\log(1+\exp(-y_j t))$
where $y_j\in\{\pm 1\}$.
In the distributed optimization setting, the~$N$ examples are divided 
into~$m$ subsets, each stored on a different machine.
For $i=1,\ldots,m$, let $\mathcal{I}_i$ denote the subset of
$\bigl\{1,\ldots,N\bigr\}$ stored at machine~$i$ and let $N_i=|\mathcal{I}_i|$
(they satisfy $\sum_{i=1}^m N_i=N$).
Then the ERM problem~\eqref{eqn:erm-N} can be written in the form
of~\eqref{eqn:erm-m} by letting $X_i$ consist of $x_j^T$ 
with $j\in\mathcal{I}_i$ as its rows and defining
$f_i:\R^{N_i}\to\R$ as
\begin{equation}\label{eqn:erm-fi}
  f_i(u_{\mathcal{I}_i}) = \frac{m}{N}\sum_{j\in\mathcal{I}_i}\phi_j(u_j),
\end{equation}
where $u_{\mathcal{I}_i}\in\R^{N_i}$ is a subvector of $u\in\R^N$,
consisting of $u_j$ with $j\in\mathcal{I}_i$.


The nature of distributed algorithms and their convergence properties
largely depend on the model of the communication network that connects 
the~$m$ computing machines. 
A popular setting in the literature is to model the communication network 
as a graph, and each node can only communicate (in one step) with their
neighbors connected by an edge, either synchronously or asynchronously
\citep[e.g.,][]{BertsekasTsitsiklis89,NedicOzdaglar2009}.
The convergence rates of distributed algorithms in this setting often 
depend on characteristics of the graph, such as its diameter and the 
eigenvalues of the graph Laplacian
\citep[e.g.][]{XiaoBoyd2006jota,DuchiAgarwal2012,NedicShi2016,ScamanBach2017}.
This is often called the \emph{decentralized} setting. 

Another model for the communication network is \emph{centralized}, 
where all the machines participate synchronous, collective communication,
e.g., broadcasting a vector to all~$m$ machines, 
or computing the sum of~$m$ vectors, each from a different machine
(AllReduce).
These collective communication protocols hide the underlying implementation 
details, which often involve operations on graphs. 
They are adopted by many popular distributed computing standards and packages, 
such as MPI \citep{MPIForum}, MapReduce \citep{Dean08MapReduce} 
and Aparche Spark \citep{ApacheSpark2016},
and are widely used in machine learning practice 
\citep[e.g.,][]{LinTsaiLeeLin14,MLlib2016}.
In particular, collective communications are very useful for addressing 
\emph{data parallelism}, i.e., by allowing different machines to work 
in parallel to improve the same model $w\in\R^d$ using their local dataset.
A disadvantage of collective communications is their synchronization cost:
faster machines or machines with less computing tasks have to become idle
while waiting for other machines to finish their tasks 
in order to participate a collective communication.

One effective approach for reducing synchronization cost is to exploit 
\emph{model parallelism} (here ``model'' refers to $w\in\R^d$, 
including all optimization variables).
The idea is to allow different machines work in parallel with different 
versions of the full model or different parts of a common model, 
with little or no synchronization.
The model partitioning approach can be very effective for solving
problems with large models (large dimension~$d$).
Dedicated \emph{parameter servers} can be set up to store and maintain 
different subsets of the model parameters, 
such as the $w_k$'s in~\eqref{eqn:g-separable}, 
and be responsible for coordinating their updates at different workers
\citep{LiMu2014OSDI,Xing2015Petuum}.
This requires flexible point-to-point communication.

In this paper, we develop a family of randomized algorithms that exploit
\emph{simultaneous} data and model parallelism.
Correspondingly, we adopt a centralized communication model that support
both synchronous collective communication and 
asynchronous point-to-point communication.
In particular, it allows any pair of machines to send/receive a message 
in a single step, and multiple point-to-point communications
may happen in parallel in an event-driven, asynchronous manner. 
Such a communication model is well supported by the MPI standard.
To evaluate the performance of distributed algorithms in this setting, 
we consider the following three measures.
\begin{itemize} \itemsep 0pt
\item \emph{Computation complexity:} 
total amount of computation, measured by the number of passes over all 
datasets $X_i$ for $i=1,\ldots,m$, 
which can happen in parallel on different machines.
\item \emph{Communication complexity:}
the total amount of communication required, 
measured by the equivalent number of vectors in $\R^d$ sent or received
across all machines.
\item \emph{Synchronous communication:} 
measured by the total number of vectors in $\R^d$ that requires
synchronous collective communication involving all~$m$ machines.
We single it out from the overall communication
complexity as a (partial) measure of the synchronization cost.
\end{itemize}

In Section~\ref{sec:framework}, 
we introduce the framework of our randomized algorithms,
\textbf{D}oubly \textbf{S}tochastic \textbf{C}oordinate \textbf{O}ptimization with \textbf{V}ariance \textbf{R}eduction (DSCOVR),
and summarize our theoretical results on the three measures
achieved by DSCOVR.
Compared with other first-order methods for distributed optimization,
we show that DSCOVR may require less amount of overall computation and 
communication, and less or no synchronization.
Then we present the details of several DSCOVR variants and their convergence 
analysis in Sections~\ref{sec:dscovr-svrg}-\ref{sec:dual-free}.
We discuss the implementation of different DSCOVR algorithms
in Section~\ref{sec:implementation}, and present results of our numerical 
experiments in Section~\ref{sec:experiments}.

\section{The DSCOVR Framework and Main Results}
\label{sec:framework}

\begin{figure}[t]
  \centering
  \ifpdf
  \includegraphics[width=0.475\textwidth]{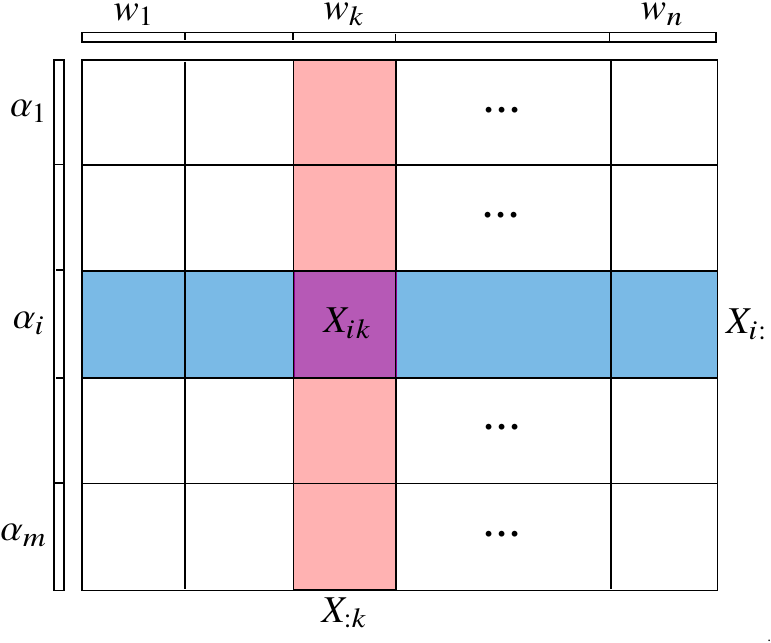}
  \else
  \input{drawings/Xpartition_psfrag}
  \includegraphics[width=0.45\textwidth]{drawings/Xpartition}
  \fi
  \caption{Partition of primal variable~$w$, dual variable~$\alpha$, 
  and the data matrix $X$.}
  \label{fig:partition}
\end{figure}

First, we derive a saddle-point formulation of the convex 
optimization problem~\eqref{eqn:erm-m}.
Let $f_i^*$ be the convex conjugate of $f_i$, i.e., 
$f_i^*(\alpha_i)=\sup_{u_i\in\R^{N_i}}\left\{\alpha_i^T u_i-f_i(u_i)\right\}$,
and define 
\begin{equation}\label{eqn:Lagrangian}
\Lagr(w,\alpha) ~\equiv~ \frac{1}{m}\sum_{i=1}^{m}\alpha_i^T X_i w
-\frac{1}{m}\sum_{i=1}^{m}f_i^*(\alpha_i)+  g(w) \,,
\end{equation}
where $\alpha=[\alpha_1;\ldots;\alpha_m]\in\R^{N}$.
Since both the $f_i$'s and $g$ are convex, $\Lagr(w,\alpha)$ 
is convex in~$w$ and concave in~$\alpha$.
We also define a pair of \emph{primal} and \emph{dual} functions:
\begin{eqnarray}
P(w) &=& \max_{\alpha\in\R^N} \Lagr(w,\alpha)
~=~ \frac{1}{m}\sum_{i=1}^m f_i(X_i w) + g(w)\,,
  \label{eqn:erm-primal} \\
D(\alpha) &=& \min_{w\in\R^d} \Lagr(w,\alpha)
~=~ -\frac{1}{m}\sum_{i=1}^m f_i^*(\alpha_i) 
    - g^*\biggl(-\frac{1}{m}\sum_{i=1}^m(X_i)^T\alpha_i\biggr)\,,
\label{eqn:erm-dual}
\end{eqnarray}
where $P(w)$ is exactly the objective function in~\eqref{eqn:erm-m}\footnote{
More technically, we need to assume that each $f_i$ is convex and lower
semi-continuous so that $f_i^{**}=f_i$ 
\citep[see, e.g.,][Section~12]{Rockafellar70}.
It automatically holds if $f_i$ is convex and differentiable, which we will
assume later.}
and $g^*$ is the convex conjugate of~$g$.
We assume that $L$ has a saddle point $(\wopt,\aopt)$, that is,
\[
\Lagr(\wopt,\alpha)\leq \Lagr(\wopt,\aopt) \leq \Lagr(w,\aopt)\,,
\qquad \forall (w,\alpha)\in\R^d\times\R^N .
\]
In this case, we have
$\wopt=\argmin~P(w)$ and $\aopt=\argmin~D(\alpha)$, and $P(\wopt)=D(\aopt)$.

The DSCOVR framework is based on solving the convex-concave saddle-point 
problem
\begin{equation}\label{eqn:min-max-saddle}
\min_{w\in\R^d}~\max_{\alpha\in\R^N}~  \Lagr(w,\alpha) .
\end{equation}
Since we assume that~$g$ has a separable structure as 
in~\eqref{eqn:g-separable}, we rewrite the saddle-point problem as
\begin{equation}\label{eqn:separable-saddle}
\min_{w\in\R^d}~\max_{\alpha\in\R^N}~ \biggl\{
\frac{1}{m}\sum_{i=1}^{m}\sum_{k=1}^n
\alpha_i^T \Xik w_k
-\frac{1}{m}\sum_{i=1}^{m}f_i^*(\alpha_i)+  \sum_{k=1}^n g_k(w_k) \biggr\}\,,
\end{equation}
where $\Xik\in\R^{N_i\times d_k}$ for $k=1,\ldots,n$ are column partitions 
of~$X_i$.
For convenience, we define the following notations.
First, let $X=[X_1;\ldots;X_m]\in\R^{N\times d}$ be the overall data matrix,
by stacking the $X_i$'s vertically. 
Conforming to the separation of~$g$,
we also partition~$X$ into block columns $\Xck\in\R^{N\times d_k}$
for $k=1,\ldots,n$, where each $\Xck=[X_{1k};\ldots;X_{mk}]$ 
(stacked vertically).
For consistency, we also use $\Xri$ to denote $X_i$ from now on. 
See Figure~\ref{fig:partition} for an illustration.

\begin{algorithm}[t]
\caption{DSCOVR framework}
\label{alg:dscovr}
\begin{algorithmic}[1]
\REQUIRE initial points $\wini,\aini$, and step sizes $\sigma_i$ 
	for $i=1,\ldots,m$ and $\tau_k$ for $k=1,\ldots,n$.
\FOR {$t=0,1,2,\ldots,$}
\STATE pick $j\in\{1,\ldots,m\}$ and $l\in\{1,\ldots,n\}$ 
randomly with distributions $p$ and $q$ respectively.\\
\STATE compute \emph{variance-reduced} stochastic gradients $\ujtp$ and $\vltp$.
\STATE update primal and dual block coordinates:
\vspace{-1ex}
\begin{eqnarray}
\aitp &=& \left\{	\begin{array}{ll}
    \prox_{\sigma_j f_j^*}\bigl(\ajt+\sigma_j\ujtp\bigr) &\textrm{if}~i=j,\\
    \ait,  &\textrm{if}~i\neq j, \end{array} \right.
\label{eqn:ai-update} \\
\wktp &=& \left\{ \begin{array}{ll}
    \prox_{\tau_l g_l}\bigl(\wlt-\tau_l\vltp\bigr) & \text{if}~ k=l,\\
    \wkt, & \textrm{if}~k\neq l. \end{array} \right. 
\label{eqn:wk-update} 
\end{eqnarray}
\vspace{-1em}
\ENDFOR
\end{algorithmic}
\end{algorithm}

We exploit the doubly separable structure in~\eqref{eqn:separable-saddle} 
by a doubly stochastic coordinate update algorithm outlined in
Algorithm~\ref{alg:dscovr}.
Let $p=\{p_1,\ldots,p_m\}$ and $q=\{q_1,\ldots,q_n\}$ be two probability 
distributions.
During each iteration~$t$,
we randomly pick an index $j\in\{1,\ldots,m\}$ with probability~$p_j$, and 
independently pick an index $l\in\{1,\ldots,n\}$ with probability~$q_l$.
Then we compute two vectors $\ujtp\in\R^{N_j}$ and $\vltp\in\R^{d_l}$
(details to be discussed later), and use them to update the block coordinates 
$\alpha_j$ and $w_l$ while leaving other block coordinates unchanged.
The update formulas in~\eqref{eqn:ai-update} and~\eqref{eqn:wk-update}
use the proximal mappings of the (scaled) functions $f_j^*$ and $g_l$
respectively.
We recall that the proximal mapping for any convex function 
$\phi:\R^d\to\R\cup\{\infty\}$ is defined as
\[
\prox_{\phi}(v) \eqdef \argmin_{u\in\R^{d}}
\left\{\phi(u) + \frac{1}{2}\|u-v\|^2\right\}.
\]

There are several different ways to compute the vectors $\ujtp$ and $\vltp$
in Step~3 of Algorithm~\ref{alg:dscovr}.
They should be the partial gradients or stochastic gradients of the bilinear
coupling term in~$L(w,\alpha)$ with respect to $\alpha_j$ and~$w_l$
respectively.
Let
\[
  K(w,\alpha) = \alpha^T X w = \sum_{i=1}^{m}\sum_{k=1}^n \alpha_i^T \Xik w_k ,
\]
which is the bilinear term in $L(w,\alpha)$ without the factor $1/m$.
We can use the following partial gradients in Step~3:
\begin{equation}
\begin{array}{l}
\displaystyle \ubjtp = \frac{\partial K(\wt,\at)}{\partial \alpha_j}
= \sum_{k=1}^n \Xjk\wkt, \\[2ex]
\displaystyle \vbltp  = \frac{1}{m} \frac{\partial K(\wt,\at)}{\partial w_l}
= \frac{1}{m}\sum_{i=1}^m (\Xil)^T\ait . 
\end{array}
\label{eqn:partial-grad-aj-wl}
\end{equation}
We note that the factor $1/m$ does not appear in the first equation
because it multiplies both $K(w,\alpha)$ and $f_j^*(\alpha_j)$ 
in~\eqref{eqn:separable-saddle} 
and hence does not appear in updating~$\alpha_j$.
Another choice is to use 
\begin{equation}
\begin{array}{l}
  \displaystyle \ujtp = \frac{1}{q_l}\Xjl\wlt, \\[2ex]
\displaystyle \vltp = \frac{1}{p_j}\frac{1}{m}(\Xjl)^T\ajt, 
\end{array}
\label{eqn:stoch-grad-aj-wl}
\end{equation}
which are unbiased stochastic partial gradients, because
\begin{eqnarray*}
&& \E_l\bigl[\ujtp\bigr] 
= \sum_{k=1}^n q_k\frac{1}{q_k} \Xjk\wkt = \sum_{k=1}^n \Xjk\wkt
= \ubjtp, \\
&& \E_j\bigl[\vltp\bigr]  
= \sum_{i=1}^m p_i\frac{1}{p_i}\frac{1}{m}(\Xil)^T\ait 
= \frac{1}{m}\sum_{i=1}^m (\Xil)^T\ait 
= \vbltp,
\end{eqnarray*}
where $\E_j$ and $\E_l$ are expectations with respect to the random 
indices~$j$ and~$l$ respectively.

It can be shown that, Algorithm~\ref{alg:dscovr} converges to a saddle point
of $L(w,\alpha)$ with either choice~\eqref{eqn:partial-grad-aj-wl}
or~\eqref{eqn:stoch-grad-aj-wl} in Step~3,
and with suitable \emph{step sizes} $\sigma_i$ and $\tau_k$.
It is expected that using the stochastic gradients 
in~\eqref{eqn:stoch-grad-aj-wl} leads to a slower convergence rate
than applying~\eqref{eqn:partial-grad-aj-wl}.
However, using~\eqref{eqn:stoch-grad-aj-wl} has the advantage of much less 
computation during each iteration.
Specifically, it employs only one block matrix-vector multiplication for 
both updates, instead of~$n$ and~$m$ block multiplications done
in~\eqref{eqn:partial-grad-aj-wl}.

More importantly, the choice in~\eqref{eqn:stoch-grad-aj-wl} is suitable
for parallel and distributed computing.
To see this, 
let $(\jt,\lt)$ denote the pair of random indices drawn at iteration~$t$
(we omit the superscript~$(t)$ to simplify notation 
whenever there is no confusion from the context).
Suppose for a sequence of consecutive iterations $t,\ldots,t+s$,
there is no common index among $\jt,\ldots,\jtps$,
nor among $\lt,\ldots,\ltps$,
then these $s+1$ iterations can be done in parallel and they produce 
the same updates as being done sequentially.
Suppose there are $s+1$ processors or machines, then each can
carry out one iteration, which includes the updates 
in~\eqref{eqn:stoch-grad-aj-wl} as well as~\eqref{eqn:ai-update}
and~\eqref{eqn:wk-update}.
These $s+1$ iterations are independent of each other, and in fact can be done
in any order, because each only involve one primal block $w_{\lt}$ and 
one dual block $\alpha_{\jt}$, for both input and output
(variables on the right and left sides of the assignments respectively).
In contrast, the input for the updates in~\eqref{eqn:partial-grad-aj-wl}
depend on all primal and dual blocks at the previous iteration,
thus cannot be done in parallel.

\begin{figure}[t]
  \centering
  \ifpdf
  \includegraphics[width=0.95\textwidth]{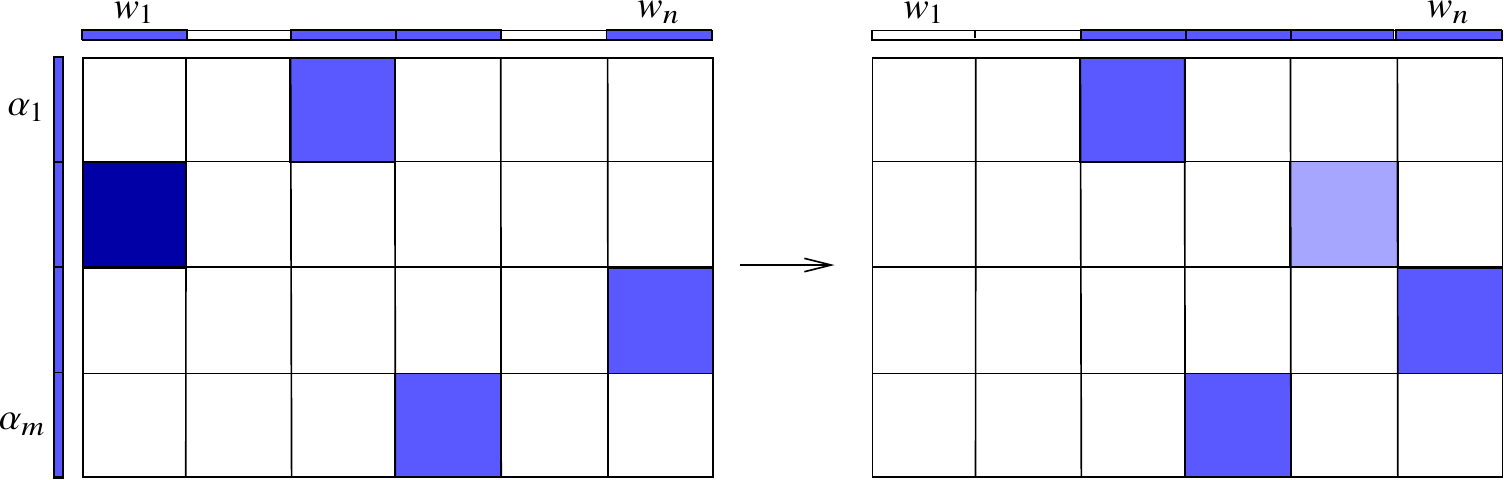}
  \else
  \input{drawings/Xpartition_psfrag}
  \includegraphics[width=0.9\textwidth]{drawings/separation1step}
  \fi
  \caption{Simultaneous data and model parallelism.
  At any given time, each machine is busy updating one parameter block
  and its own dual variable. Whenever some machine is done,
  it is assigned to work on a random block that is not being updated.}
  \label{fig:simul-parallel}
\end{figure}

\label{page:impl-sketch}
In practice, suppose we have $m$ machines for solving 
problem~\eqref{eqn:separable-saddle}, and each holds the data matrix
$\Xri$ in memory and maintains the dual block $\alpha_i$, for $i=1,\ldots,m$. 
We assume that the number of model partitions~$n$ is larger than~$m$, 
and the~$n$ model blocks $\{w_1,\ldots,w_n\}$ are stored at one or more
parameter servers.
In the beginning, we can randomly pick $m$ model blocks 
(sampling without replacement) from $\{w_1,\ldots,w_n\}$, and assign
each machine to update one of them.
If machine~$i$ is assigned to update block $k$, then both $\alpha_i$
and $w_k$ are updated, using only the matrix~$\Xik$;
moreover, it needs to communicate only the block~$w_k$ with the parameter
server that are responsible to maintain it.
Whenever one machine finishes its update, a scheduler can randomly pick
another parameter block that is not currently updated by other machines, 
and assign it to the free machine.
Therefore all machines can work in parallel, in an asynchronous, event-driven
manner. 
Here an event is the completion of a block update at any machine,
as illustrated in Figure~\ref{fig:simul-parallel}.
We will discuss the implementation details in
Section~\ref{sec:implementation}.

The idea of using doubly stochastic updates for distributed optimization 
in not new. It has been studied by \citet{NOMAD2014} for solving the 
matrix completion problem, and by \citet{DSO2014} for solving the saddle-point
formulation of the ERM problem.
Despite their nice features for parallelization, these algorithms inherit 
the $O(1/\sqrt{t})$ (or $O(1/t)$ with strong convexity) sublinear
convergence rate of the classical stochastic gradient method.
They translate into high communication and computation cost for
distributed optimization.
In this paper, we propose new variants of doubly stochastic update algorithms
by using \emph{variance-reduced} stochastic gradients 
(Step~3 of Algorithm~\ref{alg:dscovr}).
More specifically, we borrow the variance-reduction techniques from
SVRG \citep{JohnsonZhang13} and SAGA \citep{NIPS2014SAGA}
to develop the DSCOVR algorithms, which enjoy fast linear rates of convergence.
In the rest of this section, we summarize our theoretical results 
characterizing the three measures for DSCOVR:
computation complexity, communication complexity, and synchronization cost.
We compare them with distributed implementation of 
batch first-order algorithms.

\subsection{Summary of Main Results}
\label{sec:main-results}

Throughout this paper, we use $\|\cdot\|$ to denote the standard Euclidean
norm for vectors. 
For matrices, $\|\cdot\|$ denotes the operator (spectral) norm 
and $\|\cdot\|_F$ denotes the Frobenius norm.
We make the following assumption regarding the optimization 
problem~\eqref{eqn:erm-m}.
\begin{assumption}\label{asmp:convexity}
Each $f_i$ is convex and differentiable, and its gradient is 
$(1/\gamma_i)$-Lipschitz continuous, i.e., 
\begin{equation}\label{eqn:fi-smooth}
  \|\nabla f_i(u) -\nabla f_i(v)\| \leq \frac{1}{\gamma_i}\|u-v\|,
    \quad \forall\, u,v \in\R^{N_i}, \quad i=1,\ldots,m.
  \end{equation}
In addition, the regularization function~$g$ is $\lambda$-strongly convex, 
i.e., 
\[
    g(w') \geq g(w) + \xi^T(w'-w) + \frac{\lambda}{2}\|w'-w\|^2,
    \quad \forall\,\xi\in\partial g(w), \quad w', w\in\R^d.
\]
\end{assumption}
Under Assumption~\ref{asmp:convexity}, each $f_i^*$
is $\gamma_i$-strongly convex \citep[see, e.g.,][Theorem~4.2.2]{HUL01book}, 
and $L(w,\alpha)$~defined in~\eqref{eqn:Lagrangian} 
has a unique saddle point $(\wopt,\aopt)$.

The condition~\eqref{eqn:fi-smooth} is often referred to as 
$f_i$ being $1/\gamma_i$-smooth.
To simplify discussion, here we assume $\gamma_i=\gamma$ for $i=1,\ldots,m$.
Under these assumptions, each composite function $f_i(X_i w)$ has a 
smoothness parameter $\|X_i\|^2/\gamma$
(upper bound on the largest eigenvalue of its Hessian).
Their average $(1/m)\sum_{i=1}^m f_i(X_i w)$ has a smooth parameter 
$\|X\|^2/(m\gamma)$, which no larger than the average of the 
individual smooth parameters $(1/m)\sum_{i=1}^m \|X_i\|^2/\gamma$.
We define a condition number for problem~\eqref{eqn:erm-m} as the ratio
between this smooth parameter and the convexity parameter~$\lambda$ of~$g$:
\begin{equation}\label{eqn:cond-bat}
\condbat = \frac{\|X\|^2}{m\lambda\gamma} 
\leq \frac{1}{m}\sum_{i=1}^m \frac{\|\Xri\|^2}{\lambda\gamma} 
\leq \frac{\|X\|_{\max}^2}{\lambda\gamma},
\end{equation}
where $\|X\|_{\max} = \max_i\{\|\Xri\|\}$.
This condition number is a key factor to characterize the iteration complexity 
of \emph{batch} first-order methods for solving problem~\eqref{eqn:erm-m},
i.e., minimizing $P(w)$.
Specifically, to find a~$w$ such that $P(w)-P(\wopt)\leq\epsilon$,
the proximal gradient method requires
$O\left((1+\condbat)\log(1/\epsilon) \right)$ iterations, 
and their accelerated variants require
$O\left(\bigl(1+\sqrt{\condbat}\bigr)\log(1/\epsilon) \right)$ iterations
\citep[e.g.,][]{Nesterov04book,BeckTeboulle09,Nesterov13composite}.
Primal-dual first order methods for solving the saddle-point 
problem~\eqref{eqn:min-max-saddle} share the same complexity
\citep{ChambollePock2011,ChambollePock2015ergodic}.

A fundamental baseline for evaluating any distributed optimization algorithms
is the distributed implementation of batch first-order methods.
Let's consider solving problem~\eqref{eqn:erm-m} using the proximal
gradient method.
During every iteration~$t$, each machine receives a copy of $\wt\in\R^d$ 
from a master machine (through Broadcast), and 
computes the local gradient $\zit=X_i^T \nabla f_i(X_i\wt)\in\R^d$.
Then a collective communication is invoked to compute 
the batch gradient $\zt =(1/m)\sum_{i=1}^m \zit$ at the master (Reduce).
The master then takes a proximal gradient step,
using $\zt$ and the proximal mapping of~$g$,
to compute the next iterate~$\wtp$ and broadcast it to every machine for the
next iteration.
We can also use the AllReduce operation in MPI to obtain $\zt$ at each
machine without a master.
In either case, the total number of passes over the data is twice the
number of iterations 
(due to matrix-vector multiplications using both $X_i$ and $X_i^T$), 
and the number of vectors in $\R^d$ sent/received across all machines 
is~$2m$ times the number of iterations (see Table~\ref{tab:comm-comp}).
Moreover, all communications are collective and synchronous.

\begin{table}[t]
\renewcommand{\arraystretch}{1.5}
\setlength{\tabcolsep}{3pt}
  \centering
  \begin{tabular}{|c|c|c|}
   \hline
   Algorithms & Computation complexity & Communication complexity \\[-1ex]
   & (number of passes over data) & (number of vectors in $\R^d$) \\
   \hline 
   batch first-order methods 
   & $(1+\condbat)\log(1/\epsilon)$ 
   & $m(1+\condbat)\log(1/\epsilon)$  \\
   DSCOVR 
   & $\left(1+\condrand/m\right)\log(1/\epsilon)$ 
   & $(m+\condrand)\log(1/\epsilon)$ \\ 
   \hline
   accelerated batch first-order methods
   & $\left(1+\sqrt{\condbat}\right)\log(1/\epsilon)$ 
   & $m\left(1+\sqrt{\condbat}\right)\log(1/\epsilon)$ \\
   accelerated DSCOVR 
   & $\bigl(1+\sqrt{\condrand/m}\bigr)\log(1/\epsilon)$ 
   & $\left(m+\sqrt{m\!\cdot\!\condrand}\right)\log(1/\epsilon)$ \\
   \hline
  \end{tabular}
  \caption{Computation and communication complexities of
  batch first-order methods and DSCOVR (for both SVRG and SAGA variants).
  We omit the $O(\cdot)$ notation in all entries and an extra 
  $\log(1+\condrand/m)$ factor for accelerated DSCOVR algorithms.} 
  \label{tab:comm-comp}
\vspace{-1ex}
\end{table}

Since DSCOVR is a family of randomized algorithms for solving
the saddle-point problem~\eqref{eqn:min-max-saddle}, 
we would like to find $(w,\alpha)$ such that 
$\|\wt-\wopt\|^2+(1/m)\|\at-\aopt\|^2\leq\epsilon$ holds in expectation
and with high probability.
We list the communication and computation complexities of DSCOVR 
in Table~\ref{tab:comm-comp}, comparing them with 
batch first-order methods.
Similar guarantees also hold for reducing the duality gap
$P(\wt)-D(\at)$, where~$P$ and~$D$ are defined 
in~\eqref{eqn:erm-primal} and~\eqref{eqn:erm-dual} respectively.

The key quantity characterizing the complexities of DSCOVR is the condition 
number $\condrand$, which can be defined in several different ways.
If we pick the data block~$i$ and model block~$k$ with uniform distribution,
i.e., $p_i=1/m$ for $i=1,\ldots,m$ and $q_k=1/n$ for $k=1,\ldots,n$, then 
\begin{equation}\label{eqn:cond-rand-mn}
\condrand = \frac{n\|X\|^2_{m\times n}}{\lambda\gamma},
\qquad \mbox{where}\qquad \|X\|_{m\times n}=\max_{i,k}\|\Xik\|. 
\end{equation}
Comparing the definition of $\condbat$ in~\eqref{eqn:cond-bat}, we have
$\condbat\leq\condrand$ because
\[
\frac{1}{m}\|X\|^2 \leq \frac{1}{m}\sum_{i=1}^m \|X_i\|^2
\leq \frac{1}{m}\sum_{i=1}^m \sum_{k=1}^n \|\Xik\|^2 
\leq n\|X\|_{m\times n}^2 .
\]
With $\Xri= [X_{i1} \cdots X_{im}] \in\R^{N_i\times d}$ and
$\Xck= [X_{1k}; \ldots; X_{mk}] \in\R^{N\times d_k}$, we can also define
\begin{equation}\label{eqn:cond-rand-maxF}
\condrand' = \frac{\|X\|_{\max,F}^2}{\lambda\gamma}, 
\qquad \mbox{where}\qquad
\|X\|_{\max,F} = \max_{i,k}\bigl\{\|\Xri\|_F,\, \|\Xck\|_F\bigr\}.
\end{equation}
In this case, we also have $\condbat\leq\condrand'$ because 
$\|X\|_{\max}\leq\|X\|_{\max,F}$.
Finally, if we pick the pair $(i,k)$ with non-uniform distribution
$p_i=\|\Xri\|_F^2/\|X\|_F^2$ and $q_k=\|\Xck\|_F^2/\|X\|_F^2$, 
then we can define
\begin{equation}\label{eqn:cond-rand-Fro}
\condrand'' = \frac{\|X\|_F^2}{m\lambda\gamma}.
\end{equation}
Again we have $\condbat\leq\condrand''$ because $\|X\|\leq\|X\|_F$.
We may replace $\condrand$ in Tables~\ref{tab:comm-comp} 
and~\ref{tab:sync-async} by
either $\condrand'$ or $\condrand''$, depending on the 
probability distributions~$p$ and~$q$ and different proof techniques.

From Table~\ref{tab:comm-comp}, we observe similar type of speed-ups in
computation complexity, as obtained by variance reduction techniques 
over the batch first-order algorithms for convex optimization 
\citep[e.g.,][]{LeRouxSchmidtBach12,JohnsonZhang13, NIPS2014SAGA, 
XiaoZhang14ProxSVRG,LanZhou2015RPDG,Katyusha2017},
as well as for convex-concave saddle-point problems
\citep{ZhangXiao2017SPDC,BalamuruganBach2016}.
Basically, DSCOVR algorithms have potential improvement over batch first-order
methods by a factor of~$m$ (for non-accelerated algorithms)
or~$\sqrt{m}$ (for accelerated algorithms), but with a worse condition number.
In the worst case, the ratio between $\condrand$ and $\condbat$ 
may be of order~$m$ or larger, thus canceling the potential improvements.

\begin{table}[t]
\renewcommand{\arraystretch}{1.3}
\setlength{\tabcolsep}{3pt}
  \centering
  \begin{tabular}{|c|c|c|}
   \hline
   Algorithms & Synchronous Communication & Asynchronous Communication \\[-1ex]
   & (number of vectors in $\R^d$) & (equiv.\ number of vectors in $\R^d$)\\
   \hline 
   DSCOVR-SVRG
   & $m\log(1/\epsilon)$ 
   & $\condrand\log(1/\epsilon)$ \\
   DSCOVR-SAGA
   & $m$ 
   & $\left(m+\condrand\right)\log(1/\epsilon)$ \\
   \hline
   accelerated DSCOVR-SVRG
   & $m\log(1/\epsilon)$ 
   & $\left(1+\sqrt{m\!\cdot\!\condrand}\right)\log(1/\epsilon)$ \\
   accelerated DSCOVR-SAGA 
   & $m$
   & $\left(1+\sqrt{m\!\cdot\!\condrand}\right)\log(1/\epsilon)$ \\
   \hline
  \end{tabular}
  \caption{Breakdown of communication complexities into synchronous and 
    asynchronous communications for two 
    different types of DSCOVR algorithms.
  We omit the $O(\cdot)$ notation and an extra $\log(1+\condrand/m)$ factor
for accelerated DSCOVR algorithms.}
  \label{tab:sync-async}
\end{table}

More interestingly, DSCOVR also has similar improvements 
in terms of communication complexity over batch first-order methods.
In Table~\ref{tab:sync-async}, we decompose the communication complexity
of DSCOVR into synchronous and asynchronous communication.
The decomposition turns out to be different depending on the variance reduction
techniques employed: SVRG \citep{JohnsonZhang13} 
versus SAGA \citep{NIPS2014SAGA}.
We note that DSCOVR-SAGA essentially requires only
asynchronous communication, because the synchronous communication of~$m$
vectors are only necessary for initialization with non-zero starting point.

The comparisons in Table~\ref{tab:comm-comp} and~\ref{tab:sync-async}  
give us good understanding of the complexities of different algorithms.
However, these complexities are not accurate measures of their performance
in practice.
For example, collective communication of $m$ vectors in $\R^d$ 
can often be done in parallel over a spanning tree of the underlying 
communication network, thus only cost $\log(m)$ times  (insted of $m$ times)
compared with sending only one vector.
Also, for point-to-point communication, sending one vector in $\R^d$ altogether
can be much faster than sending~$n$ smaller vectors of total length~$d$ 
separately. 
A fair comparison in term of wall-clock time on a real-world distributed 
computing system
requires customized, efficient implementation of different algorithms.
We will shed some light on timing comparisons with 
numerical experiments in Section~\ref{sec:experiments}.

\subsection{Related Work}

There is an extensive literature on distributed optimization.
Many algorithms developed for machine learning adopt the centralized 
communication setting, due to the wide availability of supporting standards 
and platforms such as MPI, MapReduce and Spark 
(as discussed in the introduction).
They include parallel implementations of the batch first-order and 
second-order methods 
\citep[e.g.,][]{LinTsaiLeeLin14,ChenWangZhou2014,LeeWangChenLin2017},
ADMM \citep{Boyd2011ADMM}, 
and distributed dual coordinate ascent 
\citep{Yang2013DistributedSDCA,CoCoA2014NIPS,CoCoA2015ICML}.

For minimizing the average function $(1/m)\sum_{i=1}^m f_i(w)$, 
in the centralized setting and with only first-order oracles 
(i.e., gradients of $f_i$'s or their conjugates),
it has been shown that distributed implementation of accelerated gradient 
methods achieves the optimal convergence rate and communication complexity 
\citep{ArjevaniShamir2015NIPS,ScamanBach2017}.
The problem~\eqref{eqn:erm-m} we consider has the extra
structure of composition with a linear transformation by the local data, 
which allows us to exploit simultaneous data and model parallelism using
randomized algorithms and obtain improved communication and computation 
complexity.

Most work on asynchronous distributed algorithms exploit model parallelism
in order to reduce the synchronization cost, especially in the setting with 
parameter servers 
\citep[e.g.,][]{LiMu2014OSDI,Xing2015Petuum,AytekinJohansson2016}.
Besides, delay caused by the asynchrony can be incorporated to the step size to gain practical improvement on convergence
\citep[e.g.,][]{agDuc11,McMahanS14,SraYLS16}, 
though the theoretical sublinear rates remain.  
There are also many recent work on asynchronous parallel stochastic gradient
and coordinate-descent algorithms for convex optimization 
\citep[e.g.,][]{recht2011hogwild,LiuWrightRe14icml,EXTRA2015,
Reddi2015AsyncVR,RichtarikTakac12bigdata,ARock2016}.
When the workloads or computing power of different machines or processors
are nonuniform, they may significantly increase 
\emph{iteration efficiency} (number of iterations done in unit time),
but often at the cost of requiring more iterations than their 
synchronous counterparts (due to delays and stale updates). 
So there is a subtle balance between iteration efficiency
and iteration complexity \citep[e.g.,][]{HannahYin2017}.
Our discussions in Section~\ref{sec:main-results} show that DSCOVR is 
capable of improving both aspects.

For solving bilinear saddle-point problems with a finite-sum structure, 
\cite{ZhangXiao2017SPDC} proposed a randomized algorithm that 
works with dual coordinate update but full primal update.
\citet{YuLinYang15} proposed a doubly stochastic algorithm that works with
both primal and dual coordinate updates based on 
equation~\eqref{eqn:partial-grad-aj-wl}. 
Both of them achieved accelerated linear convergence rates, 
but neither can be readily applied to distributed computing.
In addition,
\citet{BalamuruganBach2016} proposed stochastic variance-reduction methods
(also based on SVRG and SAGA)
for solving more general convex-concave saddle point problems.
For the special case with bilinear coupling, they obtained 
similar
computation complexity as DSCOVR.
However, their methods require full model updates at each iteration 
(even though working with only one sub-block of data),
thus are not suitable for distributed computing.

With additional assumptions and structure, such as similarity between the local
cost functions at different machines or using second-order information, 
it is possible to obtain better communication complexity for distributed 
optimization; see, e.g., 
\citet{ShamirSrebroZhang14DANE,ZhangXiao2015DiSCO,AIDE2016}.
However, these algorithms rely on much more computation at each machine
for solving a local sub-problem at each iteration.
With additional memory and preprocessing at each machine, 
\citet{LeeLinMaYang2015} showed that SVRG can be adapted
for distributed optimization to obtain low communication complexity.

\section{The DSCOVR-SVRG Algorithm}
\label{sec:dscovr-svrg}

From this section to Section~\ref{sec:dual-free},
we present several realizations of DSCOVR using different variance
reduction techniques and acceleration schemes, and analyze their
convergence properties.
These algorithms are presented and analyzed as sequential randomized algorithms.
We will discuss how to implement them for asynchronous distributed 
computing in Section~\ref{sec:implementation}. 

\begin{algorithm}[t]
\caption{DSCOVR-SVRG}
\label{alg:dscovr-svrg}
\begin{algorithmic}[1]
\REQUIRE initial points $\wbini$, $\abini$, number of stages $S$ 
    and number of iterations per stage $M$.
\FOR {$s=0,1,2,\ldots,S-1$}
\vspace{0.3em}
\STATE $\ubs=X\wbs$ and $\vbs=\frac{1}{m}X^T\abs$\\[0.3em]
\STATE $\wini=\wbs$ and $\aini=\abs$
\FOR {$t=0,1,2,\ldots,M-1$}
\vspace{0.3em}
\STATE pick $j\in\{1,\ldots,m\}$ and $l\in\{1,\ldots,n\}$ 
randomly with distributions $p$ and $q$ respectively.\\[0.5em]
\STATE compute variance-reduced stochastic gradients:
\vspace{-0.5ex}
\begin{eqnarray}
\ujtp &=& \ubjs+\frac{1}{q_l}\Xjl\bigl(\wlt-\wbls\bigr), 
	\label{eqn:vr-sg-u}\\
\vltp &=& \vbls+\frac{1}{p_j}\frac{1}{m}(\Xjl)^T\bigl(\ajt-\abjs\bigr).
	\label{eqn:vr-sg-v}
\end{eqnarray}
\vspace{-1em}
\STATE update primal and dual block coordinates:
\vspace{-1ex}
\begin{eqnarray*}
\aitp &=& \left\{	\begin{array}{ll}
    \prox_{\sigma_j f_j^*}\bigl(\ajt+\sigma_j\ujtp\bigr) &\textrm{if}~i=j,\\
    \ait,  &\textrm{if}~i\neq j, \end{array} \right. \\
\wktp &=& \left\{ \begin{array}{ll}
    \prox_{\tau_l g_l}\bigl(\wlt-\tau_l\vltp\bigr) & \text{if}~ k=l,\\
    \wkt, & \textrm{if}~k\neq l. \end{array} \right. 
\end{eqnarray*}
\vspace{-1em}
\ENDFOR \\[0.5em]
\STATE $\wbsp=\wM$ and $\absp=\aM$.
\ENDFOR \\[0.5em]
\ENSURE $\wbS$ and $\abS$.
\end{algorithmic}
\end{algorithm}

Algorithm~\ref{alg:dscovr-svrg} is a DSCOVR algorithm that uses the 
technique of SVRG \citep{JohnsonZhang13} for variance reduction.
The iterations are divided into stages and each stage has a inner loop.
Each stage is initialized by a pair of vectors $\wbs\in\R^d$ and $\abs\in\R^N$, 
which come from either initialization (if $s=0$) or the last iterate of the
previous stage (if $s>0$).
At the beginning of each stage, we compute the batch gradients
\[
\ubs = \frac{\partial}{\partial \abs} \left((\abs)^T X \wbs\right)
	= X\wbs, \qquad
\vbs = \frac{\partial}{\partial \wbs} \left(\frac{1}{m}(\abs)^T X \wbs \right)
= \frac{1}{m} X^T \abs .
\]
The vectors $\ubs$ and $\vbs$ share the same partitions as $\at$ and $\wt$,
respectively.
Inside each stage~$s$, the variance-reduced stochastic gradients are computed
in~\eqref{eqn:vr-sg-u} and~\eqref{eqn:vr-sg-v}.
It is easy to check that they are unbiased.
More specifically, 
taking expectation of $\ujtp$ with respect to the random index~$l$ gives
\[
\E_l\bigl[\ujtp\bigr] 
= \ubjs + \sum_{k=1}^n q_k\frac{1}{q_k} \Xjk\bigl(\wkt -\wbks\bigr)
= \ubjs + \Xrj\wt -\Xrj\wbs = \Xrj\wt,
\]
and taking expectation of $\vltp$ with respect to the random index~$j$ gives
\[
\E_j\bigl[\vltp\bigr]  
= \vbls +\sum_{i=1}^m p_i\frac{1}{p_i}\frac{1}{m}(\Xil)^T\bigl(\ait-\abis\bigr)
= \vbls + \frac{1}{m}(\Xcl)^T\left(\at  - \abs \right)
= \frac{1}{m}(\Xcl)^T\at.
\]

In order to measure the distance of any pair of primal and dual variables to
the saddle point, we define a weighted squared Euclidean norm on $\R^{d+N}$.
Specifically, 
for any pair $(w,\alpha)$ where $w\in\R^d$ and 
$\alpha=[\alpha_1,\ldots,\alpha_m]\in\R^N$ with $\alpha_i\in\R^{N_i}$,
we define 
\begin{equation}\label{eqn:Omega}
\Omega(w,\alpha)=\lambda\|w\|^2+\frac{1}{m}\sum_{i=1}^m \gamma_i\|\alpha_i\|^2.
\end{equation}
If $\gamma_i=\gamma$ for all $i=1,\ldots,m$, then 
$\Omega(w,\alpha)=\lambda\|w\|^2+\frac{\gamma}{m}\|\alpha\|^2$.
We have the following theorem concerning the convergence rate of
Algorithm~\ref{alg:dscovr-svrg}.

\begin{theorem} \label{thm:dscovr-svrg}
Suppose Assumption~\ref{asmp:convexity} holds, and let $(\wopt,\aopt)$
be the unique saddle point of $L(w,\alpha)$.
Let $\Gamma$ be a constant that satisfies
\begin{equation}\label{eqn:Gamma-svrg}
\Gamma ~\geq~ \max_{i,k}\left\{ 
\frac{1}{p_i}\left(1+\frac{9\|\Xik\|^2}{2 q_k \lambda\gamma_i}\right) ,\;
\frac{1}{q_k}\left(1+\frac{9n\|\Xik\|^2}{2m p_i\lambda\gamma_i}\right)\right\}.
\end{equation}
In Algorithm~\ref{alg:dscovr-svrg}, if we choose the step sizes as
\begin{eqnarray}
  \sigma_i &=& \frac{1}{2\gamma_i(p_i\Gamma-1)},  \qquad i=1,\ldots,m, 
	\label{eqn:sigma-i-svrg} \\
  \tau_k &=& \frac{1}{2\lambda(q_k\Gamma-1)},  \qquad k=1,\ldots,n,
	\label{eqn:tau-k-svrg}
\end{eqnarray}
and the number of iterations during each stage satisfies
$M\geq\log(3)\Gamma$,
then for any $s>0$,
\begin{equation}\label{eqn:svrg-stage-converge}
  \E\left[\Omega\bigl(\wbs-\wopt,\abs-\aopt\bigr)\right]
  \leq \left(\frac{2}{3}\right)^s 
  \Omega\bigl(\wbini-\wopt,\abini-\aopt\bigr).
\end{equation}
\end{theorem}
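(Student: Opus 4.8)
The plan is to follow the template for variance-reduced primal–dual block-coordinate methods: establish a one-iteration expected contraction inside a stage, control the variance of the SVRG gradients by the distance to the stage anchor $(\wbs,\abs)$, and then unroll over the $M$ inner iterations and over the stages. First I would analyze a single inner iteration $t\to t+1$. Since $\aitp$ and $\wktp$ are defined by proximal steps on the $\gamma_j$-strongly convex $\sigma_j f_j^*$ and the $\lambda$-strongly convex $\tau_l g_l$ (note that the stated step sizes are positive precisely because $\Gamma$ in~\eqref{eqn:Gamma-svrg} forces $p_i\Gamma>1$ and $q_k\Gamma>1$), I would write the strengthened three-point prox inequalities evaluated at the saddle point $(\wopt,\aopt)$. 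For the updated blocks these read, schematically, $f_j^*(\ajtp)-f_j^*(\aiopt)+(\aiopt-\ajtp)^T\ujtp+\bigl(\tfrac{1}{2\sigma_j}+\tfrac{\gamma_j}{2}\bigr)\|\aiopt-\ajtp\|^2 \le \tfrac{1}{2\sigma_j}\|\aiopt-\ajt\|^2$, and symmetrically for $\wltp$ with $\vltp$, $\tau_l$, $\lambda$, $g_l$; the cross terms $(\aiopt-\ajtp)^T\ujtp$ and $(\wkopt-\wltp)^T\vltp$ carry the bilinear coupling $\alpha^T X w$.

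Next I would take the conditional expectation over the independent pair $(j,l)$. Because only one dual block and one primal block move, the expected change of $\|\alpha_i-\aiopt\|^2$ is $p_i$ times its update and that of $\|w_k-\wkopt\|^2$ is $q_k$ times its update; this motivates working with the weighted potential $V^{(t)}=\sum_k \tfrac{1}{q_k}\bigl(\tfrac{1}{2\tau_k}+\lambda\bigr)\|\wkt-\wkopt\|^2+\tfrac{1}{m}\sum_i \tfrac{1}{p_i}\bigl(\tfrac{1}{2\sigma_i}+\gamma_i\bigr)\|\ait-\aiopt\|^2$, which under~\eqref{eqn:sigma-i-svrg}–\eqref{eqn:tau-k-svrg} collapses exactly to $\Gamma\,\Omega(\wt-\wopt,\at-\aopt)$. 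Using the unbiasedness identities $\E_l[\ujtp]=\Xrj\wt$ and $\E_j[\vltp]=\tfrac1m(\Xcl)^T\at$ already verified in the text, together with the saddle-point optimality conditions $X_i\wopt\in\partial f_i^*(\aiopt)$ and $-\tfrac1m X^T\aopt\in\partial g(\wopt)$ and the strong convexity of the $f_i^*$ and $g$, the bilinear cross terms telescope against the corresponding subgradient inequalities; what survives is a bound on $\E_t[V^{(t+1)}]$ by $V^{(t)}$ minus a positive multiple of $\Omega(\wt-\wopt,\at-\aopt)$ plus the discrepancy between $\ujtp,\vltp$ and their conditional means, i.e. the gradient-variance terms.

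Then I would bound those variance terms: writing $\ujtp-\Xrj\wt=\tfrac{1}{q_l}\Xjl(\wlt-\wbls)-\sum_k\Xjk(\wkt-\wbks)$ and taking $\E_l$ of its squared norm gives at most $\sum_k\tfrac{\|\Xjk\|^2}{q_k}\|\wkt-\wbks\|^2$, and symmetrically for $\vltp$ one gets at most $\sum_i\tfrac{\|\Xil\|^2}{m^2 p_i}\|\ait-\abis\|^2$. Splitting $\|\wkt-\wbks\|^2\le 2\|\wkt-\wkopt\|^2+2\|\wbks-\wkopt\|^2$ (and likewise for $\alpha$), the factor $9/2$ inside the definition of $\Gamma$ is chosen so that these contributions are dominated by the strong-convexity margin, leaving a recursion of the form $\E_t[\Omega(\wtp-\wopt,\atp-\aopt)]\le(1-c/\Gamma)\,\Omega(\wt-\wopt,\at-\aopt)+(c'/\Gamma)\,\Omega(\wbs-\wopt,\abs-\aopt)$ with absolute constants. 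Unrolling this over $t=0,\dots,M-1$ with $\wini=\wbs$, $\aini=\abs$, and using $\sum_{t\ge0}(1-c/\Gamma)^t\le\Gamma/c$, yields $\E[\Omega(\wbsp-\wopt,\absp-\aopt)]\le\bigl[(1-c/\Gamma)^M+c'/c\bigr]\,\Omega(\wbs-\wopt,\abs-\aopt)$; since $M\ge\log(3)\Gamma$ makes the first term at most $1/3$, and the constants (through the choice $9/2$) are arranged so the whole bracket is at most $2/3$, iterating over $s=0,\dots,S-1$ gives~\eqref{eqn:svrg-stage-converge}.

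The main obstacle I expect is the bookkeeping in the middle step: picking the weighted potential so that the off-diagonal bilinear terms cancel exactly against the strong-convexity/subgradient inequalities at the saddle point, while still leaving enough strong-convexity margin both to absorb the SVRG variance (after splitting it through the anchor) and to produce a genuine $(1-\Theta(1/\Gamma))$ contraction with the precise constants that force the per-stage factor down to $2/3$. The algebra is routine once the potential $V^{(t)}$ and the constant $9/2$ are fixed, but making all three demands simultaneously consistent is the crux.
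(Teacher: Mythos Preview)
Your plan is the paper's own approach: strengthened prox (three-point) inequalities at the saddle point for each block, a weighted potential that under~\eqref{eqn:sigma-i-svrg}--\eqref{eqn:tau-k-svrg} equals $\Gamma\,\Omega(\cdot,\cdot)$, an SVRG variance bound via the anchor $(\wbs,\abs)$, and then an unrolling that yields the per-stage factor $2/3$ once $M\ge\log(3)\Gamma$ (the paper introduces a free parameter $\theta$ and sets $\theta=1/3$ at the end).

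One piece of the ``bookkeeping'' you are underestimating: the bilinear cross terms do \emph{not} all disappear by telescoping and unbiasedness. After the prox inequality and the strong-convexity inequality of $f_i^*$ at $\aiopt$, the surviving inner product is $\langle\aiopt-\aitld,\,\uitp-\Xri\wopt\rangle$, where $\aitld$ is the full (unselected) proximal step. Splitting $\uitp=\Xri\wt+(\uitp-\Xri\wt)$ and $\aitld=\ait+(\aitld-\ait)$ yields three pieces: $\langle\aiopt-\ait,\,\uitp-\Xri\wt\rangle$ has zero conditional mean; $\langle\aiopt-\ait,\,\Xri(\wt-\wopt)\rangle$ is the term that cancels against its primal counterpart, as you say; but $\langle\ait-\aitld,\,\Xri(\wt-\wopt)\rangle$ remains, and it is \emph{not} a variance term. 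The paper handles it (and its primal analogue) by a second Young's inequality, together with the block expansion $\|\Xri(\wt-\wopt)\|^2\le\sum_k\tfrac{1}{q_k}\|\Xik\|^2\|\wkt-\wkopt\|^2$, and absorbs the resulting $\|\aitld-\ait\|^2$ into the $\tfrac{1}{2\sigma_i}\|\aitld-\ait\|^2$ coming from the three-point inequality (which your schematic prox inequality omits but you will need). With the Young parameters set to $a_i=c_i=4\sigma_i$, $b_k=h_k=4\tau_k$, the variance contributes a factor $2$ and this cross-update term a factor $1$ to the coefficient of $\|\wkt-\wkopt\|^2$, giving the $3$ in the paper's Proposition, which after $\theta=1/3$ becomes the $9/2$ in~\eqref{eqn:Gamma-svrg}. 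Apart from this missing term your outline is correct.
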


The proof of Theorem~\ref{thm:dscovr-svrg} is given in 
Appendix~\ref{sec:proof-svrg}.
Here we discuss how to choose the parameter~$\Gamma$ to 
satisfy~\eqref{eqn:Gamma-svrg}.
For simplicity, we assume $\gamma_i=\gamma$ for all $i=1,\ldots,m$. 

\begin{itemize}
\item 
If we let $\|X\|_{m\times n}=\max_{i,k}\{\|\Xik\|\}$
and sample with the uniform distribution across both rows and columns, 
i.e., $p_i=1/m$ for $i=1,\ldots,m$ and $q_k=1/n$ for $k=1,\ldots,n$, 
then we can set
\[
\Gamma=\max\{m,n\}\left(1+\frac{9n\|X\|^2_{m\times n}}{2\lambda\gamma}\right)
=\max\{m,n\}\left(1 + \frac{9}{2}\condrand\right),
\]
where $\condrand = n\|X\|^2_{m\times n}\big/(\lambda\gamma)$ as defined
in~\eqref{eqn:cond-rand-mn}.
\item 
An alternative condition for $\Gamma$ to satisfy is
(shown in Section~\ref{sec:proof-Fro-step-sizes} in the Appendix)
\begin{equation}\label{eqn:Gamma-Fro-bound}
\Gamma \geq \max_{i,k}\left\{ 
\frac{1}{p_i}\left(1+\frac{9\|\Xck\|_F^2}{2 q_k m\lambda\gamma_i}\right),\;
\frac{1}{q_k}\left(1+\frac{9\|\Xri\|_F^2}{2 p_i m\lambda\gamma_i}\right)
\right\}.
\end{equation}
Again using uniform sampling, we can set
\[
\Gamma=\max\{m,n\}\left(1+\frac{9\|X\|^2_{\max, F}}{2\lambda\gamma}\right)
=\max\{m,n\}\left(1 + \frac{9}{2}\condrand'\right),
\]
where $\|X\|_{\max, F}=\max_{i,k}\{\|\Xri\|_F,\|\Xck\|_F\}$
and $\condrand'=\|X\|_{\max,F}^2\big/(\lambda\gamma)$
as defined in~\eqref{eqn:cond-rand-maxF}.
\item 
Using the condition~\eqref{eqn:Gamma-Fro-bound}, 
if we choose the probabilities to be proportional to the squared Frobenius 
norms of the data partitions, i.e., 
\begin{equation}\label{eqn:nonunif-sampling}
  p_i = \frac{\|\Xri\|_F^2}{\|X\|_F^2}, \qquad
  q_k = \frac{\|\Xck\|_F^2}{\|X\|_F^2}, 
\end{equation}
then we can choose
\[
  \Gamma = \frac{1}{\min_{i,k}\{p_i,q_k\}}
  \left(1+\frac{9\|X\|_F^2}{2m\lambda\gamma}\right)
  = \frac{1}{\min_{i,k}\{p_i,q_k\}}
  \left(1+\frac{9}{2}\condrand''\right),
\]
where $\condrand'' = \|X\|_F^2\big/(m\lambda\gamma)$.
Moreover, we can set the step sizes as
(see Appendix~\ref{sec:proof-Fro-step-sizes})
\begin{eqnarray*}
\sigma_i = \frac{m\lambda}{9\|X\|_F^2}, \qquad
\tau_k = \frac{m\gamma_i}{9\|X\|_F^2}.
\end{eqnarray*}
\item 
For the ERM problem~\eqref{eqn:erm-N}, we assume that each loss function
$\phi_j$, for $j=1,\ldots,N$, is $1/\nu$-smooth.
According to~\eqref{eqn:erm-fi}, the smooth parameter for each~$f_i$ is
$\gamma_i = \gamma = (N/m)\nu$.
Let $R$ be the largest Euclidean norm among all rows of~$X$
(or we can normalize each row to have the same norm~$R$), 
then we have $\|X\|_F^2\leq N R^2$ and 
\begin{equation}\label{eqn:erm-condrand}
\condrand'' = \frac{\|X\|_F^2}{m\lambda\gamma}
\leq \frac{N R^2}{m\lambda\gamma}
= \frac{R^2}{\lambda\nu}.
\end{equation}
The upper bound $R^2/(\lambda\nu)$ is a condition number used for 
characterizing the iteration complexity of many randomized algorithms for ERM
\citep[e.g.,][]{SSZhang13SDCA,LeRouxSchmidtBach12,JohnsonZhang13,NIPS2014SAGA,
ZhangXiao2017SPDC}.
In this case, using the non-uniform sampling in~\eqref{eqn:nonunif-sampling},
we can set the step sizes to be 
\begin{equation}\label{eqn:simple-step-size-R}
\sigma_i = \frac{\lambda}{9R^2}\frac{m}{N}, \qquad
\tau_k = \frac{\gamma}{9R^2}\frac{m}{N} = \frac{\nu}{9R^2}.
\end{equation}
\end{itemize}

Next we estimate the overall computation complexity of DSCOVR-SVRG in order to
achieve $\E\bigl[\Omega(\wbs-\wopt,\abs-\aopt)\bigr]\leq \epsilon$.
From~\eqref{eqn:svrg-stage-converge}, the number of stages required is
$\log\bigl(\Omegaini/\epsilon\bigr)\big/\log(3/2)$, where
$\Omegaini=\Omega(\wbini-\wopt,\abini-\aopt)$.
The number of inner iterations within each stage is
$M=\log(3)\Gamma$.
At the beginning of of each stage, computing the batch gradients~$\ubs$ 
and~$\vbs$ requires going through the whole data set $X$, 
whose computational cost is equivalent to $m\times n$ inner iterations.
Therefore, the overall complexity of Algorithm~\ref{alg:dscovr-svrg},
measured by total number of inner iterations, is
\[
O\left(\bigl(mn+\Gamma\bigr)\log\left(\frac{\Omegaini}{\epsilon}\right)\right).
\]
To simplify discussion, we further assume $m\leq n$, which is always the case
for distributed implementation 
(see Figure~\ref{fig:simul-parallel} and Section~\ref{sec:implementation}).
In this case, we can let $\Gamma=n(1+(9/2)\condrand)$.
Thus the above iteration complexity becomes
\begin{equation}\label{eqn:iter-complexity-svrg}
  O\bigl(n(1+m+\condrand)\log(1/\epsilon)\bigr) .
\end{equation}
Since the iteration complexity in~\eqref{eqn:iter-complexity-svrg} counts 
the number of blocks $\Xik$ being processed, the number of passes over
the whole dataset $X$ can be obtained by dividing it by $mn$, i.e., 
\begin{equation}\label{eqn:pass-complexity-svrg}
O\left( \left(1+\frac{\condrand}{m}\right)\log(1/\epsilon) \right).
\end{equation}
This is the computation complexity of DSCOVR listed in 
Table~\ref{tab:comm-comp}.
We can replace $\condrand$ by $\condrand'$ or $\condrand''$ 
depending on different proof techniques and sampling probabilities 
as discussed above.
We will address the communication complexity for DSCOVR-SVRG,
including its decomposition into synchronous and asynchronous ones,
after describing its implementation details in Section~\ref{sec:implementation}.

In addition to convergence to the saddle point, 
our next result shows that the primal-dual optimality gap 
also enjoys the same convergence rate, under slightly different conditions.

\begin{theorem}\label{thm:dscovr-svrg-obj}
Suppose Assumption~\ref{asmp:convexity} holds,
and let $P(w)$ and $D(\alpha)$ be the primal and dual functions defined 
in~\eqref{eqn:erm-primal} and~\eqref{eqn:erm-dual}, respectively.
Let $\Lambda$ and $\Gamma$ be two constants that satisfy
\[
  \Lambda ~\geq~  \|\Xik\|_F^2 \,,  
  \qquad i=1,\ldots,m,\quad k=1,\ldots,n,
\]
and 
\[
\Gamma ~\geq~ \max_{i,k}\left\{ 
\frac{1}{p_i}\left(1 + \frac{18\Lambda}{q_k \lambda\gamma_i}\right) ,\;
\frac{1}{q_k}\left(1 + \frac{18n\Lambda}{p_i m\lambda\gamma_i}\right)\right\}.
\]
In Algorithm~\ref{alg:dscovr-svrg}, if we choose the step sizes as
\begin{eqnarray}
  \sigma_i &=& \frac{1}{\gamma_i(p_i\Gamma-1)},  \qquad i=1,\ldots,m, 
	\label{eqn:sigma-i-obj} \\
  \tau_k &=& \frac{1}{\lambda(q_k\Gamma-1)},  \qquad k=1,\ldots,n,
	\label{eqn:tau-k-obj}
\end{eqnarray}
and the number of iterations during each stage satisfies
$M\geq\log(3)\Gamma$,
then
\begin{equation}\label{eqn:duality-gap-converge}
  \E\left[ P(\wbs)-D(\abs) \right] ~\leq~ 
  \left(\frac{2}{3}\right)^s 
  2\Gamma \left(P(\wbini)-D(\abini)\right) .
\end{equation}
\end{theorem}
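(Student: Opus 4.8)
The plan is to reduce the duality-gap bound to the distance bound already available from Theorem~\ref{thm:dscovr-svrg}, by sandwiching $P(\wbs)-D(\abs)$ between two multiples of $\Omega(\wbs-\wopt,\abs-\aopt)$: a weak lower bound that converts the \emph{initial} gap into an initial distance, and an upper bound (with a constant of order $\Gamma$) that converts the \emph{final} distance back into a gap. Concretely, the three pieces are: (i) a stage-wise contraction $\E\bigl[\Omega(\wbs-\wopt,\abs-\aopt)\bigr]\le(2/3)^s\,\Omega(\wbini-\wopt,\abini-\aopt)$; (ii) $\Omega(\wbini-\wopt,\abini-\aopt)\le 2\bigl(P(\wbini)-D(\abini)\bigr)$; and (iii) $P(\wbs)-D(\abs)\le\Gamma\,\Omega(\wbs-\wopt,\abs-\aopt)$. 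Chaining these yields exactly $\E[P(\wbs)-D(\abs)]\le(2/3)^s\,2\Gamma\,(P(\wbini)-D(\abini))$.

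Step (ii) is the easiest: under Assumption~\ref{asmp:convexity}, $P$ is $\lambda$-strongly convex, and since each $f_i^*$ is $\gamma_i$-strongly convex, $-D$ is strongly convex with respect to the norm $\alpha\mapsto\frac1m\sum_i\gamma_i\|\alpha_i\|^2$. Because $\wopt$ and $\aopt$ are the minimizers of $P$ and $-D$ and $P(\wopt)=D(\aopt)$, adding the two strong-convexity inequalities gives $P(w)-D(\alpha)\ge\frac12\Omega(w-\wopt,\alpha-\aopt)$ for \emph{every} $(w,\alpha)$, in particular at $(\wbini,\abini)$. For step (i) I would reuse the machinery behind Theorem~\ref{thm:dscovr-svrg} in Appendix~\ref{sec:proof-svrg}. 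The step sizes \eqref{eqn:sigma-i-obj}--\eqref{eqn:tau-k-obj} are twice those of Theorem~\ref{thm:dscovr-svrg} and the constant $\Gamma$ here is larger (it already satisfies condition \eqref{eqn:Gamma-svrg}, since $18\Lambda\ge18\|\Xik\|_F^2\ge\frac92\|\Xik\|^2$); I would state the inner-loop Lyapunov estimate with a free step-size parameter so that both choices are covered, and check that in the present regime it still produces a $(1-1/\Gamma)$-type per-iteration decrease of $\E[\Omega(\wt-\wopt,\at-\aopt)]$ plus the usual SVRG correction term tied to the anchor $(\wbs,\abs)$; running $M\ge\log(3)\Gamma$ inner iterations collapses this into the factor $2/3$ per stage.

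The main obstacle is step (iii): upper bounding the duality gap by $\Gamma\,\Omega$. Writing $P(w)-D(\alpha)$ via Fenchel--Young as $\frac1m\sum_i\bigl(f_i(X_iw)+f_i^*(\alpha_i)-\alpha_i^TX_iw\bigr)+\bigl(g(w)+g^*(-\tfrac1mX^T\alpha)+\langle w,\tfrac1mX^T\alpha\rangle\bigr)$ exhibits it as a sum of nonnegative Fenchel--Young gaps, but under our assumptions each such gap is only \emph{lower} bounded by a quadratic (from strong convexity of $f_i^*$ and $g$), not upper bounded, since neither $f_i$ nor $g$ is assumed smooth. The remedy is to exploit the structure of the iterates: each block of $\wbs$ (resp.\ $\abs$) was produced by a proximal step, whose optimality condition supplies an explicit subgradient of the corresponding $g_l$ (resp.\ $f_j^*$), so bounding the Fenchel--Young gaps reduces to controlling the residuals $X_i\wbs$ against the maintained $\ubs$ and $\frac1mX^T\abs$ against $\vbs$ (cf.\ \eqref{eqn:vr-sg-u}--\eqref{eqn:vr-sg-v}). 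This is exactly where the condition $\Lambda\ge\|\Xik\|_F^2$ enters: the residuals couple an entire row or column of blocks, so the relevant norms are $\|\Xri\|_F^2\le n\Lambda$ and $\|\Xck\|_F^2\le m\Lambda$ rather than operator norms, and the terms $18\Lambda/(q_k\lambda\gamma_i)$ and $18n\Lambda/(p_i m\lambda\gamma_i)$ in the definition of $\Gamma$ are precisely what is needed to absorb them. I expect this residual-tracking argument, carried through a full stage, to be the bulk of the work; the remainder is bookkeeping of absolute constants.
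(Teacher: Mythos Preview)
Your three-step plan differs substantially from the paper's route, and step~(iii) contains a genuine gap. You correctly observe that $P(w)-D(\alpha)\le\Gamma\,\Omega(w-\wopt,\alpha-\aopt)$ is \emph{false} for generic $(w,\alpha)$ because $g$ is not smooth (and $f_i^*$ is not smooth, since $f_i$ is not assumed strongly convex --- though contrary to what you wrote, $f_i$ \emph{is} smooth by Assumption~\ref{asmp:convexity}). Your proposed remedy, exploiting the prox optimality conditions, is much harder than you suggest: each block $w_k$ of $\wbs=\wM$ was last updated at a different, random iteration $t_k<M$, and the subgradient of $g_k$ available there involves the stochastic gradient $v_k^{(t_k+1)}$ and the pre-update value $w_k^{(t_k)}$. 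Bounding the gap this way requires controlling all these stale quantities simultaneously across blocks and realizations --- a substantial argument you have not supplied, not ``bookkeeping of absolute constants.'' Step~(i) is also not immediate: the step sizes \eqref{eqn:sigma-i-obj}--\eqref{eqn:tau-k-obj} do not satisfy the identity used in Appendix~\ref{sec:proof-svrg} (they give $\frac{1}{p_i}(\frac{1}{2}+\frac{1}{2\sigma_i\gamma_i})=\frac{\Gamma}{2}$, not $\Gamma$), so the distance-only Lyapunov $\Omega$ is not shown to contract at the claimed rate.

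The paper sidesteps both issues by building function-value terms into the Lyapunov function from the start. Setting $\Ptld_k(\wk)=\frac{1}{m}(\aopt)^T\Xck(\wk-\wkopt)+g_k(\wk)-g_k(\wkopt)$ and an analogous $\Dtld_i$, so that $\sum_k\Ptld_k-\sum_i\Dtld_i=L(w,\aopt)-L(\wopt,\alpha)$, it proves new one-step lemmas (variants of Lemmas~\ref{lem:dual-expect-t}--\ref{lem:primal-expect-t}) that keep $f_i^*$ and $g_k$ explicit rather than absorbing them via strong convexity. This yields a per-iteration contraction for the augmented quantity $\Dt=\sum_k\frac{1}{q_k}\Ptld_k(\wkt)-\sum_i\frac{1}{p_i}\Dtld_i(\ait)+\frac{\Gamma}{2}\Omega(\wt-\wopt,\at-\aopt)$; the step sizes \eqref{eqn:sigma-i-obj}--\eqref{eqn:tau-k-obj} are tuned so that \emph{this} $\Dt$ contracts. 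At the end, $\Dbs\ge P(\wbs)-D(\abs)$ follows from a lemma bounding the duality gap by $L(w,\aopt)-L(\wopt,\alpha)$ plus quadratic terms (via smoothness of $w\mapsto\frac{1}{m}\sum_i f_i(X_iw)$ and of $g^*$), and $\Dbini\le 2\Gamma(P(\wbini)-D(\abini))$ uses $1/p_i,1/q_k<\Gamma$ together with your step~(ii).
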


The proof of Theorem~\ref{thm:dscovr-svrg-obj} is given in
Appendix~\ref{sec:proof-svrg-obj}.
In terms of iteration complexity or total number of passes to reach 
$\E\bigl[P(\wbs) - D(\abs)\bigr]\leq\epsilon$, 
we need to add an extra factor of $\log(1+\condrand)$ 
to~\eqref{eqn:iter-complexity-svrg} or~\eqref{eqn:pass-complexity-svrg},
due to the factor $\Gamma$ on the right-hand side 
of~\eqref{eqn:duality-gap-converge}.

\section{The DSCOVR-SAGA Algorithm}
\label{sec:dscovr-saga}

\begin{algorithm}[!t]
\caption{DSCOVR-SAGA}
\label{alg:dscovr-saga}
\begin{algorithmic}[1]
\REQUIRE initial points $\wini,\aini$, and number of iterations $M$.
\vspace{0.3em}
\STATE $\ubini=X\wini$ and $\vbini=\frac{1}{m}X^T\aini$ \\[0.3em]
\STATE $\Uikini = \Xik\wkini$, $\Vikini=\frac{1}{m}(\aiini)^T \Xik$, 
       for all $i=1,\ldots,m$ and $k=1,\ldots,K$.\\[0.5em]
\FOR {$t=0,1,2,\ldots,M-1$}
\vspace{0.3em}
\STATE pick $j\in\{1,\ldots,m\}$ and $l\in\{1,\ldots,n\}$ 
randomly with distributions $p$ and $q$ respectively.\\[0.3em]
\STATE compute variance-reduced stochastic gradients:
\begin{eqnarray}
\ujtp &=& \ubjt-\frac{1}{q_l}\Ujlt+\frac{1}{q_l}\Xjl\wlt, 
\label{eqn:saga-sg-u} \\
\vltp &=& \vblt-\frac{1}{p_j}(\Vjlt)^T +\frac{1}{p_j}\frac{1}{m}(\Xjl)^T\ajt.
\label{eqn:saga-sg-v} 
\end{eqnarray}
\vspace{-1em}
\STATE update primal and dual block coordinates: 
\begin{eqnarray*}
\aitp &=& \Biggl\{	\begin{array}{ll}
    \prox_{\sigma_j\Phi_j^*}\bigl(\ajt+\sigma_j\ujtp\bigr) &\textrm{if}~i=j.\\
    \ait,  &\textrm{if}~i\neq j, \end{array} \\
\wktp &=& \Biggl\{ \begin{array}{ll}
    \prox_{\tau_l g_l}\bigl(\wlt-\tau_l\vltp\bigr) & \text{if}~ k=l,\\
    \wkt, & \textrm{if}~i\neq j. \end{array} 
\end{eqnarray*}
\vspace{-1em}
\STATE update averaged stochastic gradients:
\begin{eqnarray*}
    \ubitp &=& \biggl\{\begin{array}{ll}\ubjt-\Ujlt+\Xjl\wlt
     &\textrm{if}~i=j, \\ \ubit & \textrm{if}~i\neq j, \end{array} \\
    \vbktp &=& \biggl\{\begin{array}{ll}\vblt-(\Vjlt)^T+\frac{1}{m}(\Xjl)^T\ajt
     &\textrm{if}~k=l, \\ \vbkt & \textrm{if}~k\neq l, \end{array} 
\end{eqnarray*}
\vspace{-1em}
\STATE update the table of historical stochastic gradients:
\begin{eqnarray*}
    \Uiktp &=& \biggl\{\begin{array}{ll}\Xjl\wlt 
     & \textrm{if}~i=j~\textrm{and}~k=l,\\
     \Uikt & \textrm{otherwise}. \end{array} \\
    \Viktp &=& \biggl\{\begin{array}{ll}\frac{1}{m}\bigl((\Xjl)^T\ajt\bigr)^T
     & \textrm{if}~i=j~\textrm{and}~k=l,\\
     \Vikt & \textrm{otherwise}. \end{array}
\end{eqnarray*}
\vspace{-1em}
\ENDFOR \\[0.3em]
\ENSURE $\wM$ and $\aM$.
\end{algorithmic}
\end{algorithm}

\noindent
Algorithm~\ref{alg:dscovr-saga} is a DSCOVR algorithm that uses the techniques
of SAGA \citep{NIPS2014SAGA} for variance reduction.
This is a single stage algorithm with iterations indexed by~$t$.
In order to compute the variance-reduced stochastic gradients 
$\ujtp$ and $\vltp$ at each iteration, we also need to maintain and update
two vectors $\ubt\in\R^N$ and $\vbt\in\R^d$, and 
two matrices $\Ut\in\R^{N\times n}$ and $\Vt\in\R^{m\times d}$.
The vector $\ubt$ shares the same partition as $\at$ into~$m$ blocks, 
and $\vbt$ share the same partitions as $\wt$ into~$n$ blocks. 
The matrix $\Ut$ is partitioned into $m\times n$ blocks, with each block
$\Uikt\in\R^{N_i \times 1}$.
The matrix $\Vt$ is also partitioned into $m\times n$ blocks, with each block
$\Vikt\in\R^{1\times d_k}$.
According to the updates in Steps~7 and~8 of Algorithm~\ref{alg:dscovr-saga},
we have
\begin{eqnarray}
  \ubit &=& \sum_{k=1}^n \Uikt, \qquad i=1,\ldots,m, 
  \label{eqn:ubit=sumUikt} \\
  \vbkt &=& \sum_{i=1}^m \bigl(\Vikt\bigr)^T, \qquad k=1,\ldots,n.
  \label{eqn:vbkt=sumVikt}
\end{eqnarray}

Based on the above constructions, we can show that $\ujtp$ is an unbiased 
stochastic gradient of $(\at)^T X\wt$ with respect to $\aj$, 
and $\vltp$ is an unbiased stochastic gradient of 
$(1/m)\bigl((\at)^T X\wt\bigr)$ with respect to~$\wl$.
More specifically, according to~\eqref{eqn:saga-sg-u}, we have
\begin{eqnarray}
\E_l\bigl[\ujtp\bigr] 
&=& \ubjt-\sum_{k=1}^n q_k\left(\frac{1}{q_k}\Ujkt\right) 
 +\sum_{k=1}^n q_k \left(\frac{1}{q_k}\Xjk\wkt\right) \nonumber \\
&=& \ubjt -\sum_{k=1}^n \Ujkt + \sum_{k=1}^n \Xjk\wkt \nonumber \\
&=& \ubjt -\ubjt + \Xrj\wt \nonumber \\
&=& \Xrj\wt 
~=~ \frac{\partial}{\partial \aj}\left(\bigl(\at\bigr)^T X\wt\right),
\label{eqn:ujtp-expect}
\end{eqnarray}
where the third equality is due to~\eqref{eqn:ubit=sumUikt}.
Similarly, according to~\eqref{eqn:saga-sg-v}, we have
\begin{eqnarray}
\E_j\bigl[\vltp\bigr] 
&=& \vblt-\sum_{i=1}^m p_i\left(\frac{1}{p_i}(\Vilt)^T\right)
+\sum_{i=1}^m p_i \left(\frac{1}{p_i m}(\Xil)^T\ait\right) \nonumber\\
&=& \vblt -\sum_{i=1}^m \Vilt + \frac{1}{m}\sum_{i=1}^m(\Xil)^T\ait \nonumber\\
&=& \vblt -\vblt + \frac{1}{m} (\Xcl)^T\at \nonumber\\
&=& \frac{1}{m}(\Xcl)^T\at
~=~ \frac{\partial}{\partial\wl}\left(\frac{1}{m}\bigl(\at\bigr)^T X\wt\right),
\label{eqn:vltp-expect}
\end{eqnarray}
where the third equality is due to~\eqref{eqn:vbkt=sumVikt}.

Regarding the convergence of DSCOVR-SAGA, we have the following theorem,
which is proved in Appendix~\ref{sec:proof-saga}.

\begin{theorem}\label{thm:dscovr-saga}
Suppose Assumption~\ref{asmp:convexity} holds, and let $(\wopt,\aopt)$
be the unique saddle point of $L(w,\alpha)$.
Let $\Gamma$ be a constant that satisfies
\begin{equation}\label{eqn:Gamma-saga}
\Gamma ~\geq~ \max_{i,k}\left\{ 
\frac{1}{p_i}\left(1+\frac{9\|\Xik\|^2}{2 q_k \lambda\gamma_i}\right),\;
\frac{1}{q_k}\left(1+\frac{9n\|\Xik\|^2}{2 p_i m\lambda\gamma_i}\right),\;
\frac{1}{p_i q_k} \right\}.
\end{equation}
If we choose the step sizes as
\begin{eqnarray}
  \sigma_i &=& \frac{1}{2\gamma_i(p_i\Gamma-1)},  \qquad i=1,\ldots,m, 
	\label{eqn:sigma-i-saga} \\
  \tau_k &=& \frac{1}{2\lambda(q_k\Gamma-1)},  \qquad k=1,\ldots,n,
	\label{eqn:tau-k-saga}
\end{eqnarray}
Then the iterations of Algorithm~\ref{alg:dscovr-saga} satisfy,
for $t=1,2,\ldots$,
\begin{equation}\label{eqn:saga-convergence}
\E\left[\Omega\bigl(\wt-\wopt, \at-\aopt\bigr)\right] 
~\leq~ \left(1-\frac{1}{3\Gamma}\right)^{t} 
\frac{4}{3}\Omega\bigl(\wini-\wopt,\aini-\aopt\bigr).
\end{equation}
\end{theorem}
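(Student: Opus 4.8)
The plan is to establish a one-step contraction inequality in expectation for the weighted distance $\Omega(\wt-\wopt,\at-\aopt)$ plus a correction term that accounts for the staleness of the SAGA gradient table, and then unroll it. First I would set up a Lyapunov (potential) function of the form
\[
\mathcal{P}^{(t)} = \Omega\bigl(\wt-\wopt,\at-\aopt\bigr) + c\sum_{i,k} \beta_{ik}\,\bigl\|\Uikt - \Xik\wkopt\bigr\|^2 + c'\sum_{i,k}\beta'_{ik}\,\bigl\|\Vikt - \tfrac{1}{m}(\aiopt)^T\Xik\bigr\|^2,
\]
with weights $\beta_{ik},\beta'_{ik}$ and scalars $c,c'$ to be chosen; the table terms vanish at the saddle point because $\Uikt$ tracks $\Xik\wkt$ and $\Vikt$ tracks $\tfrac1m(\ait)^T\Xik$. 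The goal is to show $\E[\mathcal{P}^{(t+1)}\mid\mathcal{F}^{(t)}] \le (1-\tfrac{1}{3\Gamma})\mathcal{P}^{(t)}$, and then observe that $\mathcal{P}^{(0)} = \Omega(\wini-\wopt,\aini-\aopt)$ since the tables are initialized consistently with $\wini,\aini$, while $\mathcal{P}^{(t)} \ge \Omega(\wt-\wopt,\at-\aopt)$; the extra factor $4/3$ presumably comes from a slack in bounding intermediate cross terms rather than from the initialization.

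Second, for the single-coordinate step I would use the standard primal-dual proximal analysis: the prox updates for $f_j^*$ (which is $\gamma_j$-strongly convex) and $g_l$ ($\lambda$-strongly convex) give, via the firm-nonexpansiveness / strong-convexity inequality of the prox operator, bounds of the form
\[
\bigl(1+\sigma_j\gamma_j\bigr)\|\ajtp-\aiopt\|^2 \le \|\ajt-\aiopt\|^2 + 2\sigma_j\langle \ujtp - \Xrj\wopt,\, \aiopt-\ajtp\rangle - \|\ajtp-\ajt\|^2,
\]
and symmetrically for $\wltp$. Multiplying by the $\Omega$-weights $\lambda$ and $\gamma_i/m$, summing, and taking conditional expectation over the random pair $(j,l)$ introduces the factors $1/p_i$ and $1/q_k$; the key cancellation is that $\E_{j,l}[\ujtp] = \Xrj\wt$ and $\E_{j,l}[\vltp] = \tfrac1m(\Xcl)^T\at$ (shown in \eqref{eqn:ujtp-expect}--\eqref{eqn:vltp-expect}), so the linear terms combine into the bilinear saddle-point coupling $\langle X(\wt-\wopt),\at-\aopt\rangle$ terms that cancel between the primal and dual sides. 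What remains is to control the variance $\E\|\ujtp - \Xrj\wt\|^2$ and $\E\|\vltp - \tfrac1m(\Xcl)^T\at\|^2$: each splits (up to a factor 2 or 3) into a term involving $\|\Xik\|^2\|\wkt-\wkopt\|^2$ (absorbed into the $\Omega$ decrease via the $9\|\Xik\|^2/(2q_k\lambda\gamma_i)$ budget in $\Gamma$) and a term involving the table error $\|\Uikt - \Xik\wkopt\|^2$ (absorbed into the Lyapunov correction). The $\frac{1}{p_iq_k}$ term in \eqref{eqn:Gamma-saga}, absent in the SVRG version, is exactly what is needed so that the coefficient of the newly refreshed table entry is dominated after one step — this is the SAGA-specific bookkeeping.

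Third, the negative quadratic terms $-\|\ajtp-\ajt\|^2$ and $-\|\wltp-\wlt\|^2$ produced by the prox inequalities must be used to absorb the extra variance contributions that are proportional to the step sizes $\sigma_j,\tau_l$; plugging in the specific choices \eqref{eqn:sigma-i-saga}--\eqref{eqn:tau-k-saga}, which make $\sigma_j\gamma_j = \tfrac{1}{2(p_j\Gamma-1)}$ and $\tau_l\lambda = \tfrac{1}{2(q_l\Gamma-1)}$, is what converts $1+\sigma_j\gamma_j$ into a contraction factor $\frac{p_j\Gamma - 1/2}{p_j\Gamma-1}$ and, after accounting for the $1/p_j$ from the expectation and the Lyapunov slack, yields the uniform rate $1-\tfrac{1}{3\Gamma}$. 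I expect the main obstacle to be the precise choice of the Lyapunov weights $\beta_{ik},\beta'_{ik},c,c'$ so that (a) the table-error decrease from "most entries unchanged, one entry refreshed to the current exact value" exactly matches the table-error \emph{increase} injected through the variance bound on $\ujtp,\vltp$, and (b) the leftover is enough to give the claimed rate — this is the delicate inequality-juggling step where the constants $9/2$, $3\Gamma$, and $4/3$ all have to line up simultaneously, and it is where I would spend the most care; everything else is the now-standard SVRG/SAGA-style primal-dual template.
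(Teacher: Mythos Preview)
Your approach is essentially the same as the paper's: a Lyapunov function combining $\Omega$ with SAGA table-error terms, a one-step contraction at rate $1-\tfrac{1}{3\Gamma}$ obtained by adding weighted table-recursion identities to the primal-dual prox inequality (the paper's Proposition~\ref{prop:saga-primal-dual}), and then unrolling. The paper parameterizes the table errors via the \emph{anchor points} $(\Wikt)^T$, $\Aikt$ at which the entries were last computed rather than via $\Uikt,\Vikt$ directly, but this is only a reweighting by $\|\Xik\|^2$ and makes no structural difference.

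One concrete correction: your claim that $\mathcal{P}^{(0)}=\Omega(\wini-\wopt,\aini-\aopt)$ is wrong, and the $4/3$ does \emph{not} come from ``slack in bounding intermediate cross terms.'' The tables being initialized consistently with $\wini,\aini$ means $\Uikini=\Xik\wkini$, so the table-error term at $t=0$ is $\|\Xik(\wkini-\wkopt)\|^2$, which is nonzero. In the paper's parameterization the correction at $t=0$ is $\sum_{i,k}\frac{\zeta_{ik}}{\Gamma}\|\aiini-\aiopt\|^2+\sum_{i,k}\frac{\xi_{ik}}{\Gamma}\|\wkini-\wkopt\|^2$, and the chosen weights together with $\Gamma\ge 1/(p_iq_k)$ give $\xi_{ik}/\Gamma\le\lambda/(3m)$ and $\zeta_{ik}/\Gamma\le\gamma_i/(3mn)$, whence $\mathcal{P}^{(0)}\le\tfrac{4}{3}\,\Omega(\wini-\wopt,\aini-\aopt)$. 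That is the origin of the factor $4/3$, and it is also the only place where the extra requirement $\Gamma\ge 1/(p_iq_k)$ in~\eqref{eqn:Gamma-saga} is used beyond the contraction step.
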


The condition on~$\Gamma$ in~\eqref{eqn:Gamma-saga} is very similar to
the one in~\eqref{eqn:Gamma-svrg}, except that here we have an additional term 
$1/(p_i q_k)$ when taking the maximum over~$i$ and~$k$. 
This results in an extra $mn$ term in estimating~$\Gamma$ under uniform
sampling.
Assuming $m\leq n$ (true for distributed implementation), we can let 
\[
  \Gamma = n\left(1+\frac{9}{2}\condrand\right) + mn.
\]
According to~\eqref{eqn:saga-convergence}, 
in order to achieve $\E\bigl[\Omega(\wt-\wopt,\at-\aopt)\bigr]\leq \epsilon$,
DSCOVR-SAGA needs $O\left(\Gamma\log(1/\epsilon)\right)$ iterations.
Using the above expression for~$\Gamma$, the iteration complexity is
\begin{equation}\label{eqn:iter-complexity-saga}
  O\bigl(n(1+m+\condrand)\log(1/\epsilon)\bigr) ,
\end{equation}
which is the same as~\eqref{eqn:iter-complexity-svrg} for DSCOVR-SVRG.
This also leads to the same computational complexity measured by the number
of passes over the whole dataset, which is given 
in~\eqref{eqn:pass-complexity-svrg}.
Again we can replace $\condrand$ by $\condrand'$ or $\condrand''$ 
as discussed in Section~\ref{sec:dscovr-svrg}.
We will discuss the communication complexity of DSCOVR-SAGA in
Section~\ref{sec:implementation}, after describing its
implementation details.

\section{Accelerated DSCOVR Algorithms}
\label{sec:dscovr-accl}

In this section, we develop an accelerated DSCOVR algorithm by following
the ``catalyst'' framework 
\citep{LinMairalHarchaoui2015,FrostigGeKakadeSidford2015}. 
More specifically, we adopt the same procedure by \citet{BalamuruganBach2016} 
for solving convex-concave saddle-point problems.

\begin{algorithm}[t]
\caption{Accelerated DSCOVR}
\label{alg:accl-dscovr}
\begin{algorithmic}[1]
\REQUIRE initial points $\wtldini,\atldini$, and parameter $\delta>0$.
\FOR {$r=0,1,2,\ldots,$}
\STATE find an approximate saddle point of~\eqref{eqn:Lagrangian-delta}
using one of the following two options:
\begin{itemize}
\item \hspace{-1.5em} \emph{option 1:} 
run Algorithm~\ref{alg:dscovr-svrg} 
with $S=\frac{2\log(2(1+\delta))}{\log(3/2)}$ and $M=\log(3)\Gamma_\delta$ 
to obtain
\[
  (\wtldrp,\atldrp) = \textrm{DSCOVR-SVRG}(\wtldr,\atldr,S,M).
\]
\item \hspace{-1.5em} \emph{option 2:} 
run Algorithm~\ref{alg:dscovr-saga} with
$M=6\log\left(\frac{8(1+\delta)}{3}\right)\Gamma_\delta$ to obtain
\[
(\wtldrp,\atldrp) = \textrm{DSCOVR-SAGA}(\wtldr,\atldr,M).
\]
\end{itemize}
\ENDFOR
\end{algorithmic}
\end{algorithm}

Algorithm~\ref{alg:accl-dscovr} proceeds in rounds indexed by $r=0,1,2,\ldots$.
Given the initial points $\wtldini\in\R^d$ and $\atldini\in\R^N$, 
each round~$r$ computes two new vectors $\wtldrp$ and $\atldrp$ using either
the DSCOVR-SVRG or DSCOVR-SAGA algorithm for solving a regulated saddle-point
problem, similar to the classical proximal point algorithm
\citep{Rockafellar76PPA}.

Let $\delta>0$ be a parameter which we will determine later.
Consider the following perturbed saddle-point function for round~$r$:
\begin{equation}\label{eqn:Lagrangian-delta}
  \Lagr^{(r)}_\delta(w,a) = \Lagr(w,\alpha) 
  + \frac{\delta\lambda}{2}\|w-\wtldr\|^2 
  - \frac{\delta}{2m}\sum_{i=1}^m\gamma_i\|\ai-\aitldr\|^2 .
\end{equation}
Under Assumption~\ref{asmp:convexity}, the function $\Lagr^{(r)}_\delta(w,a)$
is $(1+\delta)\lambda$-strongly convex in~$w$ and
$(1+\delta)\gamma_i/m$-strongly concave in~$\alpha_i$.
Let $\Gamma_\delta$ be a constant that satisfies
\[
\Gamma_\delta ~\geq~ \max_{i,k}\left\{ 
\frac{1}{p_i}\left(1 + \frac{9\|\Xik\|^2}{2 q_k \lambda\gamma_i(1+\delta)^2}
\right),~
\frac{1}{q_k}\left(1 + \frac{9n\|\Xik\|^2}{2 p_i m\lambda\gamma_i(1+\delta)^2}
\right),~ \frac{1}{p_i q_k} \right\},
\]
where the right-hand side is obtained from~\eqref{eqn:Gamma-saga}
by replacing $\lambda$ and $\gamma_i$ with $(1+\delta)\lambda$ and  
$(1+\delta)\gamma_i$ respectively.
The constant $\Gamma_\delta$ is used in Algorithm~\ref{alg:accl-dscovr}
to determine the number of inner iterations to run with each round,
as well as for setting the step sizes.
The following theorem is proved in Appendix~\ref{sec:proof-accl}.

\begin{theorem}\label{thm:dscovr-accl}
Suppose Assumption~\ref{asmp:convexity} holds, and let $(\wopt,\aopt)$ 
be the saddle-point of~$\Lagr(w,\alpha)$.
With either options in Algorithm~\ref{alg:accl-dscovr}, 
if we choose the step sizes 
(inside Algorithm~\ref{alg:dscovr-svrg} or Algorithm~\ref{alg:dscovr-saga}) as
\begin{eqnarray}
\sigma_i &=& \frac{1}{2(1+\delta)\gamma_i(p_i\Gamma_\delta-1)},  
\qquad i=1,\ldots,m, \label{eqn:sigma-i-delta} \\
\tau_k &=& \frac{1}{2(1+\delta)\lambda(q_k\Gamma_\delta-1)},  
\qquad k=1,\ldots,n. \label{eqn:tau-k-delta}
\end{eqnarray}
Then for all $r\geq 1$,
\[
\E\left[\Omega\bigl(\wtldr-\wopt, \atldr-\aopt\bigr)\right] 
~\leq~ \left(1 - \frac{1}{2(1+\delta)}\right)^{2r}
\Omega\bigl(\wtldini-\wopt, \atldini-\aopt\bigr).
\]
\end{theorem}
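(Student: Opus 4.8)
The plan is to combine the catalyst/inexact-proximal-point framework with the linear convergence guarantees already established for DSCOVR-SVRG (Theorem~\ref{thm:dscovr-svrg}) and DSCOVR-SAGA (Theorem~\ref{thm:dscovr-saga}), applied to the perturbed saddle-point function $\Lagr^{(r)}_\delta$ in~\eqref{eqn:Lagrangian-delta}. First I would record the key structural facts about $\Lagr^{(r)}_\delta$: it is $(1+\delta)\lambda$-strongly convex in $w$ and $(1+\delta)\gamma_i/m$-strongly concave in each $\alpha_i$, so it has a unique saddle point, call it $(\wtldoptr,\atldoptr)$, and Assumption~\ref{asmp:convexity} applies to it with $\lambda,\gamma_i$ replaced by $(1+\delta)\lambda,(1+\delta)\gamma_i$. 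Because $\Gamma_\delta$ was defined precisely by making this substitution in~\eqref{eqn:Gamma-saga}, the step sizes~\eqref{eqn:sigma-i-delta}--\eqref{eqn:tau-k-delta} are exactly the DSCOVR step sizes for the perturbed problem, so Theorems~\ref{thm:dscovr-svrg} and~\ref{thm:dscovr-saga} apply verbatim to each inner solve.

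Next I would quantify the per-round contraction. For option 1 (SVRG), running $S=\frac{2\log(2(1+\delta))}{\log(3/2)}$ stages gives, by~\eqref{eqn:svrg-stage-converge}, $\E[\Omega(\wtldrp-\wtldoptr,\atldrp-\atldoptr)] \leq (2/3)^S \,\Omega(\wtldr-\wtldoptr,\atldr-\atldoptr) = \frac{1}{4(1+\delta)^2}\Omega(\wtldr-\wtldoptr,\atldr-\atldoptr)$; for option 2 (SAGA), running $M=6\log\!\big(\tfrac{8(1+\delta)}{3}\big)\Gamma_\delta$ iterations gives, by~\eqref{eqn:saga-convergence} and $(1-\tfrac{1}{3\Gamma_\delta})^{3\Gamma_\delta}\leq e^{-1}$, the same bound $\E[\Omega(\wtldrp-\wtldoptr,\atldrp-\atldoptr)]\leq \frac{1}{4(1+\delta)^2}\Omega(\wtldr-\wtldoptr,\atldr-\atldoptr)$ (the constants $S$ and $M$ were reverse-engineered to make both options land on this contraction factor, absorbing the $4/3$ prefactor in~\eqref{eqn:saga-convergence}).

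The heart of the proof — and the part I expect to be the main obstacle — is relating the inner-problem distance $\Omega(\,\cdot\,-(\wtldoptr,\atldoptr))$ back to the original distance $\Omega(\,\cdot\,-(\wopt,\aopt))$ so that a clean one-step recursion in $\Omega(\wtldr-\wopt,\atldr-\aopt)$ emerges. The key lemma is a form of nonexpansiveness of the proximal-point map in the $\Omega$-metric: since $(\wopt,\aopt)$ is the saddle point of $\Lagr$ and the perturbation in~\eqref{eqn:Lagrangian-delta} is a quadratic centered at $(\wtldr,\atldr)$ that is $\delta$-strongly monotone relative to $\Omega$, the exact proximal update $(\wtldr,\atldr)\mapsto(\wtldoptr,\atldoptr)$ satisfies $\Omega(\wtldoptr-\wopt,\atldoptr-\aopt) \leq \frac{1}{1+\delta}\,\Omega(\wtldr-\wopt,\atldr-\aopt)$ (and similarly $\Omega(\wtldoptr-\wtldr,\,\cdot\,)$ is controlled). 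This requires exploiting the monotonicity of the saddle-point operator of $\Lagr$ together with the added strongly-monotone quadratic — essentially the standard contraction estimate for the proximal point algorithm, but carried out in the weighted norm $\Omega$ and keeping the primal/dual sign pattern of~\eqref{eqn:Lagrangian-delta} straight. Once this is in hand, I would combine it with the inner contraction via the triangle inequality in the $\sqrt{\Omega}$-seminorm: writing $a_r=\sqrt{\E\,\Omega(\wtldr-\wopt,\atldr-\aopt)}$ and $b_r = \sqrt{\E\,\Omega(\wtldr-\wtldoptr,\atldr-\atldoptr)}$-type quantities, the inner bound gives $\sqrt{\E\,\Omega(\wtldrp-\wtldoptr,\cdot)}\leq \tfrac{1}{2(1+\delta)}\,(\text{something}\lesssim a_r)$, the proximal-point lemma gives $\sqrt{\Omega(\wtldoptr-\wopt,\cdot)}\leq \tfrac{1}{\sqrt{1+\delta}}a_r$, and adding them yields $a_{r+1}\leq \big(1-\tfrac{1}{2(1+\delta)}\big)a_r$ after simplifying $\tfrac{1}{2(1+\delta)}+\tfrac{1}{\sqrt{1+\delta}}\cdot(\text{correction})$; squaring and iterating over $r$ gives the stated $(1-\tfrac{1}{2(1+\delta)})^{2r}$ rate. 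I would need to be careful that the inner solves only give expected (not almost-sure) bounds, so the recursion should be stated in terms of $\sqrt{\E[\cdot]}$ with Jensen applied in the right direction, and that the initialization of each inner solve is the previous round's output $(\wtldr,\atldr)$, matching the hypotheses of Theorems~\ref{thm:dscovr-svrg}--\ref{thm:dscovr-saga} which start from an arbitrary point.
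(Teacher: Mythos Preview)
Your overall architecture is exactly the paper's: apply Theorems~\ref{thm:dscovr-svrg}/\ref{thm:dscovr-saga} to the perturbed function~\eqref{eqn:Lagrangian-delta}, combine the inner contraction with a proximal-point contraction via Minkowski's inequality in the norm $\|z\|:=\sqrt{\Omega(w,\alpha)}$, and iterate. Your computation of the inner factor $(2/3)^{S/2}=\tfrac{1}{2(1+\delta)}$ (and the analogous SAGA count) is also correct.

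The gap is in the proximal-point lemma, which you correctly identify as the crux but then state with the wrong constant. You assert that the exact proximal step contracts by $\tfrac{1}{1+\delta}$ in $\Omega$, i.e.\ by $\tfrac{1}{\sqrt{1+\delta}}$ in $\|\cdot\|$. The paper proves instead the pair
\[
\|\ztldr-\ztldoptr\| \;\le\; \|\ztldr-\zopt\|,
\qquad
\|\ztldoptr-\zopt\| \;\le\; \frac{\delta}{1+\delta}\,\|\ztldr-\zopt\|,
\]
derived from the first-order optimality conditions for $\Lagr_\delta^{(r)}$ together with the $\lambda$- and $\gamma_i/m$-strong convexity/concavity of $\Lagr$ and Cauchy--Schwarz in the $\Omega$-weighted inner product. (Equivalently: the saddle operator of $\Lagr$ is $1$-strongly monotone in the $\Omega$-metric, and the proximal map is its resolvent with parameter $1/\delta$, whose contraction factor is $\tfrac{1}{1+1/\delta}=\tfrac{\delta}{1+\delta}$; the ``$\delta$-strong monotonicity of the perturbation'' is not what drives the contraction.) With the correct factor the arithmetic closes exactly,
\[
\bigl(\E\|\ztldrp-\zopt\|^2\bigr)^{1/2}
\;\le\; \frac{1}{2(1+\delta)}\,\|\ztldr-\ztldoptr\|
+ \|\ztldoptr-\zopt\|
\;\le\; \Bigl(\tfrac{1}{2(1+\delta)}+\tfrac{\delta}{1+\delta}\Bigr)\|\ztldr-\zopt\|
= \Bigl(1-\tfrac{1}{2(1+\delta)}\Bigr)\|\ztldr-\zopt\|,
\]
so no unspecified ``correction'' is needed. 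With your factor $\tfrac{1}{\sqrt{1+\delta}}$ the sum $\tfrac{1}{2(1+\delta)}+\tfrac{1}{\sqrt{1+\delta}}$ exceeds $1-\tfrac{1}{2(1+\delta)}$ for every $\delta$ below the golden ratio, and the recursion would not yield the stated rate. Note also that the first displayed inequality (nonexpansiveness relative to $\zopt$) is precisely what converts the right-hand side of the inner bound, which is in terms of $\|\ztldr-\ztldoptr\|$, into $\|\ztldr-\zopt\|$; you gesture at this but it should be stated as an explicit part of the lemma.
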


According to Theorem~\ref{thm:dscovr-accl},
in order to have 
$\E\bigl[\Omega\bigl(\wtldr-\wopt, \atldr-\aopt\bigr)\bigr] 
\leq\epsilon$, 
we need the number of rounds $r$ to satisfy
\[
r\geq (1+\delta)\log\biggl(\frac{\Omega\bigl(\wtldini-\wopt,
\atldini-\aopt\bigr)}{\epsilon}\biggr).
\]
Following the discussions in 
Sections~\ref{sec:dscovr-svrg} and~\ref{sec:dscovr-saga},
when using uniform sampling and assuming $m\leq n$, we can have
\begin{equation}\label{eqn:Gamma-delta}
\Gamma_{\delta} 
= n\left(1+\frac{9\condrand}{2(1+\delta)^2} \right) + mn.
\end{equation}
Then the total number of block coordinate updates in 
Algorithm~\ref{alg:accl-dscovr} is
\[
O\bigl( (1+\delta)\Gamma_\delta \log(1+\delta) \log(1/\epsilon) \bigr),
\]
where the $\log(1+\delta)$ factor comes from the number of 
stages~$S$ in option~1 and number of steps~$M$ in option~2.
We hide the $\log(1+\delta)$ factor with the $\widetilde{O}$ notation
and plug~\eqref{eqn:Gamma-delta} into the expression above to obtain
\[
\widetilde{O}\left(n\left((1+\delta)(1+m)+\frac{\condrand}{(1+\delta)} \right)
  \log\left(\frac{1}{\epsilon}\right)\right).
\]
Now we can choose~$\delta$ depending on the relative size of $\condrand$ 
and~$m$:
\begin{itemize}
\item
If $\condrand>1+m$, we can minimizing the above expression by choosing
$\delta = \sqrt{\frac{\condrand}{1+m}} - 1$,
so that the overall iteration complexity becomes
$\widetilde{O}\left(n\sqrt{m\condrand}\log(1/\epsilon)\right)$.
\item 
If $\condrand \leq m + 1$, then no acceleration is necessary and we can choose
$\delta=0$ to proceed with a single round.
In this case, the iteration complexity is $O(mn)$ as seen 
from~\eqref{eqn:Gamma-delta}.
\end{itemize}
Therefore, in either case, the total number of block iterations by
Algorithm~\ref{alg:accl-dscovr} can be written as
\begin{equation}\label{eqn:accl-iter-complexity}
\widetilde{O}\left(mn + n\sqrt{m\condrand}\log(1/\epsilon)\right).
\end{equation}
As discussed before, the total number of passes over the whole dataset
is obtained by dividing by~$mn$:
\[
\widetilde{O}\left(1 + \sqrt{\condrand/m}\log(1/\epsilon)\right).
\]
This is the computational complexity of accelerated DSCOVR listed in
Table~\ref{tab:comm-comp}.

\subsection{Proximal Mapping for Accelerated DSCOVR}
\label{sec:accl-prox}

When applying Algorithm~\ref{alg:dscovr-svrg} or~\ref{alg:dscovr-saga} to
approximate the saddle-point of~\eqref{eqn:Lagrangian-delta},
we need to replace the proximal mappings of $g_k(\cdot)$ and $f_i^*(\cdot)$
by those of 
$g_k(\cdot)+(\delta\lambda/2)\|\cdot-\wktldr\|^2$ and 
$f_i^*(\cdot)+(\delta\gamma_i/2)\|\cdot-\aitldr\|^2$, respectively.
More precisely, we replace
$
\wktp = \prox_{\tau_k g_k}\bigl(\wkt-\tau_k \vktp\bigr)
$
by
\begin{align}
\wktp 
&=\argmin_{\wk\in\R^{d_k}}\left\{g_k(\wk) 
+ \frac{\delta\lambda}{2} \bigl\|\wk-\wktldr\bigr\|^2 
+ \frac{1}{2\tau_k} \left\|\wk-\left(\wkt-\tau_k \vktp\right)\right\|^2 
\right\} \nonumber \\
&= \prox_{\frac{\tau_k}{1+\tau_k\delta\lambda} g_k}\left(
\frac{1}{1+\tau_k\delta\lambda}\left(\wkt-\tau_k \vktp\right)
+\frac{\tau_k\delta\lambda}{1+\tau_k\delta\lambda}\wktldr \right) ,
\label{eqn:accl-primal-prox-mapping}
\end{align}
and replace 
$
\aitp = \prox_{\sigma_i f_i^*}\bigl(\ait+\sigma_i\uitp\bigr) 
$
by
\begin{align}
\aitp 
&=\argmin_{\ai\in\R^{N_i}}\left\{f_i^*(\ai) 
+\frac{\delta\gamma_i}{2}\bigl\|\ai-\aitldr\bigr\|^2
+ \frac{1}{2\sigma_i} \left\|\ai-\left(\ait+\sigma_i\uitp\right)\right\|^2
\right\} \nonumber \\
&= \prox_{\frac{\sigma_i}{1+\sigma_i\delta\gamma_i} f_i^\ast}\left(
\frac{1}{1+\sigma_i\delta\gamma_i}\left(\ait+\sigma_i \uitp\right)
+\frac{\sigma_i\delta\gamma_i}{1+\sigma_i\delta\gamma_i}\aitldr \right) .
\label{eqn:accl-dual-prox-mapping}
\end{align}

We also examine the number of inner iterations determined by $\Gamma_\delta$
and how to set the step sizes.
If we choose $\delta = \sqrt{\frac{\condrand}{1+m}} - 1$,
then $\Gamma_\delta$ in~\eqref{eqn:Gamma-delta} becomes
\[
  \Gamma_{\delta} = n\left(1+\frac{9\condrand}{2(1+\delta)^2}\right) + mn
  = n\left(1+\frac{9\condrand}{2\condrand/(m+1)}\right) + mn
  = 5.5(m+1)n.
\]
Therefore a small constant number of passes is sufficient within each round.
Using the uniform sampling, the step sizes can be estimated as follows:
\begin{align}
\sigma_i &= \frac{1}{2(1+\delta)\gamma_i(p_i\Gamma_{\delta}-1)}
\approx \frac{1}{2\sqrt{\condrand/m}\gamma_i (5.5 n-1)} 
\approx \frac{1}{11\gamma_i n\sqrt{\condrand/m}}, \label{eqn:accl-sigma-i}\\
\tau_k &= \frac{1}{2(1+\delta)\lambda(q_k\Gamma_{\delta}-1)}
\approx \frac{1}{2\sqrt{\condrand/m}\lambda(5.5m-1)}
\approx \frac{1}{11\lambda \sqrt{m\cdot \condrand}}. \label{eqn:accl-tau-k}
\end{align}
As shown by our numerical experiments in Section~\ref{sec:experiments}, 
the step sizes can be set much larger in practice.

\section{Conjugate-Free DSCOVR Algorithms}
\label{sec:dual-free}

A major disadvantage of primal-dual algorithms for solving
problem~\eqref{eqn:erm-m} is the requirement of computing the proximal mapping
of the conjugate function $f_i^*$, 
which may not admit closed-formed solution or efficient computation.
This is especially the case for logistic regression, 
one of the most popular loss functions used in classification.

\citet{LanZhou2015RPDG} developed ``conjugate-free'' variants of primal-dual 
algorithms that avoid computing the proximal mapping of the conjugate functions.
The main idea is to replace the Euclidean distance in the dual proximal
mapping with a Bregman divergence defined over the conjugate function itself.
This technique has been used by \citet{WangXiao2017ICML} 
to solve structured ERM problems with primal-dual first order methods.
Here we use this approach to derive conjugate-free DSCOVR algorithms.
In particular, we replace the proximal mapping for the dual update
\[
\aitp = \prox_{\sigma_i f_i^*}\bigl(\ait+\sigma_i\uitp\bigr) 
= \argmin_{\ai\in\R^{n_i}}\Bigl\{f_i^*(\ai)
  -\bigl\langle \ai,\,\uitp \bigr\rangle
  +\frac{1}{2\sigma_i}\bigl\|\ai-\ait\bigr\|^2 \Bigr\},
\]
by
\begin{equation}\label{eqn:conj-prox-Bregman}
\aitp = \argmin_{\ai\in\R^{n_i}}\left\{f_i^*(\ai)
  -\bigl\langle \ai,\,\uitp \bigr\rangle
  +\frac{1}{\sigma_i}\Bdiv_i\bigl(\ai,\ait\bigr) \right\},
\end{equation}
where 
$
  \Bdiv_i(\ai,\ait) = f_i^*(\ai) - \bigl\langle \nabla f_i^*(\ait),\,
  \ai-\ait\bigr\rangle .
$
The solution to~\eqref{eqn:conj-prox-Bregman} is given by
\[
  \aitp = \nabla f_i\bigl(\bitp\bigr), 
\]
where $\bitp$ can be computed recursively by
\[
  \bitp = \frac{\bit + \sigma_i \uitp}{1+\sigma_i}, \qquad t\geq 0,
\]
with initial condition $\biini=\nabla f_i^*(\aiini)$
\citep[see][Lemma~1]{LanZhou2015RPDG}.
Therefore, in order to update the dual variables $\alpha_i$, we do not need 
to compute the proximal mapping for the conjugate function $f_i^*$; 
instead, taking the gradient of~$f_i$ at some easy-to-compute points 
is sufficient.
This conjugate-free update can be applied in Algorithms~\ref{alg:dscovr}, 
\ref{alg:dscovr-svrg} and~\ref{alg:dscovr-saga}.

For the accelerated DSCOVR algorithms, 
we repalce~\eqref{eqn:accl-dual-prox-mapping} by
\begin{align*}
\aitp &= \argmin_{\ai\in\R^{n_i}}\left\{f_i^*(\ai)
  -\bigl\langle \ai,\,\uitp \bigr\rangle
  +\frac{1}{\sigma_i}\Bdiv_i\bigl(\ai,\ait\bigr)
  +\delta\gamma\Bdiv_i\bigl(\ai,\aitld\bigr)\right\}.
\end{align*}
The solution to the above minimization problem can also be written as
\[
  \aitp = \nabla f_i\bigl(\beta_i^{(t+1)}\bigr), 
\]
where $\beta_i^{(t+1)}$ can be computed recursively as 
\[
  \beta_i^{(t+1)} = \frac{\beta_t^{(t)} + \sigma_i \uitp 
  + \sigma_i\delta\gamma \tilde{\beta_i}}{1+\sigma_i+\sigma_i\delta\gamma}, 
  \qquad t\geq 0,
\]
with the initialization
$\beta_i^{(0)} = \nabla f_i^*\bigl(\aiini\bigr)$ and
$\tilde{\beta_i} = \nabla f_i^*\bigl(\aitldr\bigr)$.

The convergence rates and computational complexities of the conjugate-free
DSCOVR algorithms are very similar to the ones given in 
Sections~\ref{sec:dscovr-svrg}--\ref{sec:dscovr-accl}.
We omit details here, but refer the readers to
\citet{LanZhou2015RPDG} and \citet{WangXiao2017ICML}
for related results.

\section{Asynchronous Distributed Implementation}
\label{sec:implementation}

In this section, we show how to implement the DSCOVR algorithms presented
in Sections~\ref{sec:dscovr-svrg}--\ref{sec:dual-free} in a distributed 
computing system. 
We assume that the system provide both synchronous collective communication
and asynchronous point-to-point communication, which are all supported by the
MPI standard \citep{MPIForum}.
Throughout this section, we assume $m<n$ (see Figure~\ref{fig:simul-parallel}).

\begin{figure}[t]
  \centering
  \ifpdf
  \includegraphics[width=0.78\textwidth]{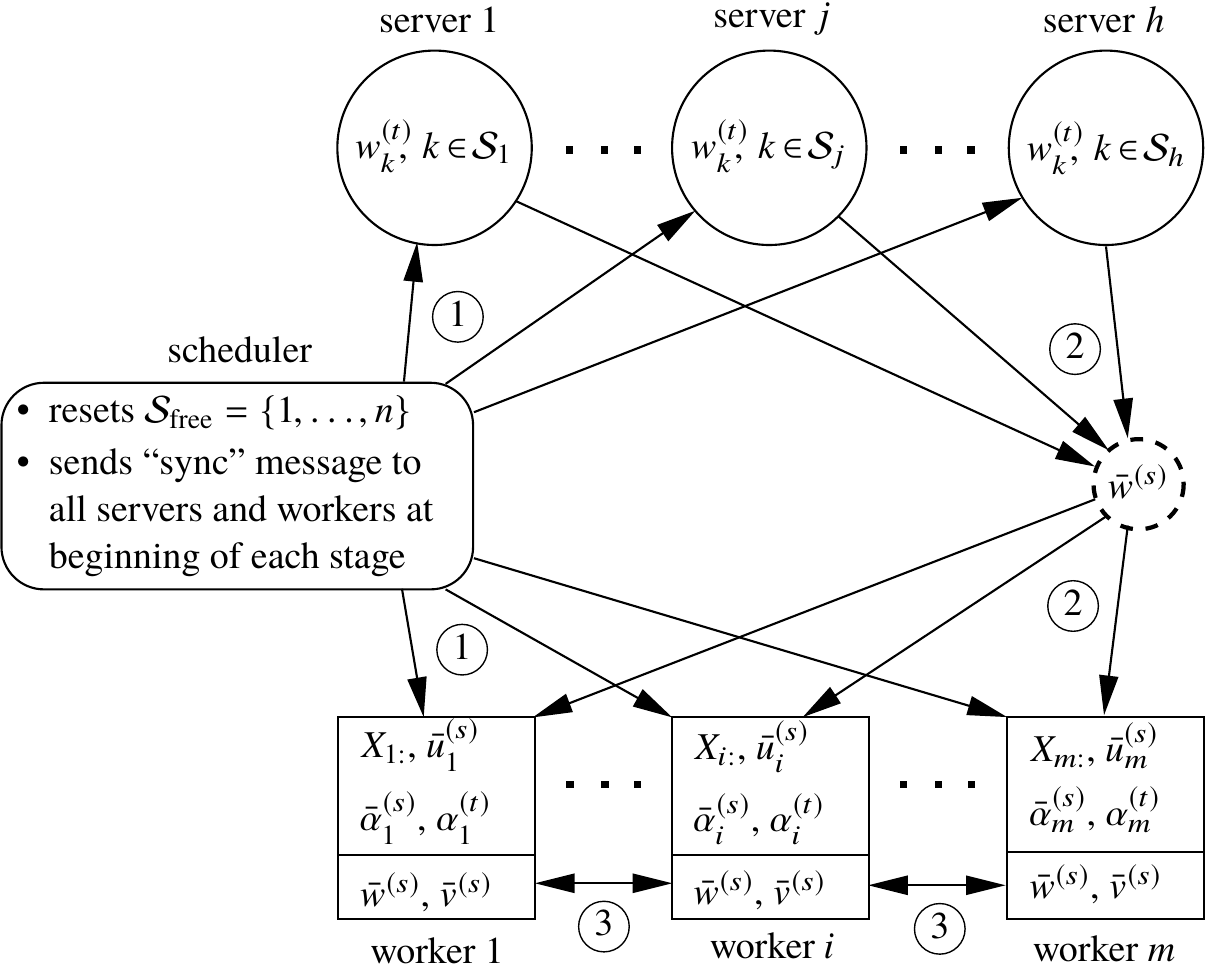}
  \else
  \input{drawings/dsvrg_sync_psfrag}
  \includegraphics[width=0.8\textwidth]{drawings/dsvrg_sync}
  \fi
  \vspace{2ex}
  \caption{A distributed system for implementing DSCOVR consists of
    $m$ workers, $h$ parameter servers, and one scheduler.
    The arrows labeled with the numbers~1, 2 and~3 represent three
    collective communications at the beginning of each stage
    in DSCOVR-SVRG.}
  \label{fig:dsvrg-sync}
\end{figure}

\subsection{Implementation of DSCOVR-SVRG}

In order to implement Algorithm~\ref{alg:dscovr-svrg}, 
the distributed system need to have the following components 
(see Figure~\ref{fig:dsvrg-sync}):
\begin{itemize} 
  \item \emph{$m$ workers}. 
    Each worker~$i$, for $i=1,\ldots,m$, stores the 
    following local data and variables :
    \begin{itemize}
      \item data matrix $\Xri \in \R^{N_i\times d}$.
      \item vectors in $\R^{N_i}$:  $\ubis$, $\ait$, $\abis$.
      \item vectors in $\R^d$: $\wbs$, $\vbs$.
      \item extra buffers for computation and communication: 
            $\ujtp$, $\vltp$, $\wlt$ and $\wltp$.
    \end{itemize}
  \item \emph{$h$ parameter servers}. 
    Each server~$j$ stores a subset of the blocks 
    $\bigl\{\wkt\in\R^{d_k}:k\in\mathcal{S}_j\bigr\}$,
    where $\mathcal{S}_1,\ldots,\mathcal{S}_h$
    form a partition of the set $\{1,\ldots,n\}$.
  \item \emph{one scheduler}. It maintains a set of block indices
    $\Ifree\subseteq\{1,\ldots,n\}$. 
    At any given time, $\Ifree$ contains indices of parameter blocks 
    that are not currently updated by any worker.
\end{itemize}
The reason for having $h>1$ servers is not about insufficient storage
for parameters, but rather to avoid the communication overload between
only one server and all~$m$ workers ($m$ can be in hundreds).

At the beginning of each stage~$s$, 
the following three collective communications take place across the system 
(illustrated in Figure~\ref{fig:dsvrg-sync} by arrows with circled
labels~1, 2 and~3):
\begin{enumerate} 
  \item[(1)] 
    The scheduler sends a ``sync'' message to all servers and workers, 
    and resets $\Ifree=\{1,\ldots,n\}$. 
  \item[(2)] 
    Upon receiving the ``sync'' message, the servers aggregate their blocks
    of parameters together to form $\wbs$ and send it to all workers
    (e.g., through the AllReduce operation in MPI).
  \item[(3)] 
    Upon receiving $\wbs$, each worker compute $\ubis=\Xri\wbs$ and 
    $(\Xri)^T\abis$, then invoke a collective communication (AllReduce)
    to compute $\vbs=(1/m)\sum_{i=1}^m(\Xri)^T\abis$.
\end{enumerate}
The number of vectors in $\R^d$ sent and received during the above process
is~$2m$, counting the communications to form $\wbs$ and $\vbs$ 
at~$m$ workers (ignoring the short ``sync'' messages). 

After the collective communications at the beginning of each stage, 
all workers start working on the inner iterations of 
Algorithm~\ref{alg:dscovr-svrg}
in parallel in an asynchronous, event-driven manner.
Each worker interacts with the scheduler and the servers in a four-step loop
shown in Figure~\ref{fig:dsvrg-update}.
There are always~$m$ iterations taking place concurrently
(see also Figure~\ref{fig:simul-parallel}), each may at a
different phase of the four-step loop:
\begin{enumerate}
\item[(1)] 
  Whenever worker~$i$ finishes updating a block~$k'$, it sends the pair 
  $(i,k')$ to the scheduler to request for another block to update. 
  At the beginning of each stage, $k'$ is not needed.
\item[(2)] When the scheduler receives the pair $(i,k')$, it randomly choose 
  a block~$k$ from the list of free blocks $\Ifree$ (which are not currently 
  updated by any worker), looks up for the server~$j$ which stores the 
  parameter block $\wkt$ (i.e., $\mathcal{S}_j\owns k$), 
  and then send the pair $(i,k)$ to server~$j$. In addition, the 
  scheduler updates the list $\Ifree$ by adding~$k'$ and deleting~$k$.
\item[(3)] When server~$j$ receives the pair $(i, k)$, it sends the vector 
  $\wkt$ to worker~$i$, and waits for receiving the updated version 
  $\wktp$ from worker~$i$.
\item[(4)] After worker~$i$ receives $\wkt$, it computes the updates $\ait$ 
  and $\wkt$ following steps~6-7 in Algorithm~\ref{alg:dscovr-svrg}, and then 
  send $\wktp$ back to server~$j$. At last, it assigns the value of~$k$
  to~$k'$ and send the pair $(i,k')$ to the scheduler, requesting the next
  block to work on.
\end{enumerate}
The amount of point-to-point communication required during the above process
is $2d_k$ float numbers, for sending and receiving $\wkt$ and $\wktp$
(we ignore the small messages for sending and receiving $(i,k')$ and $(i,k)$).
Since the blocks are picked randomly, the average amount of communication 
per iteration is $2d/n$, or equivalent to $2/n$ vectors in $\R^d$.
According to Theorem~\ref{thm:dscovr-svrg}, each stage of 
Algorithm~\ref{alg:dscovr-svrg} requires $\log(3)\Gamma$ inner iterations;
In addition, the discussions above~\eqref{eqn:iter-complexity-svrg} show that
we can take $\Gamma=n(1+(9/2)\condrand)$.
Therefore, the average amount of point-to-point communication within each stage
is $O(\condrand)$ vectors in $\R^d$.

\begin{figure}[t]
  \centering
  \ifpdf
  \includegraphics[width=0.99\textwidth]{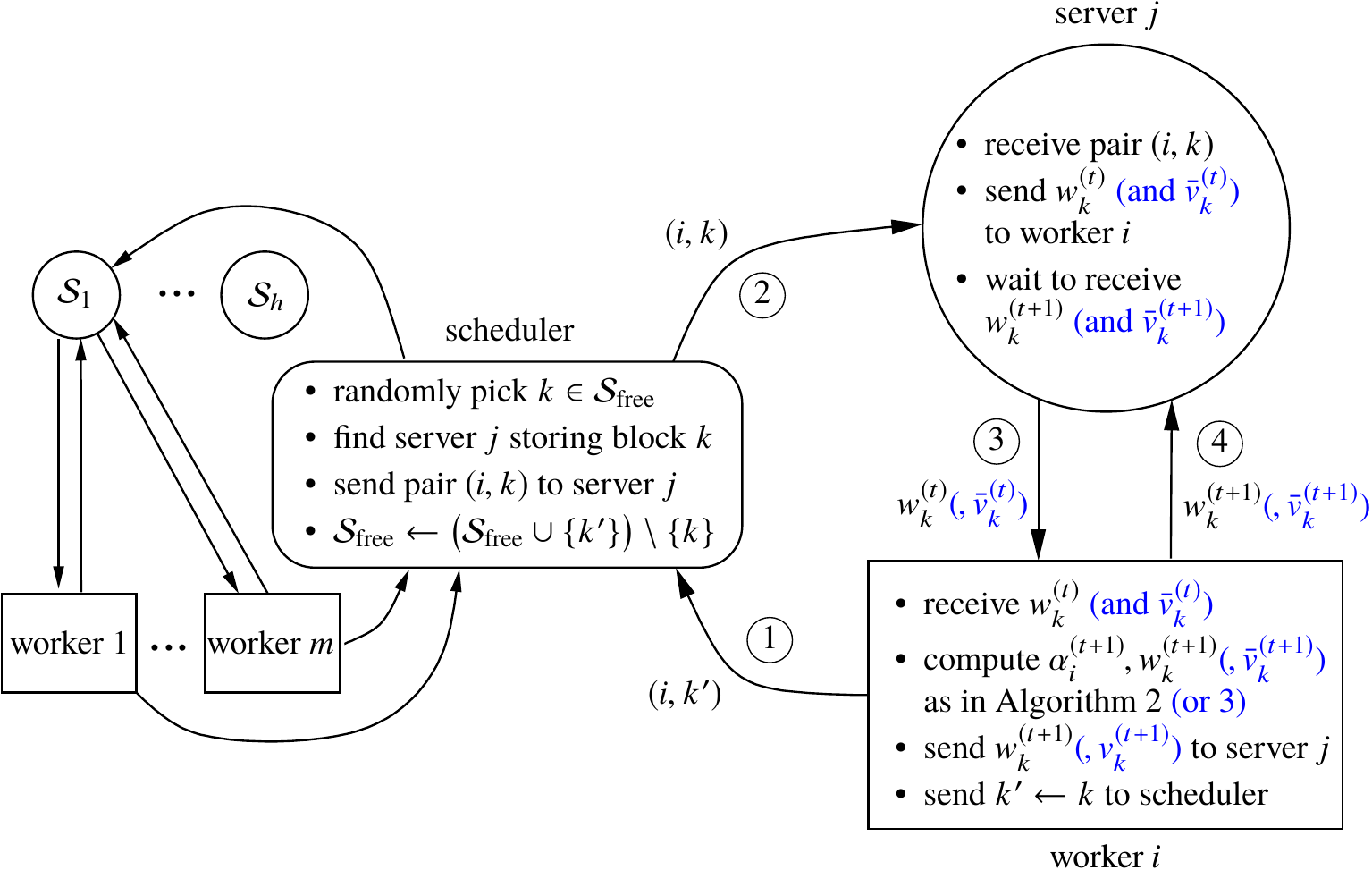}
  \else
  \input{drawings/dsaga_onestep_psfrag}
  \includegraphics[width=0.99\textwidth]{drawings/dscovr_onestep}
  \fi
  \vspace{2ex}
  \caption{Communication and computation processes for one
	inner iteration of DSCOVR-SVRG (Algorithm~\ref{alg:dscovr-svrg}). 
    The blue texts in the parentheses are the additional vectors required
    by DSCOVR-SAGA (Algorithm~\ref{alg:dscovr-saga}).
    There are always $m$ iterations taking place in parallel 
    asynchronously, each evolving around one worker.
    A server may support multiple (or zero) iterations if more than one 
    (or none) of its stored parameter blocks are being updated.
  }
  \label{fig:dsvrg-update}
\end{figure}

Now we are ready to quantify the communication complexity of
DSCOVR-SVRG to find an $\epsilon$-optimal solution.
Our discussions above show that each stage requires collective communication 
of $2m$ vectors in $\R^d$ and asynchronous point-to-point communication
of equivalently $\condrand$ such vectors.
Since there are total $O(\log(1/\epsilon))$ stages, the total
communication complexity is
\[
O\left((m+\condrand)\log(1/\epsilon)\right).
\]
This gives the communication complexity shown in Table~\ref{tab:comm-comp},
as well as its decomposition in Table~\ref{tab:sync-async}.

\subsection{Implementation of DSCOVR-SAGA}
\label{sec:implement-saga}

We can implement Algorithm~\ref{alg:dscovr-saga} using the same distributed
system shown in Figure~\ref{fig:dsvrg-sync}, but with some modifications
described below. First, the storage at different components are different:
\begin{itemize}\itemsep 0pt
    \item \emph{$m$ workers.} Each worker~$i$, for $i=1,\ldots,m$, 
      stores the following data and variables:
  \begin{itemize}\itemsep 0pt
      \item data matrix $\Xri \in \R^{N_i\times d}$
      \item vectors in $\R^{N_i}$: $\ait$, $\uit$, $\ubit$, 
            and $\Uikt$ for $k=1,\ldots,n$.
      \item vector in $\R^d$: 
            $V_{i:}^{(t)}=\bigl[V_{i1}^{(t)} \cdots V_{in}^{(t)}\bigr]^T$
            (which is the $i$th row of $\Vt$, with $\Vikt\in\R^{1\times d_k}$).
      \item buffers for communication and update of
            $\wkt$ and $\vbkt$ (both stored at some server).
    \end{itemize}
  \item \emph{$h$ servers}. Each server~$j$ stores a subset of blocks 
        $\bigl\{\wkt\!\!,\,\vbkt\in\R^{d_k}:k\in\mathcal{S}_j\bigr\}$,
        for $j=1,\ldots,n$.
  \item \emph{one scheduler}. It maintains the set of indices
    $\Ifree\subseteq\{1,\ldots,n\}$, same as in DSCOVR-SVRG.
\end{itemize}
Unlike DSCOVR-SVRG, there is no stage-wise ``sync'' messages.
All workers and servers work in parallel asynchronously all the time, 
following the four-step loops illustrated in Figure~\ref{fig:dsvrg-update}
(including blue colored texts in the parentheses).
Within each iteration, the main difference from DSCOVR-SVRG is that, 
the server and worker need to exchange two vectors of length~$d_k$:
$\wkt$ and $\vkt$ and their updates. This doubles the amount of 
point-to-point communication, and the average amount of communication
per iteration is $4/n$ vectors of length~$d$.
Using the iteration complexity in~\eqref{eqn:iter-complexity-saga},
the total amount of communication required (measured by number of
vectors of length~$d$) is
\[
  O\left((m+\condrand)\log(1/\epsilon)\right),
\]
which is the same as for DSCOVR-SVRG.
However, its decomposition into synchronous and asynchronous communication
is different, as shown in Table~\ref{tab:sync-async}.
If the initial vectors $\wini\neq 0$ or $\aini\neq 0$, then one round
of collective communication is required to propagate the initial conditions
to all servers and workers, which reflect the $O(m)$ synchronous 
communication in Table~\ref{tab:sync-async}.

\subsection{Implementation of Accelerated DSCOVR}

Implementation of the accelerated DSCOVR algorithm is very similar to 
the non-accelerated ones.
The main differences lie in the two proximal mappings 
presented in Section~\ref{sec:accl-prox}.
In particular, the primal update in~\eqref{eqn:accl-primal-prox-mapping} 
needs the extra variable $\wktldr$, which should be stored at a parameter
server together with $\wkt$.
We modify the four-step loops shown in Figures~\ref{fig:dsvrg-update}
as follows:
\begin{itemize} \itemsep 0pt
\item Each parameter server~$j$ stores the extra block parameters 
$\bigl\{\wktldr, k\in\mathcal{S}_j\bigr\}$. 
During step~(3), $\wktldr$ is send together with $\wkt$ (for SVRG)
or $(\wkt,\vkt)$ (for SAGA) to a worker.
\item In step (4), no update of $\wktldr$ is sent back to the server.
Instead, whenever switching rounds, the scheduler will inform
each server to update their $\wktldr$ to the most recent $\wkt$.
\end{itemize}
For the dual proximal mapping in~\eqref{eqn:accl-dual-prox-mapping}, 
each worker~$i$ needs to store an extra vector $\aitldr$, and reset it to
the most recent $\ait$ when moving to the next round.
There is no need for additional synchronization or collective communication 
when switching rounds in Algorithm~\ref{alg:accl-dscovr}.
The communication complexity 
(measured by the number of vectors of length~$d$ sent or received)
can be obtained by dividing the iteration complexity 
in~\eqref{eqn:accl-iter-complexity} by~$n$, i.e., 
$O\bigl((m+\sqrt{m\condrand})\log(1/\epsilon)\bigr)$,
as shown in Table~\ref{tab:comm-comp}.

Finally, in order to implement the conjugate-free DSCOVR algorithms described
in Section~\ref{sec:dual-free}, each worker~$i$ simply need to maintain and 
update an extra vector $\bit$ locally.

\section{Experiments}
\label{sec:experiments}

In this section, we present numerical experiments on an industrial distributed
computing system.
This system has hundreds of computers connected by high speed Ethernet in a
data center. 
The hardware and software configurations for each machine are listed in
Table~\ref{tab:hardware}.
We implemented all DSCOVR algorithms presented in this paper, including the 
SVRG and SAGA versions, their accelerated variants, as well as the 
conjugate-free algorithms.
All implementations are written in C++, using MPI for both collective and 
point-to-point communications 
(see Figures~\ref{fig:dsvrg-sync} and~\ref{fig:dsvrg-update} respectively).
On each worker machine, we also use OpenMP \citep{OpenMP}
to exploit the multi-core architecture for parallel computing,
including sparse matrix-vector multiplications
and vectorized function evaluations.

\begin{table}[t]
  \centering
  \begin{tabular}{c|c|c|c|c}
   \hline
   CPU & \#cores & RAM & network & operating system \\
   \hline
   dual Intel\textsuperscript{\textregistered} 
   Xeon\textsuperscript{\textregistered} processors & 
   16 & 128 GB & 10 Gbps & 
   Windows\textsuperscript{\textregistered} Server \\
   E5-2650 (v2), ~2.6 GHz & & ~~~1.8 GHz & Ethernet adapter & (version 2012)\\
   \hline
  \end{tabular}
  \caption{Configuration of each machine in the distributed computing system.}
  \label{tab:hardware}
\end{table}

Implementing the DSCOVR algorithms requires $m+h+1$ machines, 
among them~$m$ are workers with local datasets, 
$h$ are parameter servers, and one is a scheduler
(see Figure~\ref{fig:dsvrg-sync}).
We focus on solving the ERM problem~\eqref{eqn:erm-N},
where the total of~$N$ training examples are evenly partitioned and 
stored at~$m$ workers. 
We partition the $d$-dimensional parameters into $n$ subsets of 
roughly the same size (differ at most by one), where each subset consists of
randomly chosen coordinates (without replacement). 
Then we store the~$n$ subsets of parameters on~$h$ servers, each getting 
either $\lfloor n/h\rfloor $ or $\lceil n/h \rceil$ subsets. 
As described in Section~\ref{sec:implementation},
we make the configurations to satisfy $n>m>h\geq 1$.

For DSCOVR-SVRG and DSCOVR-SAGA, the step sizes 
in~\eqref{eqn:simple-step-size-R} are very conservative.
In the experiments, we replace the coefficient $1/9$ 
by two tuning parameter~$\eta_\mathrm{d}$ and $\eta_\mathrm{p}$
for the dual and primal step sizes respectively, i.e.,
\begin{equation}\label{eqn:dscovr-step-sizes}
  \sigma_i = \eta_\mathrm{d} \frac{\lambda}{R^2}\cdot \frac{m}{N}, \qquad
  \tau_k = \eta_\mathrm{p} \frac{\nu}{R^2}.
\end{equation}
For the accelerated DSCOVR algorithms, we use $\condrand=R^2/(\lambda\nu)$ 
as shown in~\eqref{eqn:erm-condrand} for ERM.
Then the step sizes in~\eqref{eqn:accl-sigma-i} and~\eqref{eqn:accl-tau-k}, 
with $\gamma_i=(m/N)\nu$ and a generic constant coefficient~$\eta$, become
\begin{equation}\label{eqn:accl-dscovr-step-sizes}
\sigma_i = \frac{\eta_\mathrm{d}}{nR}\sqrt{\frac{m\lambda}{\nu}}
\cdot\frac{m}{N},  \qquad
\tau_k = \frac{\eta_\mathrm{p}}{R}\sqrt{\frac{\nu}{m\lambda}}.
\end{equation}
For comparison, we also implemented the following first-order methods for
solving problem~\ref{eqn:erm-m}:
\begin{itemize} \itemsep 0pt
  \item PGD: parallel implementation of the Proximal Gradient Descent method
    (using synchronous collective communication over $m$ machines).
    We use the adaptive line search procedure proposed in 
    \citet{Nesterov13composite}, and the exact form used is Algorithm~2 
    in \citet{LinXiao2015homotopy}.
  \item APG: parallel implementation of the Accelerated Proximal Gradient 
    method \citep{Nesterov04book,Nesterov13composite}.
    We use a similar adaptive line search scheme to the one for PGD,
    and the exact form used (with strong convexity) is
    Algorithm~4 in \citet{LinXiao2015homotopy}.
  \item ADMM: the Alternating Direction Method of Multipliers.
    We use the regularized consensus version 
    in \citet[][Section~7.1.1]{Boyd2011ADMM}. 
    For solving the local optimization problems at each node, we use 
    the SDCA method \citep{SSZhang13SDCA}.
  \item CoCoA+: the adding version of CoCoA in \citet{CoCoA2015ICML}.
    Following the suggestion in~\citet{CoCoA2017arbitrary},
    we use a randomized coordinate descent algorithm 
    \citep{Nesterov12rcdm,RichtarikTakac12} for solving the local
    optimization problems. 
\end{itemize}
These four algorithms all require~$m$ workers only. 
Specifically, we use the AllReduce call in MPI for the collective 
communications so that a separate master machine is not necessary.

\begin{table}[t]
  \centering
  \begin{tabular}{l|r|r|r}
   \hline
   Dataset & \#instances ($N$) & \#features ($d$) & \#nonzeros \\
   \hline
   \texttt{rcv1-train}   &    677,399 &     47,236 &      49,556,258 \\
   \texttt{webspam}     &    350,000 & 16,609,143 &   1,304,697,446 \\
   \texttt{splice-site} & 50,000,000 & 11,725,480 & 166,167,381,622 \\
   \hline
  \end{tabular}
  \caption{Statistics of three datasets. Each feature vector is normalized
to have unit norm.}
  \label{tab:data-stat}
\end{table}

We conducted experiments on three binary classification datasets
obtained from the collection maintained by \citet{LIBSVMdata}.
Table~\ref{tab:data-stat} lists their sizes and dimensions.
In our experiments, we used two configurations: one with $m=20$ and $h=10$
for two relatively small datasets, \texttt{rcv1-train} and \texttt{webspam},
and the other with $m=100$ and $h=20$ for the large dataset 
\texttt{splice-site}.

For \texttt{rcv1-train}, we solve the ERM problem~\eqref{eqn:erm-N}
with a smoothed hinge loss defined as
\[
\phi_j(t) = \left\{\begin{array}{ll} 
0                      & \mbox{if}~y_j t\geq 1,\\
\frac{1}{2}-y_j t      & \mbox{if}~ y_j t\leq 0,\\
\frac{1}{2}(1-y_j t)^2 & \mbox{otherwise},
\end{array}\right.
\qquad\mbox{and}\qquad
\phi_j^*(\beta) = \left\{\begin{array}{ll}
y_j\beta + \frac{1}{2}\beta^2 & \mbox{if}~ -1\leq y_j\beta \leq 0, \\
+\infty & \mbox{otherwise}.
\end{array}\right.
\]
for $j=1,\ldots,N$. This loss function is 1-smooth, therefore $\nu=1$;
see discussion above~\eqref{eqn:erm-condrand}.
We use the $\ell_2$ regularization $g(w)=(\lambda/2)\|w\|^2$.
Figures~\ref{fig:rcv1train4} and~\ref{fig:rcv1train6} show the reduction
of the primal objective gap $P(\wt)-P(\wopt)$ by different algorithms, 
with regularization parameter $\lambda=10^{-4}$ and $\lambda=10^{-6}$ 
respectively. 
All started from the zero initial point.
Here the~$N$ examples are randomly shuffled and then divided into~$m$ subsets.
The labels SVRG and SAGA mean DSCOVR-SVRG and DSCOVR-SAGA, respectively,
and A-SVRG and A-SAGA are their accelerated versions. 

Since PGD and APG both use adaptive line search, there is no parameter to tune.
For ADMM, we manually tuned the penalty parameter~$\rho$ 
\citep[see][Section~7.1.1]{Boyd2011ADMM} to obtain good performance:
$\rho=10^{-5}$ in Figure~\ref{fig:rcv1train4} 
and $\rho=10^{-6}$ in Figure~\ref{fig:rcv1train6}.
For CoCoA+, two passes over the local datasets using a randomized coordinate
descent method are sufficient for solving the local optimization problem
(more passes do not give meaningful improvement).
For DSCOVR-SVRG and SAGA, we used $\eta_\mathrm{p}=\eta_\mathrm{d}=20$ 
to set the step sizes in~\eqref{eqn:dscovr-step-sizes}.
For DSCOVR-SVRG, each stage goes through the whole dataset~$10$ times, i.e., 
the number of inner iterations in Algorithm~\ref{alg:dscovr-svrg} is $M=10mn$.
For the accelerated DSCOVR algorithms, better performance are obtained with
small periods to update the proximal points and we set it to be every 
$0.2$ passes over the dataset, i.e., $0.2mn$ inner iterations.
For accelerated DSCOVR-SVRG, we set the stage period (for variance reduction)
to be $M=mn$, which is actually longer than the period for updating the 
proximal points.

From Figures~\ref{fig:rcv1train4} and~\ref{fig:rcv1train6}, we observe that
the two distributed algorithms based on model averaging, ADMM and CoCoA+, 
converges relatively fast in the beginning but becomes very slow in the 
later stage.
Other algorithms demonstrate more consistent linear convergence rates.
For $\lambda=10^{-4}$, the DSCOVR algorithms are very competitive compared
with other algorithms.
For $\lambda=10^{-6}$, the non-accelerated DSCOVR algorithms become very slow,
even after tuning the step sizes.
But the accelerated DSCOVR algorithms are superior 
in terms of both number of passes over data and wall-clock time
(with adjusted step size coefficient $\eta_\mathrm{p}=10$ 
and $\eta_\mathrm{d}=40$).

For ADMM and CoCoA+, each marker represents the finishing of one iteration.
It can be seen that they are mostly evenly spaced in terms of number of passes
over data, but have large variations in terms of wall-clock time.
The variations in time per iteration are due to resource sharing with other 
jobs running simultaneously on the distributed computing cluster.
Even if we have exclusive use of each machine, 
sharing communications with other jobs over the Ethernet is unavoidable.
This reflects the more realistic environment in cloud computing.

\begin{figure}[t]
  \centering
  \includegraphics[width=0.49\textwidth]{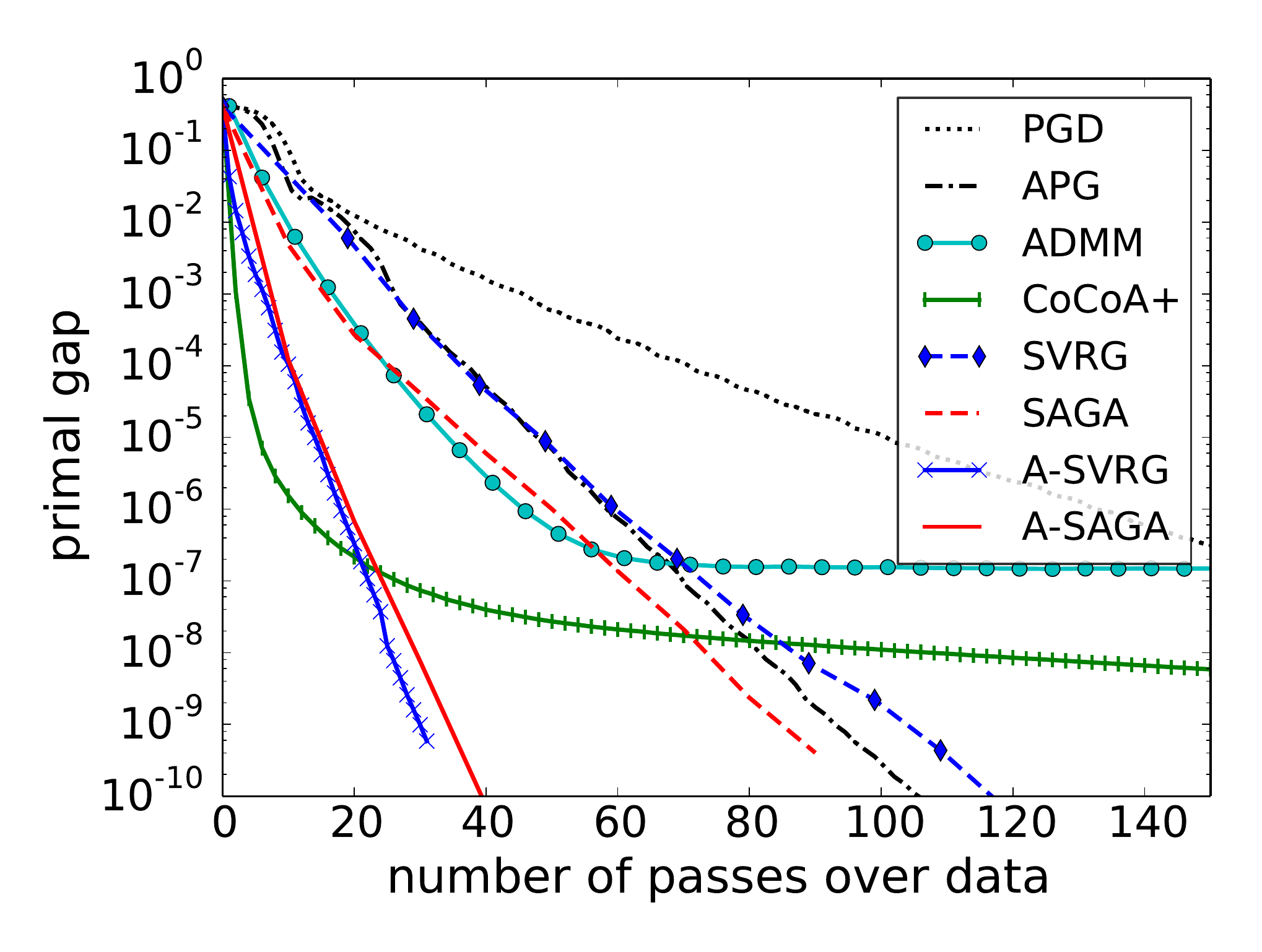}
  \includegraphics[width=0.49\textwidth]{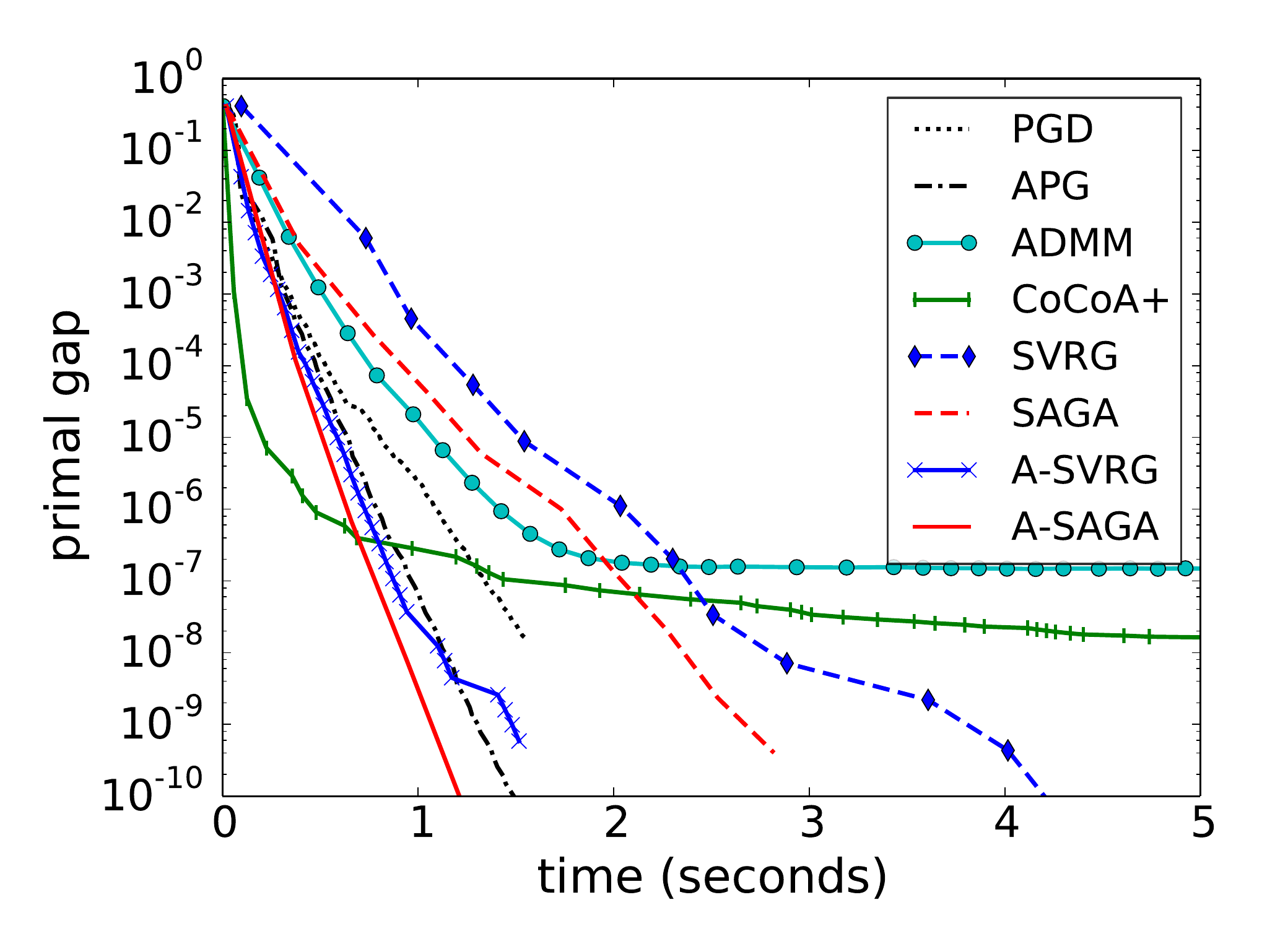}
  \caption{\texttt{rcv1-train}: smoothed-hinge loss, $\lambda\!=\!10^{-4}$,
            randomly shuffled, $m\!=\!20$, $n\!=\!37$, $h\!=\!10$.}
  \label{fig:rcv1train4}
\end{figure}

\begin{figure}[t]
  \centering
  \includegraphics[width=0.49\textwidth]{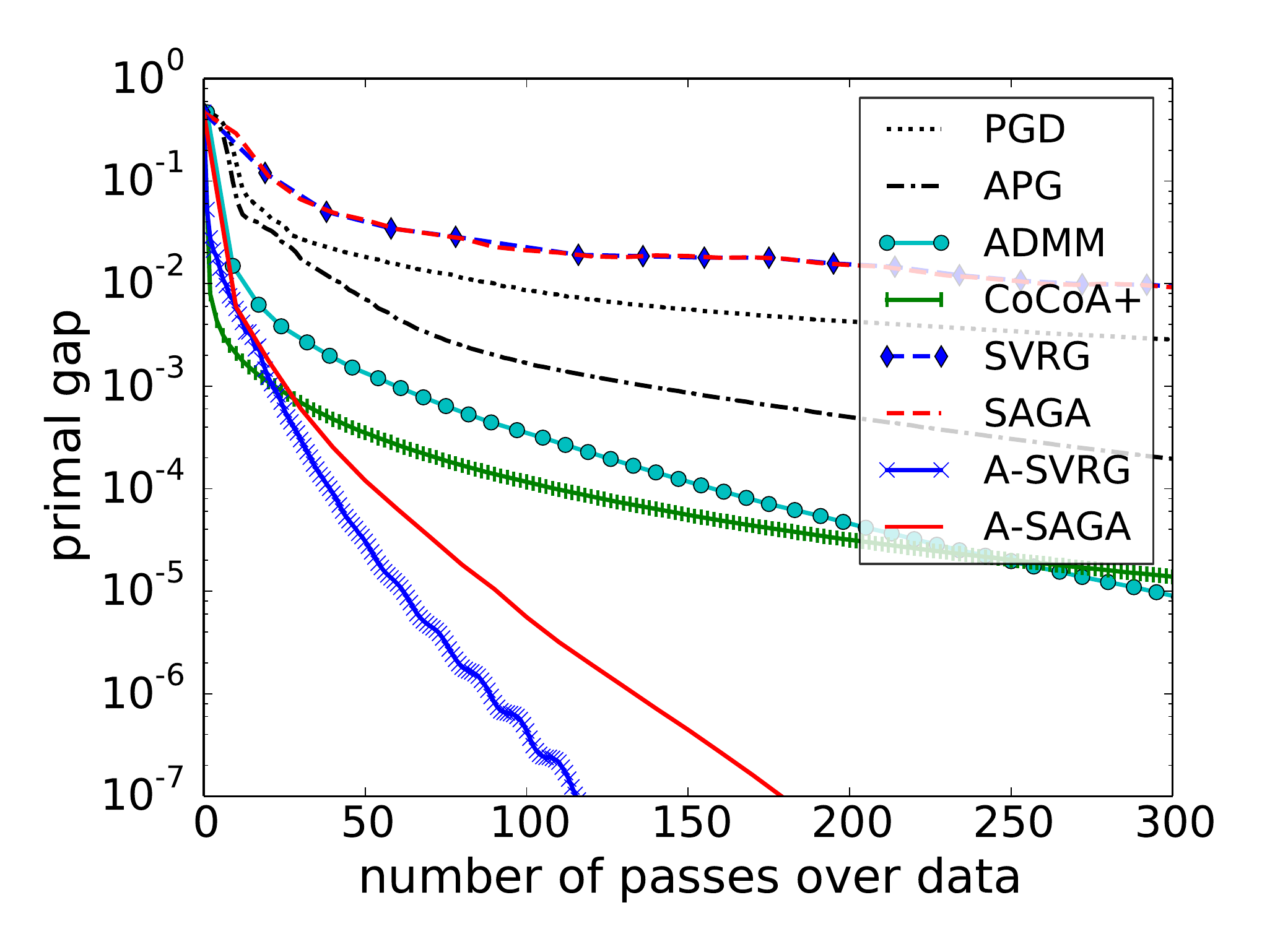}
  \includegraphics[width=0.49\textwidth]{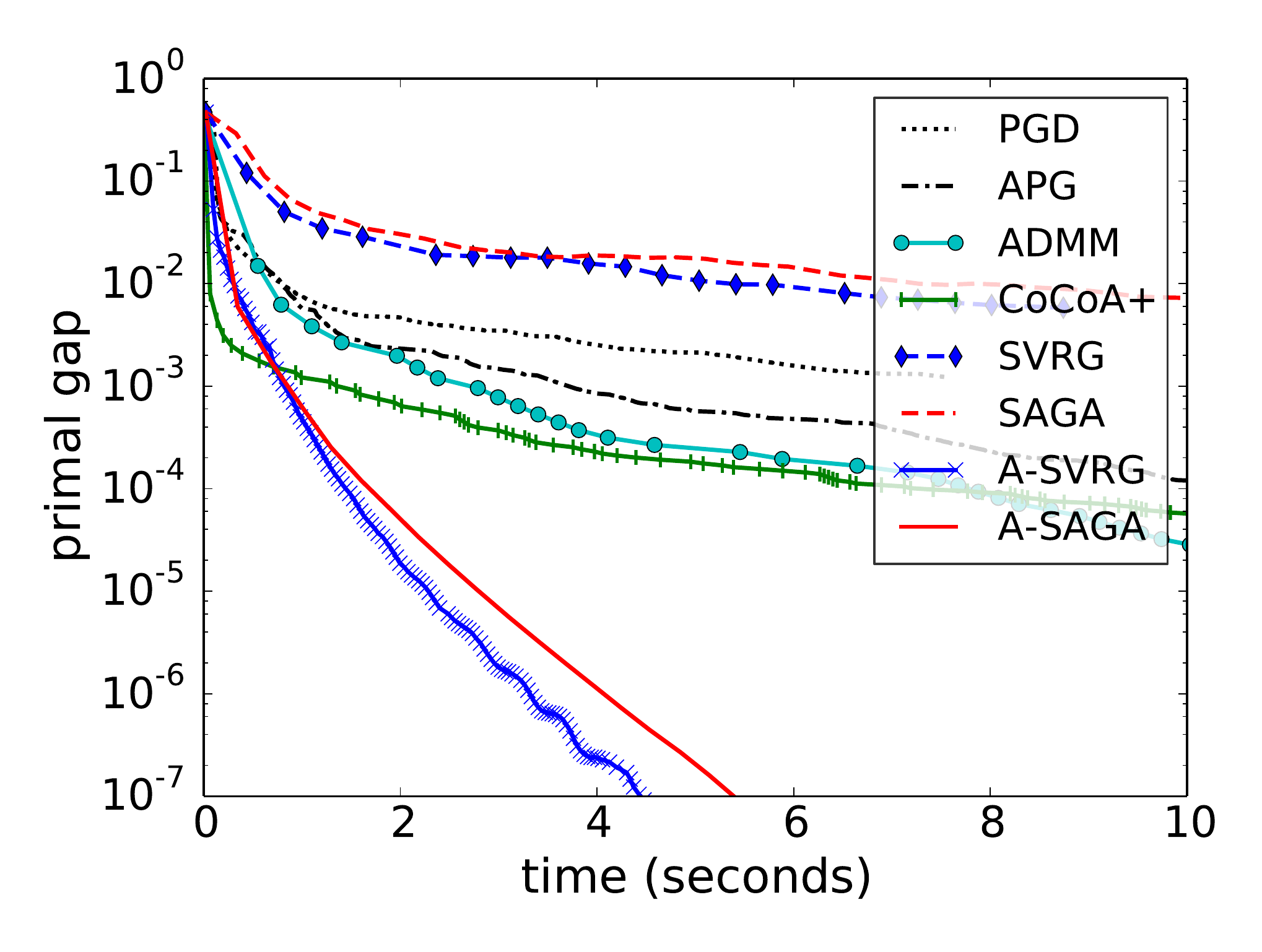}
  \caption{\texttt{rcv1-train}: smoothed-hinge loss, $\lambda\!=\!10^{-6}$,
            randomly shuffled, $m\!=\!20$, $n\!=\!37$, $h\!=\!10$.}
  \label{fig:rcv1train6}
\end{figure}

For the \texttt{webspam} dataset, we solve the ERM problem with logistic
loss $\phi_j(t)=\log(1+\exp(-y_j t))$ where $y_j\in\{\pm 1\}$.
The logistic loss is $1/4$-smooth, so we have $\nu=4$.
Since the proximal mapping of its conjugate $\phi_j^*$ does not have a 
closed-form solution, we used the conjugate-free DSCOVR algorithms
described in Section~\ref{sec:dual-free}.
Figures~\ref{fig:webspam4ini} and~\ref{fig:webspam6ini} shows the 
reduction of primal objective gap by different algorithms, for
$\lambda=10^{-4}$ and $\lambda=10^{-6}$ respectively.
Here the starting point is no longer the all-zero vectors.
Instead, each machine~$i$ first computes a local solution by minimizing
$f_i(X_i w)+g(w)$, and then compute their average using an AllReduce 
operation. Each algorithm starts from this average point.
This averaging scheme has been proven to be very effective to warm
start distributed algorithms for ERM \citep{ZhangDuchi2013averaging}.
In addition, it can be shown that when starting from the zero initial point, 
the first step of CoCoA+ computes exactly such an averaged point.

\begin{figure}[t]
  \centering
  \includegraphics[width=0.49\textwidth]{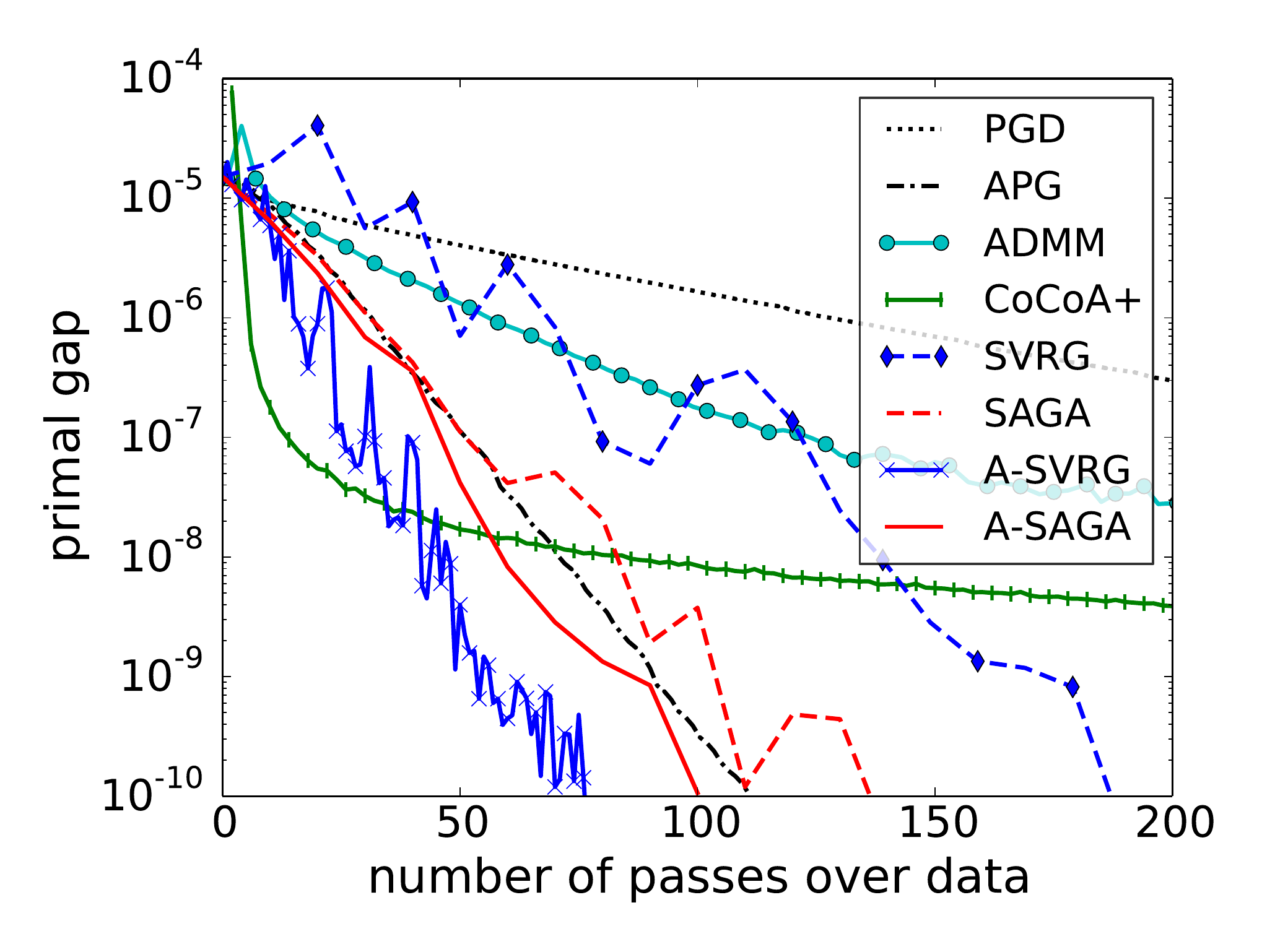}
  \includegraphics[width=0.49\textwidth]{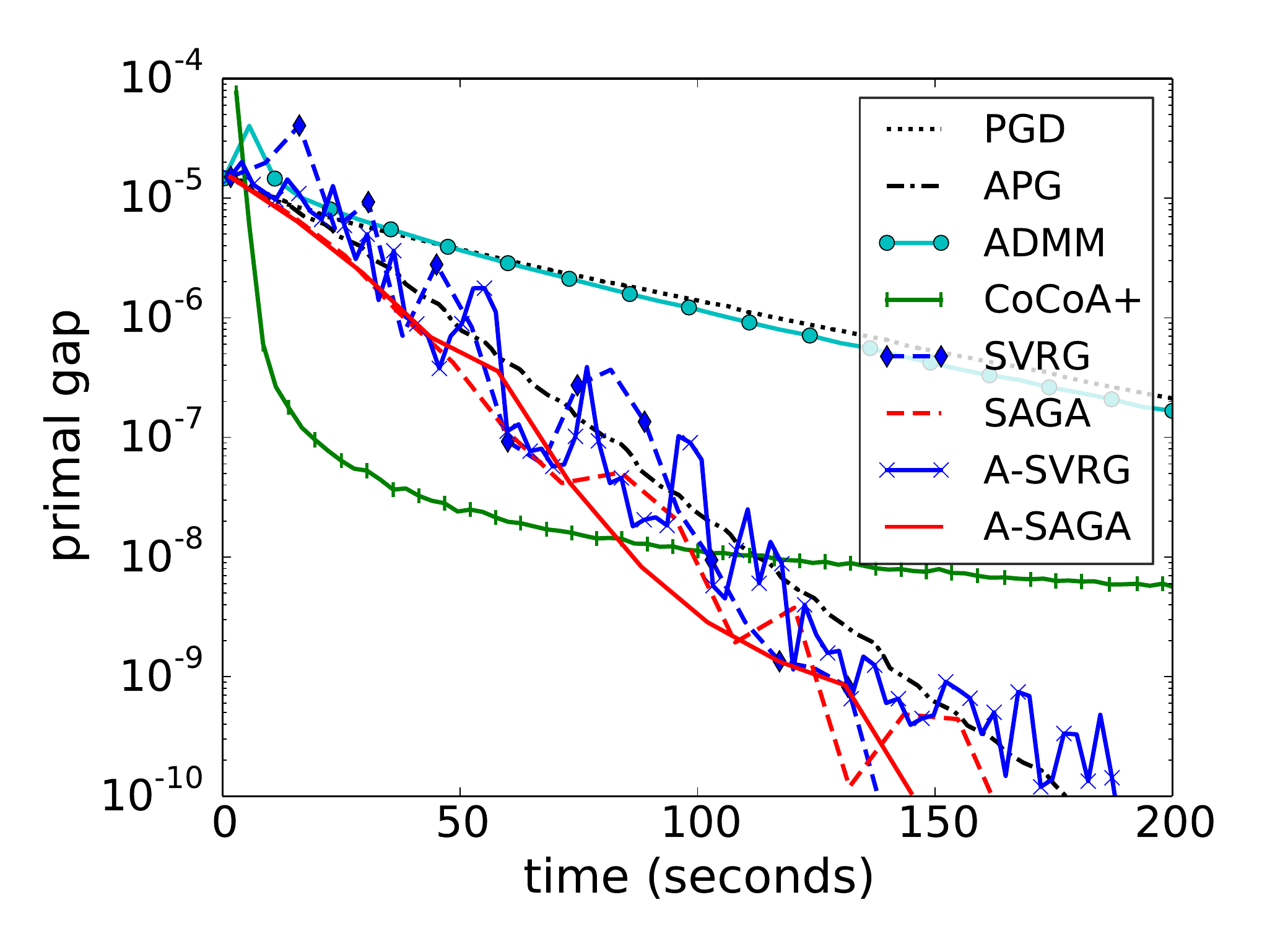}
  \vspace{-3ex}
  \caption{\texttt{webspam}: logistic regression, $\lambda=10^{-4}$, 
            randomly shuffled, $m=20$, $n=50$, $h=10$.}
  \label{fig:webspam4ini}
\end{figure}

\begin{figure}[t]
  \centering
  \includegraphics[width=0.49\textwidth]{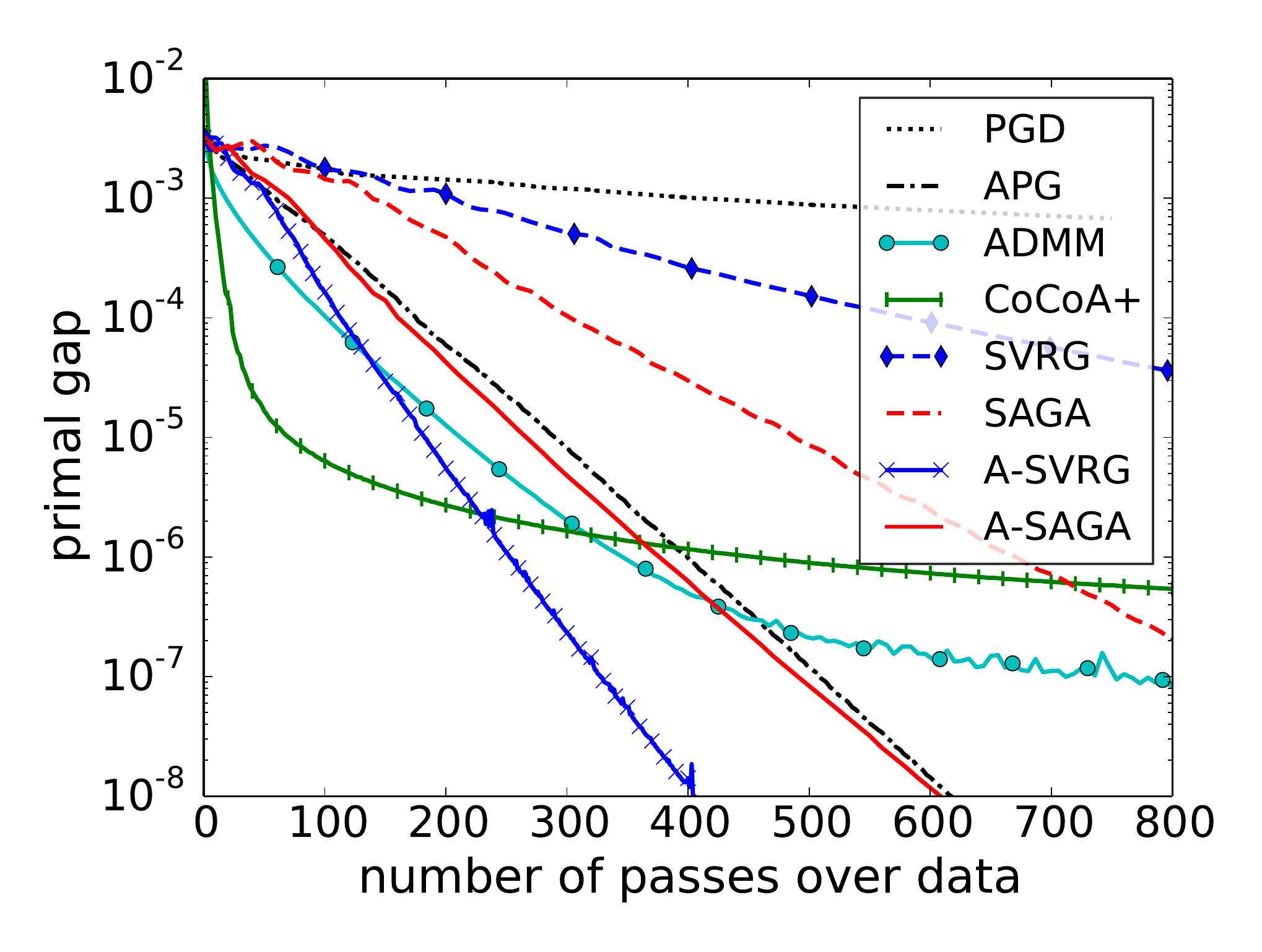}
  \includegraphics[width=0.49\textwidth]{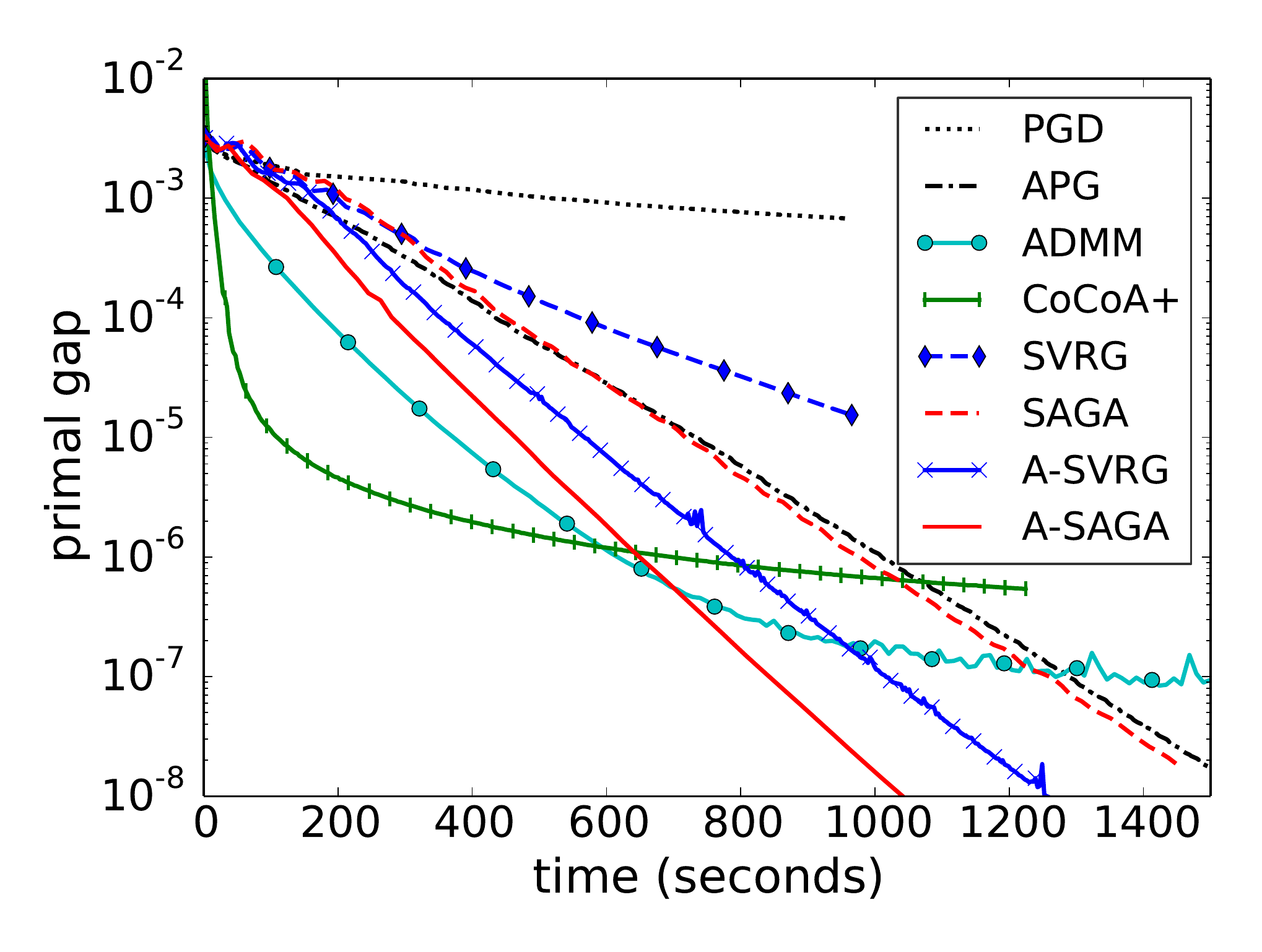}
  \caption{\texttt{webspam}: logistic regression, $\lambda=10^{-6}$, 
            randomly shuffled, $m=20$, $n=50$, $h=10$.}
  \label{fig:webspam6ini}
\end{figure}

From Figures~\ref{fig:webspam4ini} and~\ref{fig:webspam6ini}, we again
observe that CoCoA+ has very fast convergence in the beginning but converges
very slowly towards higher precision.
The DSCOVR algorithms, especially the accelerated variants, 
are very competitive in terms of both number of iterations and wall-clock time.

In order to investigate the fast initial convergence of CoCoA+ and ADMM, 
we repeated the experiments on \texttt{webspam} without random shuffling.
More specifically, we sorted the~$N$ examples by their labels,
and then partitioned them into~$m$ subsets sequentially. 
That is, most of the machines have data with only $+1$ or $-1$ labels, and only
one machine has mixed $\pm 1$ examples.
The results are shown in Figures~\ref{fig:webspamSort4ini}
and~\ref{fig:webspamSort6ini}.
Now the fast initial convergence of CoCoA+ and ADMM disappeared.
In particular, CoCoA+ converges with very slow linear rate.
This shows that statistical properties of random shuffling of the dataset 
is the main reason for the fast initial convergence
of model-averaging based algorithms such as CoCoA+ and ADMM
\citep[see, e.g.,][]{ZhangDuchi2013averaging}.

On the other hand,  this should not have
any impact on PGD and APG, because their iterations are computed over the
whole dataset, which is the same regardless of random shuffling or sorting.
The differences between the plots for PGD and APG in 
Figures~\ref{fig:webspam4ini} and~\ref{fig:webspamSort4ini}
(also for Figures~\ref{fig:webspam6ini} and~\ref{fig:webspamSort6ini})
are due to different initial points computed through averaging local solutions,
which does depends on the distribution of data at different machines.

Different ways for splitting the data over the~$m$ workers also affect
the DSCOVR algorithms. 
In particular, the non-accelerated DSCOVR algorithms become
very slow, as shown in Figures~\ref{fig:webspamSort4ini}
and~\ref{fig:webspamSort6ini}.
However, the accelerated DSCOVR algorithms are still very competitive
against the adaptive APG.
The accelerated DSCOVR-SAGA algorithm performs best.
In fact, the time spent by accelerated DSCOVR-SAGA should be even less than
shown in Figures~\ref{fig:webspamSort4ini} and~\ref{fig:webspamSort6ini}.
Recall that other than the initialization with non-zero starting point,
DSCOVR-SAGA is completely asynchronous and does not need any collective
communication (see Section~\ref{sec:implement-saga}).
However, in order to record the objective function for the purpose of
plotting its progress, we added collective communication and computation
to evaluate the objective value for every 10 passes over the data.
For example, in Figure~\ref{fig:webspamSort6ini}, such extra collective 
communications take about 160 seconds (about 15\% of total time)
for accelerated DSCOVR-SAGA,
which can be further deducted from the horizontal time axis.

\begin{figure}[t]
  \centering
  \includegraphics[width=0.49\textwidth]{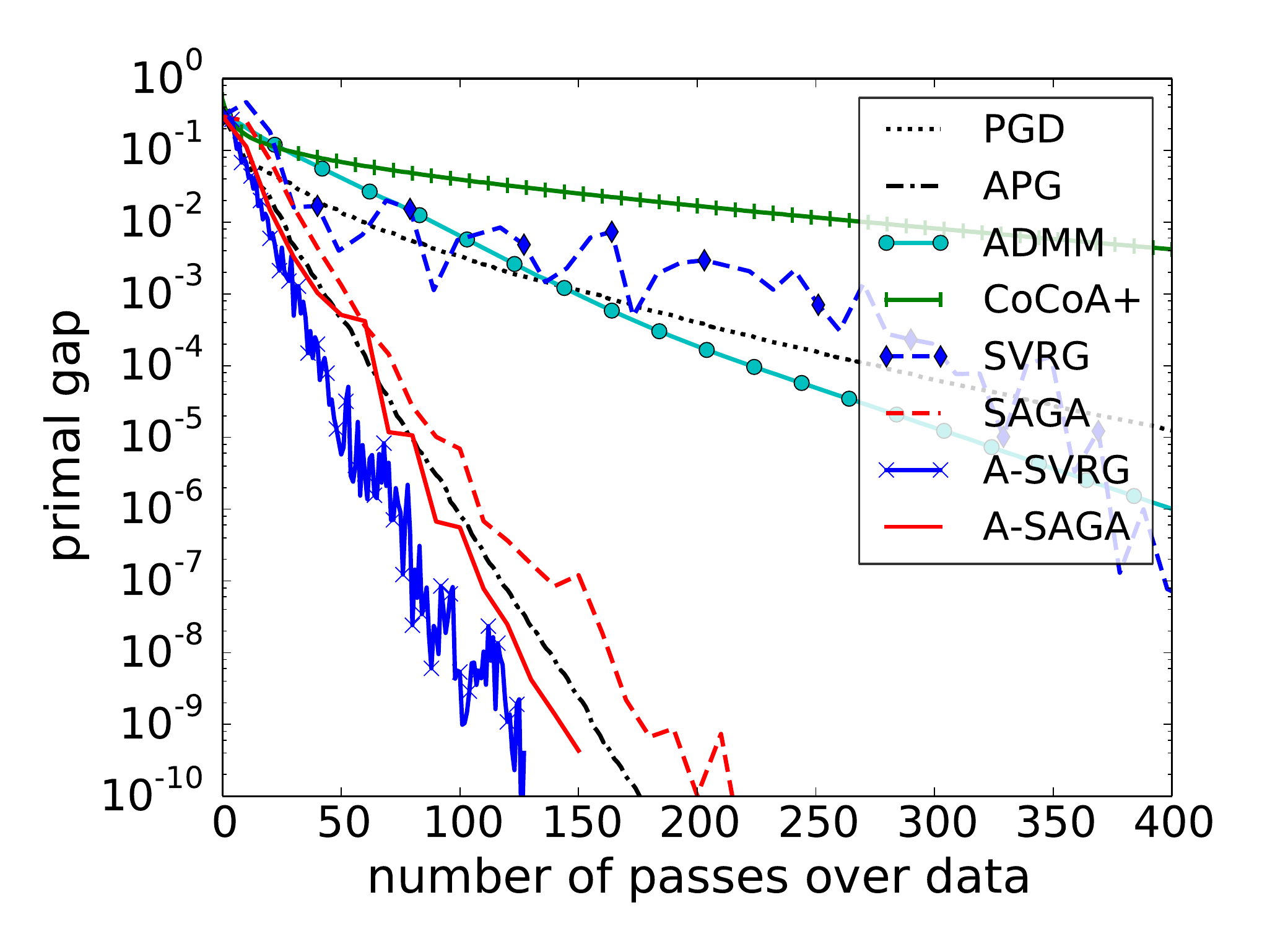}
  \includegraphics[width=0.49\textwidth]{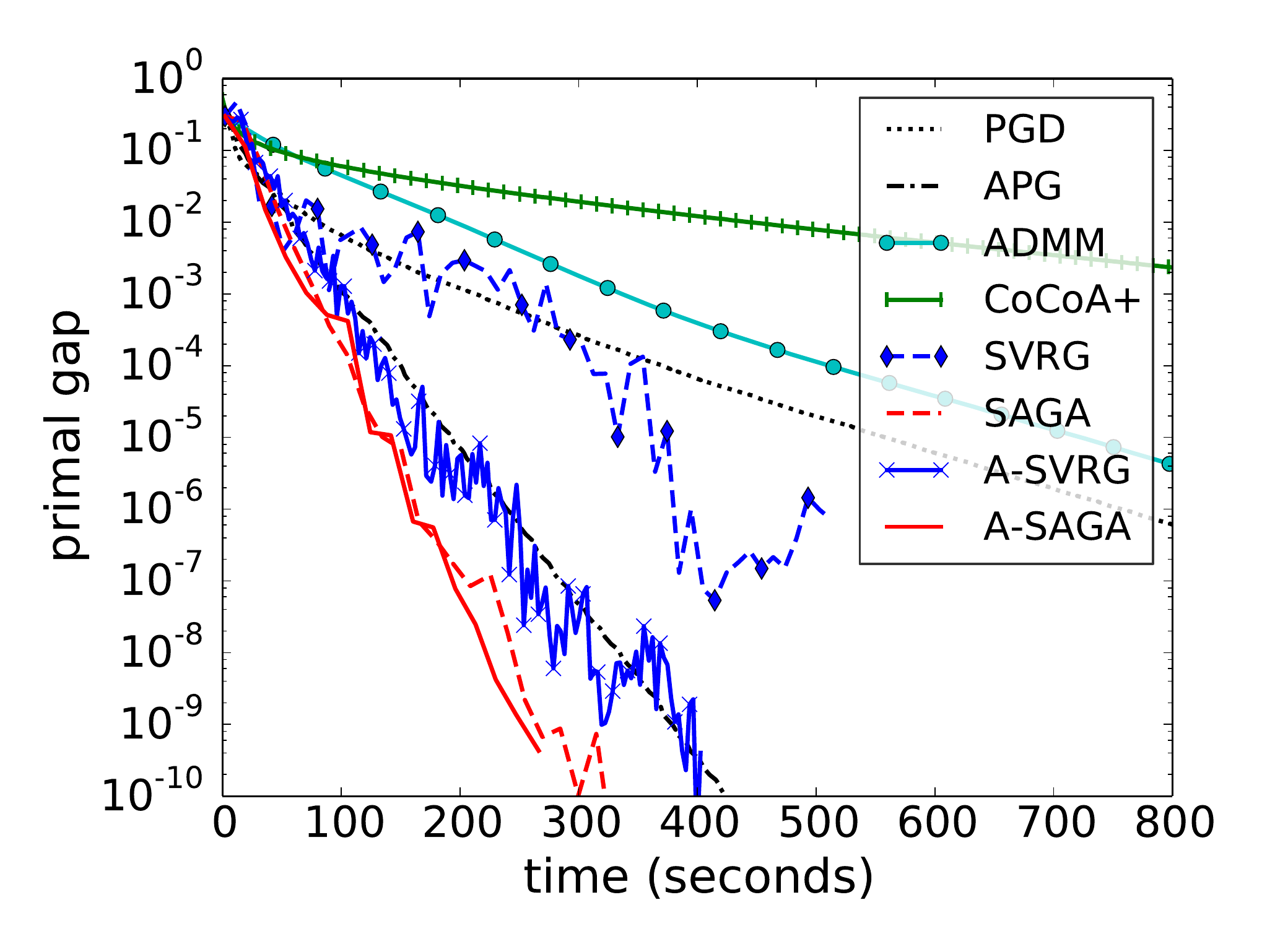}
  \vspace{-3ex}
  \caption{\texttt{webspam}: logistic regression, $\lambda=10^{-4}$, 
            sorted labels, $m=20$, $n=50$, $h=10$.}
  \label{fig:webspamSort4ini}
\end{figure}

\begin{figure}[t]
  \centering
  \includegraphics[width=0.49\textwidth]{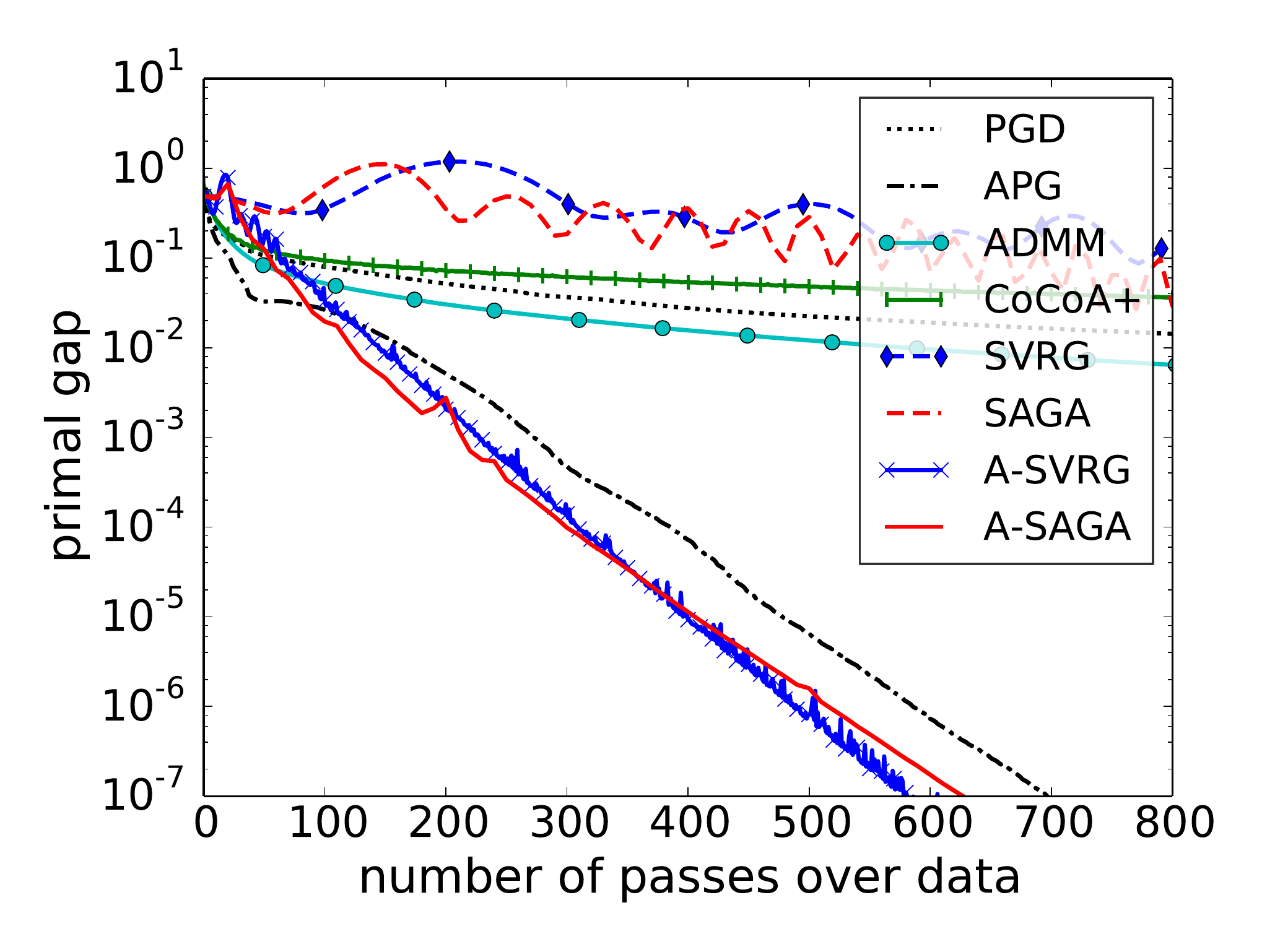}
  \includegraphics[width=0.49\textwidth]{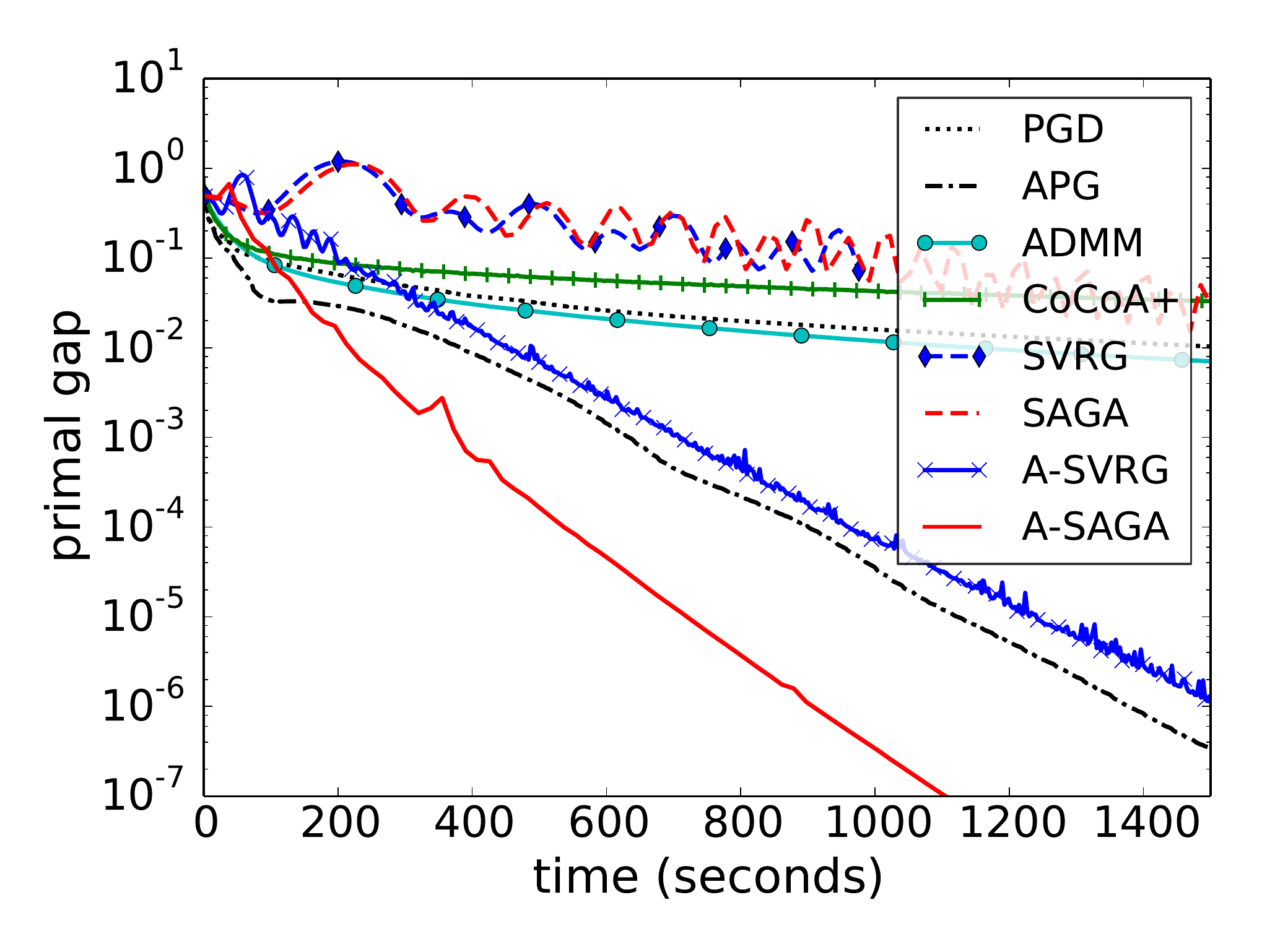}
  \caption{\texttt{webspam}: logistic regression, $\lambda=10^{-6}$, 
            sorted labels, $m=20$, $n=50$, $h=10$.}
  \label{fig:webspamSort6ini}
\end{figure}

Finally, we conducted experiments on the \texttt{splice-site} dataset
with 100 workers and 20 parameter servers.
The results are shown in Figure~\ref{fig:splice6ini}.
Here the dataset is again randomly shuffled and evenly distributed to
the workers.
The relative performance of different algorithms are similar to those
for the other datasets.

\begin{figure}[t]
  \centering
  \includegraphics[width=0.49\textwidth]{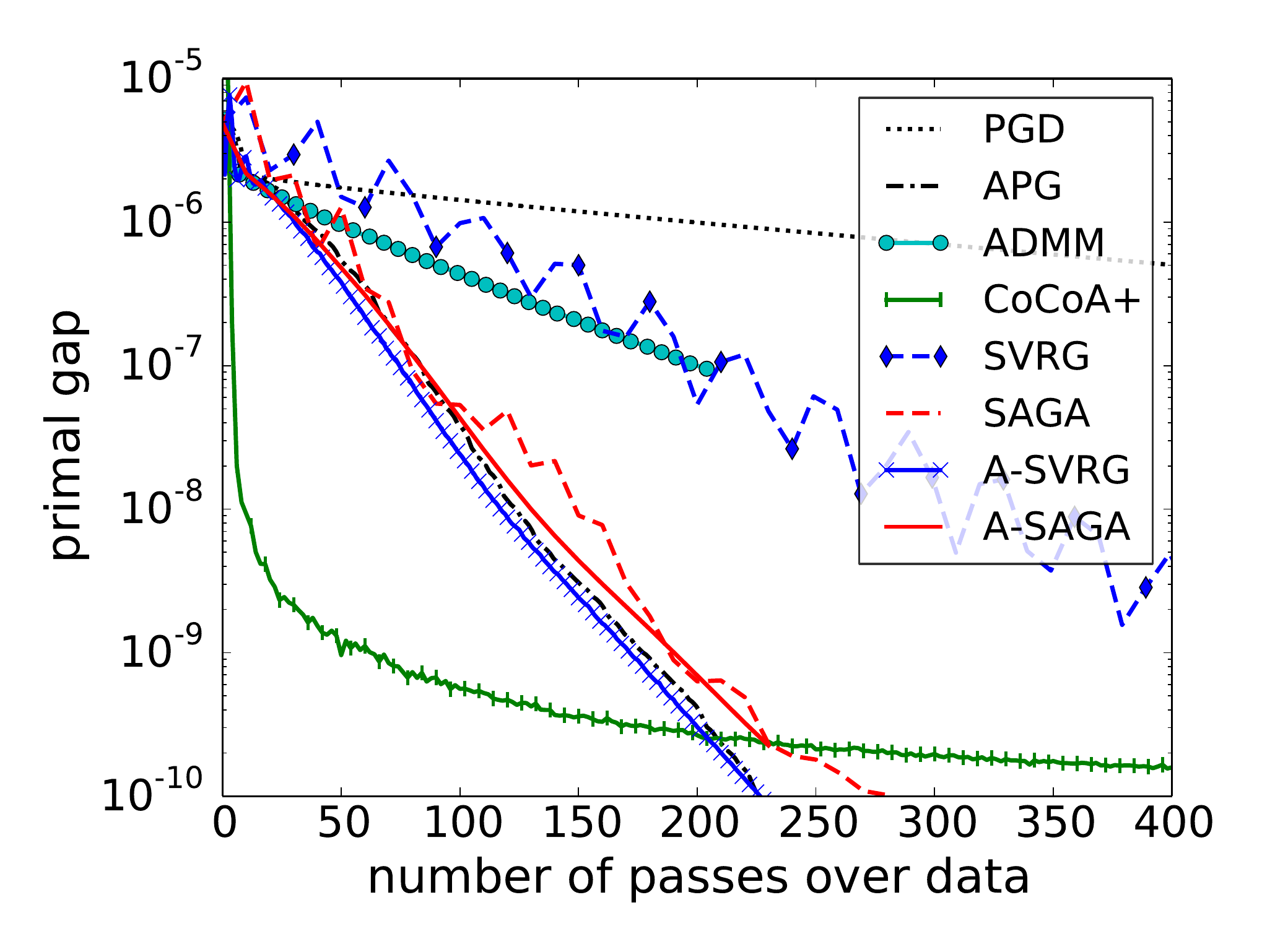}
  \includegraphics[width=0.49\textwidth]{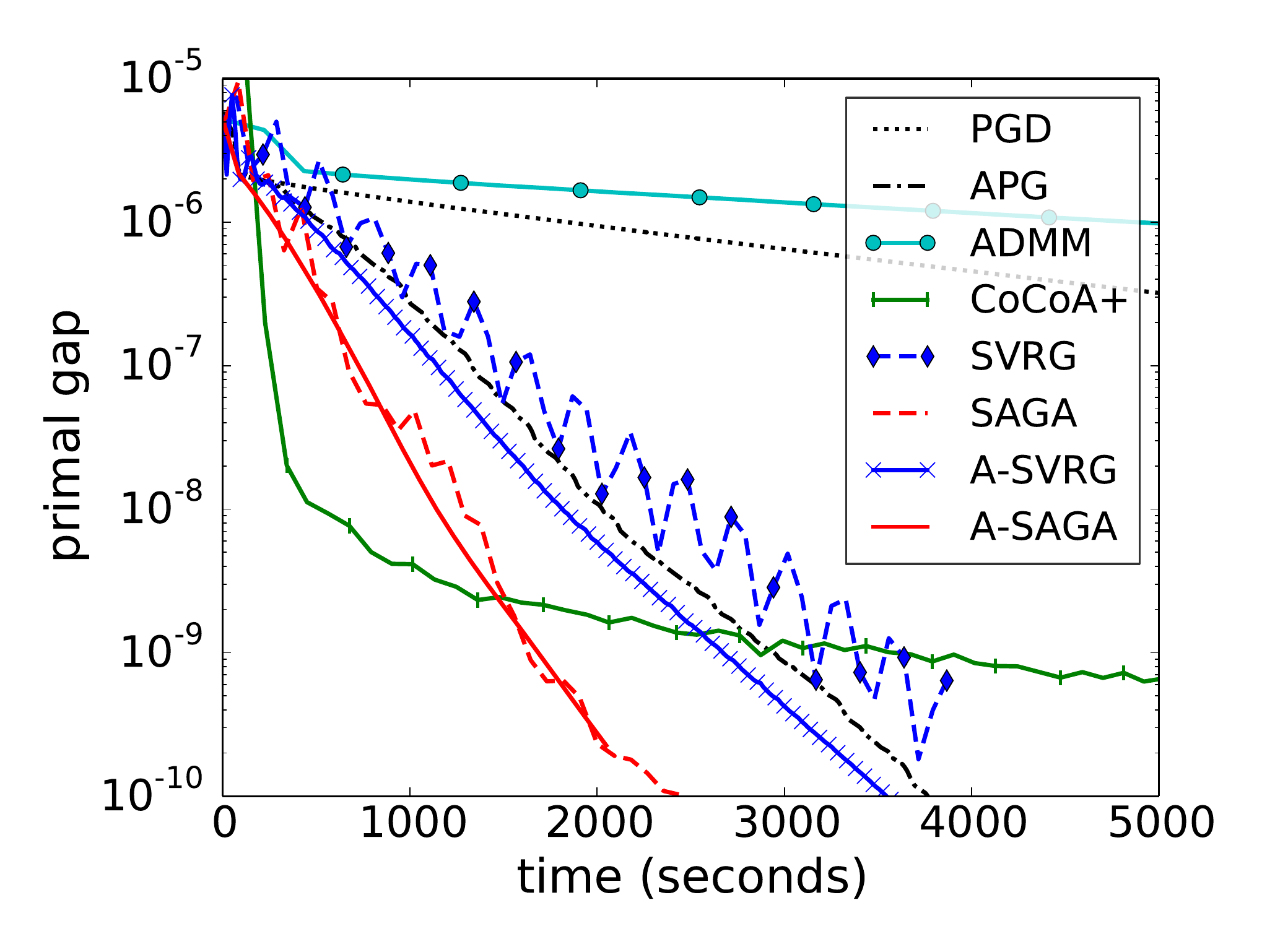}
  \caption{\texttt{splice-site}: logistic loss, $\lambda=10^{-6}$.
            randomly shuffled, $m=100$, $n=150$, $h=20$.}
  \label{fig:splice6ini}
\end{figure}


\section{Conclusions and Discussions}
\label{sec:conclusions}

We proposed a class of DSCOVR algorithms for asynchronous distributed
optimization of large linear models with convex loss functions.
They avoid dealing with delays and stale updates in an asynchronous, 
event-driven environment by exploiting simultaneous data and model parallelism.
Compared with other first-order distributed algorithms, DSCOVR may require
less amount of overall communication and computation, 
and especially much less or no synchronization.
These conclusions are well supported by our computational experiments on 
a distributed computing cluster.

We note that there is still some gap between theory and practice.
In our theoretical analysis, 
we assume that the primal and dual block indices in 
different iterations of DSCOVR are i.i.d.\ random variables, 
sampled sequentially with replacement.
But the parallel implementations described in Section~\ref{sec:implementation}
impose some constraints on how they are generated. 
In particular, the parameter block to be updated next is randomly chosen from 
the set of blocks that are not being updated by any worker simultaneously,
and the next worker available is event-driven, depending on the loads and 
processing power of different workers as well as random communication latency.
These constraints violate the i.i.d.\ assumption, but our experiments 
show that they still lead to very competitive performance.
Intuitively some of them can be potentially beneficial, 
reminiscent of the practical advantage of sampling without
replacement over sampling with replacement in randomized coordinate descent
methods \citep[e.g.,][]{SSZhang13SDCA}.
This is an interesting topic worth future study.

In our experiments, the parallel implementation of Nesterov's accelerated 
gradient method (APG) is very competitive on all the datasets we tried
and for different regularization parameters used. 
In addition to the theoretical justifications 
in \citet{ArjevaniShamir2015NIPS} and \citet{ScamanBach2017}, 
the adaptive line-search scheme turns out to be critical for its good 
performance in practice.
The accelerated DSCOVR algorithms demonstrated comparable or better performance
than APG in the experiments, but need careful tuning of the constants
in their step size formula.
On one hand, it supports our theoretical results that DSCOVR is capable of
outperforming 
other first-order algorithms including APG, in terms of both communication
and computation complexity (see Section~\ref{sec:main-results}).
On the other hand, there are more to be done in order to realize the full 
potential of DSCOVR in practice.
In particular, we plan to follow the ideas in \cite{WangXiao2017ICML}
to develop adaptive schemes that can automatically
tune the step size parameters, as well as exploit strong convexity from data.

\section*{Acknowledgment}
\label{sec:ack}

Adams Wei Yu is currently supported by NVIDIA PhD Fellowship. The authors would like to thank Chiyuan Zhang for helpful discussion of the system implementation.

\appendix

\section{Proof of Theorem~\ref{thm:dscovr-svrg}}
\label{sec:proof-svrg}

We first prove two lemmas concerning the primal and dual proximal updates
in Algorithm~\ref{alg:dscovr-svrg}.
Throughout this appendix, $\E_t[\cdot]$ denotes the conditional expectation
taken with respect to the random indices~$j$ and~$l$ generated during the
$t$th inner iteration in Algorithm~\ref{alg:dscovr-svrg}, conditioned on all 
quantities available at the beginning of the $t$th iteration,
including $\wt$ and $\at$.
Whenever necessary, we also use the notation $j^{(t)}$ and $l^{(t)}$ to denote
the random indices generated in the $t$th iteration.

\begin{lemma}\label{lem:dual-expect-t}
For each $i=1,\ldots,m$, let $\uitp\in\R^{N_i}$ be a random variable and define
\begin{equation}\label{eqn:aitld-prox}
\aitld = \prox_{\sigma_i f_i^*}\bigl(\ait+\sigma_i\uitp\bigr).
\end{equation}
We choose an index~$j$ randomly from $\{1,\ldots,m\}$ with probability
distribution $\bigl\{p_j\bigr\}_{j=1}^m$ and let
\[
\aitp = \left\{ \begin{array}{ll} \aitld & \textrm{if}~i=j, \\
	\ait & \textrm{otherwise}. \end{array} \right.
\]
If each $\uitp$ is independent of~$j$ and satisfies 
$\E_t\bigl[\uitp\bigr]=\Xri\wt$ for $i=1,\ldots,m$, then we have
\begin{eqnarray}
&&\sum_{i=1}^{m}\left( \frac{1}{p_i} \left(\frac{1}{2\sigma_i}+\gamma_i\right)
  - \gamma_i\right) \|\ait-\aiopt\|^2 \nonumber\\
&\geq&  \sum_{i=1}^m \frac{1}{p_i}\left(\frac{1}{2\sigma_i}+\gamma_i\right)
  \E_t[\|\aitp-\aiopt\|^2] 
  + \sum_{i=1}^m \frac{1}{p_i}\left(\frac{1}{2\sigma_i}-\frac{1}{a_i}
  \right) \E_t[\|\aitp-\ait\|^2] \nonumber \\
&& - \sum_{i=1}^m \frac{a_i}{4} \E_t\bigl[\|\uitp-\Xri \wt\|^2\bigr]
  + \left\langle \wt - \wopt,\, X^T(\aopt-\at) \right\rangle \nonumber \\
&& - \sum_{i=1}^m \frac{1}{p_i} \E_t \!\left[\left\langle\aitp-\ait,\,
  \Xri(\wt-\wopt) \right\rangle \right] ,
\label{eqn:dual-one-step-sum}
\end{eqnarray}
where $(\wopt\! ,\aopt)$ is the saddle point of $L(w,\alpha)$ defined
in~\eqref{eqn:Lagrangian}, and the $a_i$'s are arbitrary positive numbers.
\end{lemma}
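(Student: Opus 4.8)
The plan is to reduce the lemma to a single per-coordinate proximal inequality, take the conditional expectation $\E_t$, and sum over $i$.

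First I would write the optimality inequality for the $i$-th dual prox step. Since $f_i$ is $1/\gamma_i$-smooth, its conjugate $f_i^*$ is $\gamma_i$-strongly convex, so the map $\ai\mapsto f_i^*(\ai)-\langle\ai,\uitp\rangle+\frac{1}{2\sigma_i}\|\ai-\ait\|^2$ is $(\gamma_i+1/\sigma_i)$-strongly convex with minimizer $\aitld$ (this is exactly~\eqref{eqn:aitld-prox} rewritten). Evaluating the strong-convexity inequality at $\ai=\aiopt$, and then using the saddle-point condition $\Xri\wopt\in\partial f_i^*(\aiopt)$ (which holds because $\aopt$ maximizes $\Lagr(\wopt,\cdot)$) together with the $\gamma_i$-strong convexity of $f_i^*$ to eliminate $f_i^*(\aiopt)-f_i^*(\aitld)$, I expect to reach the deterministic three-point bound
\[
\Big(\tfrac{1}{2\sigma_i}+\gamma_i\Big)\|\aitld-\aiopt\|^2+\tfrac{1}{2\sigma_i}\|\aitld-\ait\|^2-\tfrac{1}{2\sigma_i}\|\ait-\aiopt\|^2\;\le\;\big\langle\aitld-\aiopt,\;\uitp-\Xri\wopt\big\rangle .
\]

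Next I would handle the right-hand inner product. Writing $\uitp-\Xri\wopt=(\uitp-\Xri\wt)+\Xri(\wt-\wopt)$ and $\aitld-\aiopt=(\aitld-\ait)+(\ait-\aiopt)$, I would apply Young's inequality $\langle\aitld-\ait,\,\uitp-\Xri\wt\rangle\le\frac1{a_i}\|\aitld-\ait\|^2+\frac{a_i}{4}\|\uitp-\Xri\wt\|^2$ to the ``variance'' cross term, leaving $\langle\aitld-\ait,\,\Xri(\wt-\wopt)\rangle$ and $\langle\ait-\aiopt,\,\uitp-\Xri\wopt\rangle$ untouched. Then I would take $\E_t$. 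Unbiasedness $\E_t[\uitp]=\Xri\wt$ kills the random part of the last term, turning it into $\langle\ait-\aiopt,\,\Xri(\wt-\wopt)\rangle$. The coupling $\aitp=\aitld$ when $j=i$ and $\aitp=\ait$ otherwise, together with the independence of $\uitp$ from $j$, gives $\E_t\|\aitld-\aiopt\|^2=\frac1{p_i}\E_t\|\aitp-\aiopt\|^2-\frac{1-p_i}{p_i}\|\ait-\aiopt\|^2$, $\E_t\|\aitld-\ait\|^2=\frac1{p_i}\E_t\|\aitp-\ait\|^2$, and $\E_t\langle\aitld-\ait,\,\Xri(\wt-\wopt)\rangle=\frac1{p_i}\E_t\langle\aitp-\ait,\,\Xri(\wt-\wopt)\rangle$. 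Substituting these and collecting the $\|\ait-\aiopt\|^2$ contributions—the $-\gamma_i$ in the coefficient $\frac1{p_i}(\frac1{2\sigma_i}+\gamma_i)-\gamma_i$ arises exactly from this recombination—yields the per-$i$ inequality.

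Finally I would sum over $i=1,\dots,m$. The only term needing attention is $-\sum_i\langle\ait-\aiopt,\,\Xri(\wt-\wopt)\rangle$, which equals $-\langle X^T(\at-\aopt),\,\wt-\wopt\rangle=\langle\wt-\wopt,\,X^T(\aopt-\at)\rangle$ because $\sum_i(\Xri)^T(\ait-\aiopt)=X^T(\at-\aopt)$; this reproduces the coupling term in~\eqref{eqn:dual-one-step-sum}, and every remaining term matches directly. The main obstacle is purely bookkeeping: carefully propagating the randomness through $\aitld$ (random via $\uitp$) versus $\aitp$ (which also mixes in $\ait$ with probability $1-p_i$), and keeping the $\gamma_i$ coming from strong convexity of $f_i^*$ separate from the $1/(2\sigma_i)$ coming from the prox regularizer; there is no conceptual difficulty beyond this.
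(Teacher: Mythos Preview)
Your proposal is correct and follows essentially the same route as the paper: prox strong-convexity inequality plus the saddle-point subgradient condition to get the three-point bound, then the same inner-product splitting with Young's inequality on the variance term, unbiasedness to kill the mean term, and the identities relating $\aitld$ to $\aitp$ via the sampling of $j$. The only cosmetic difference is that the paper splits $\langle\aiopt-\aitld,\uitp-\Xri\wopt\rangle$ by first separating $\uitp-\Xri\wopt$ into $\Xri(\wt-\wopt)$ and $\uitp-\Xri\wt$, whereas you first separate $\aitld-\aiopt$ into $\aitld-\ait$ and $\ait-\aiopt$; after taking $\E_t$ both yield the same terms.
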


\begin{proof}
First, consider a fixed index $i\in\{1,\ldots,m\}$. 
The definition of $\aitld$ in~\eqref{eqn:aitld-prox} is equivalent to
\begin{equation}\label{eqn:alpha-tilde}
\aitld = \argmin_{\beta\in\R^{N_i}} \left\{
  f_i^*(\beta) - \bigl\langle \beta, \uitp \bigr\rangle 
  + \frac{\|\beta-\ait\|^2}{2\sigma_i} \right\}.
\end{equation}
By assumption, $f_i^*(\beta)$ and $\frac{1}{2\sigma_i}\|\beta-\ait\|^2$ 
are strongly convex with convexity parameters $\gamma_i$ and 
$\frac{1}{\sigma_i}$ respectively. Therefore, the objective function 
in~\eqref{eqn:alpha-tilde} is $(\frac{1}{\sigma_i}+\gamma_i)$-strongly convex,
which implies
\begin{eqnarray}
&& \frac{\|\aiopt-\ait\|^2}{2\sigma_i} 
	- \bigl\langle\aiopt,\,\uitp\bigr\rangle +f_i^*(\aiopt) \nonumber \\
&\geq& \frac{\|\aitld-\ait\|^2}{2\sigma_i} 
	- \bigl\langle\aitld,\,\uitp\bigr\rangle +f_i^*(\aitld) 
+ \left(\frac{1}{\sigma_i}+\gamma_i\right)\frac{\|\aitld-\aiopt\|^2}{2}.
\label{eqn:dual-update-sc}
\end{eqnarray}
In addition, since $(\wopt,\aopt)$ is the saddle-point of $L(w,\alpha)$,
the function $f_i^*(\ai)-\langle\ai,\Xri\wopt\rangle$
is $\gamma_i$-strongly convex in~$\ai$ and attains its minimum at $\aiopt$.
Thus we have
\[
  f_i^*\bigl(\aitld\bigr)-\bigl\langle\aitld,\, \Xri\wopt\bigr\rangle
  \geq
  f_i^*(\aiopt)-\bigl\langle\aiopt,\, \Xri\wopt\bigr\rangle
  +\frac{\gamma_i}{2}\|\aitld-\aiopt\|^2.
\]
Summing up the above two inequalities gives
\begin{eqnarray*}
\frac{\|\aiopt-\ait\|^2}{2\sigma_i}
&\geq &
\frac{\|\aitld-\ait\|^2}{2\sigma_i}
+ \Bigl(\frac{1}{2\sigma_i}+\gamma_i\Bigr) \|\aitld-\aiopt\|^2
+ \left\langle\aiopt-\aitld,\, \uitp-\Xri\wopt\right\rangle \\
&=& \frac{\|\aitld-\ait\|^2}{2\sigma_i}
+ \Bigl(\frac{1}{2\sigma_i}+\gamma_i\Bigr) \|\aitld-\aiopt\|^2
+ \left\langle\aiopt-\aitld,\, \Xri(\wt-\wopt)\right\rangle \\
&& + \left\langle\aiopt-\ait,\,\uitp-\Xri\wt \right\rangle
+ \left\langle\ait-\aitld,\,\uitp-\Xri\wt \right\rangle \\
&\geq& \frac{\|\aitld-\ait\|^2}{2\sigma_i}
+ \Bigl(\frac{1}{2\sigma_i}+\gamma_i\Bigr) \|\aitld-\aiopt\|^2
+ \left\langle\aiopt-\aitld,\, \Xri(\wt-\wopt)\right\rangle \\
&& + \left\langle\aiopt-\ait,\,\uitp-\Xri\wt \right\rangle
 - \frac{\|\ait-\aitld\|^2}{a_i} - \frac{a_i\|\uitp-\Xri\wt\|^2}{4},
\end{eqnarray*}
where in the last step we used Young's inequality 
with $a_i$ being an arbitrary positive number.
Taking conditional expectation $\E_t$ on both sides of the above inequality,
and using the assumption $\E_t[\uitp]=\Xri\wt$, we have
\begin{eqnarray}
\frac{\|\aiopt\!\!-\!\ait\|^2}{2\sigma_i}
\!\!\!\!&\geq&\!\!\!\!
\frac{\E_t\!\bigl[\|\aitld\!\!-\!\ait\|^2\bigr]}{2\sigma_i}
+ \Bigl(\frac{1}{2\sigma_i}\!+\!\gamma_i\Bigr) \E_t[\|\aitld\!\!-\!\aiopt\|^2]
+ \E_t\!\left[\left\langle\aiopt\!\!-\!\aitld\!, \Xri(\wt\!\!-\!\wopt)
  \right\rangle\right] \nonumber \\
&& - \frac{\E_t[\|\ait-\aitld\|^2]}{a_i} 
- \frac{a_i\E_t[\|\uitp-\Xri\wt\|^2]}{4}. 
\label{eqn:dual-one-step-lb}
\end{eqnarray}

Notice that each $\aitld$ depends on the random variable $\uitp$ and is
independent of the random index~$j$.
But $\aitp$ depends on both $\uitp$ and~$j$. 
Using the law of total expectation, 
\[
\E_t[\,\cdot\,] ~=~ \Prob(j=i)\E_t[\,\cdot\,|j=i] 
 + \Prob(j\neq i) \E_t[\,\cdot\,|j\neq i] ,
\]
we obtain
\begin{eqnarray}
\E_t[\aitp] &=& p_i \E_t[\aitld] + (1-p_i) \ait, 
\label{eqn:alpha-cond-mean} \\
\E_t[\|\aitp-\ait\|^2] &=& p_i\E_t[\|\aitld-\ait\|^2], 
\label{eqn:alpha-change-norm2} \\
\E_t[\|\aitp-\aiopt\|^2] &=& p_i\E_t[\|\aitld-\aiopt\|^2]
  + (1-p_i) \E_t[\|\ait-\aiopt\|^2] .
\label{eqn:alpha-change-opt}
\end{eqnarray}
Next, using the equalities~\eqref{eqn:alpha-cond-mean},
\eqref{eqn:alpha-change-norm2} and~\eqref{eqn:alpha-change-opt},
we can replace each term in~\eqref{eqn:dual-one-step-lb} containing
$\aitld$ with terms that contain only $\ait$ and $\aitp$.
By doing so and rearranging terms afterwards, we obtain
\begin{eqnarray*}
&&\left(\frac{1}{p_i}\left(\frac{1}{2\sigma_i}+\gamma_i\right)-\gamma_i\right)
\|\ait-\aiopt\|^2 \\
&\geq& \frac{1}{p_i}\left(\frac{1}{2\sigma_i}+\gamma_i\right)
\E_t[\|\aitp-\aiopt\|^2
+ \frac{1}{p_i}\left(\frac{1}{2\sigma_i}-\frac{1}{a_i}\right)
\E_t[\|\aitp-\ait\|^2 \\
&&-\, \frac{a_i\E_t[\|\uitp-\Xri\wt\|^2]}{4} 
+ \left\langle\aiopt-\ait,\,\Xri(\wt-\wopt)\right\rangle \\
&& -\,\E_t\Bigl[\Bigl\langle\frac{1}{p_i}(\aitp-\ait),\, 
\Xri(\wt-\wopt)\Bigr\rangle\Bigr].
\end{eqnarray*}
Summing up the above inequality for $i=1,\ldots,m$ gives the desired
result in~\eqref{eqn:dual-one-step-sum}.
\end{proof}

\begin{lemma}\label{lem:primal-expect-t}
For each $k=1,\ldots,n$, let $\vktp\in\R^{d_i}$ be a random variable and define
\[
\wktld = \prox_{\tau_k g_k}\bigl(\wkt-\tau_k\vktp\bigr).
\]
We choose an index~$l$ randomly from $\{1,\ldots,n\}$ with probability
distribution $\bigl\{q_l\bigr\}_{l=1}^n$ and let
\[
\wktp = \left\{ \begin{array}{ll} \wktld & \textrm{if}~k=l, \\
	\wkt & \textrm{otherwise}. \end{array} \right.
\]
If each $\vktp$ is independent of~$l$ and satisfies 
$\E_t\bigl[\vktp\bigr]=\frac{1}{m}(\Xck)^T\at$, then we have
\begin{eqnarray}
&&\sum_{k=1}^{n}\left( \frac{1}{q_k} \left(\frac{1}{2\tau_k}+\lambda\right)
  - \lambda\right) \|\wkt-\wkopt\|^2 \nonumber\\
&\geq&  \sum_{k=1}^n \frac{1}{q_k}\left(\frac{1}{2\tau_k}+\lambda\right)
  \E_t[\|\wktp-\wkopt\|^2] 
  + \sum_{k=1}^n \frac{1}{q_k}\left(\frac{1}{2\tau_k}-\frac{1}{b_k}
  \right) \E_t[\|\wktp-\wkt\|^2] \nonumber \\
&& -\sum_{k=1}^n\frac{b_k}{4}
  \E_t\!\left[\Bigl\|\vktp-\frac{1}{m}(\Xck)^T\at\Bigr\|^2\right]
  + \frac{1}{m}\left\langle  X(\wt-\wopt),\,
  \at - \aopt  \right\rangle \nonumber \\
&& + \sum_{k=1}^n \frac{1}{q_k} \E_t \!\left[\left\langle\wktp-\wkt,\,
\frac{1}{m}(\Xck)^T(\at-\aopt) \right\rangle \right] ,
\label{eqn:primal-one-step-sum}
\end{eqnarray}
where $(\wopt,\aopt)$ is the saddle point of $L(w,\alpha)$ defined
in~\eqref{eqn:Lagrangian},
and the $b_i$'s are arbitrary positive numbers.
\end{lemma}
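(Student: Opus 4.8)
The plan is to mirror the proof of Lemma~\ref{lem:dual-expect-t}, interchanging the roles of the primal and dual blocks and carefully tracking the sign changes that arise because $L(w,\alpha)$ is convex in~$w$ but concave in~$\alpha$. First I would fix a single index~$k$ and rewrite the definition of $\wktld$ as the minimizer of
\[
\omega \;\mapsto\; g_k(\omega) + \bigl\langle \omega,\, \vktp\bigr\rangle + \frac{1}{2\tau_k}\|\omega - \wkt\|^2 ,
\]
which, since each $g_k$ is $\lambda$-strongly convex (by separability of~$g$ and Assumption~\ref{asmp:convexity}), is $(1/\tau_k + \lambda)$-strongly convex. Comparing the value of this function at $\wkopt$ with its value at its minimizer $\wktld$ gives one inequality. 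A second one comes from the saddle-point property: since $\wopt$ minimizes $L(\cdot,\aopt)$ and that function is separable in the $w_k$'s, the map $u \mapsto g_k(u) + \frac{1}{m}\bigl\langle (\Xck)^T\aopt,\, u\bigr\rangle$ is $\lambda$-strongly convex and attains its minimum at $\wkopt$. Adding the two inequalities cancels the $g_k$ terms and, after rearranging, yields
\[
\frac{\|\wkopt - \wkt\|^2}{2\tau_k} \;\geq\; \frac{\|\wktld - \wkt\|^2}{2\tau_k} + \Bigl(\frac{1}{2\tau_k} + \lambda\Bigr)\|\wktld - \wkopt\|^2 + \Bigl\langle \wktld - \wkopt,\; \vktp - \frac{1}{m}(\Xck)^T\aopt\Bigr\rangle .
\]

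Next I would split the last inner product by writing $\vktp - \frac{1}{m}(\Xck)^T\aopt = \bigl(\vktp - \frac{1}{m}(\Xck)^T\at\bigr) + \frac{1}{m}(\Xck)^T(\at - \aopt)$ and $\wktld - \wkopt = (\wktld - \wkt) + (\wkt - \wkopt)$, apply Young's inequality with parameter $b_k$ to the cross term $\bigl\langle \wktld - \wkt,\, \vktp - \frac{1}{m}(\Xck)^T\at\bigr\rangle$, and then take the conditional expectation $\E_t$. By the hypotheses $\E_t[\vktp] = \frac{1}{m}(\Xck)^T\at$ and the independence of $\vktp$ from~$l$, the term $\bigl\langle \wkt - \wkopt,\, \vktp - \frac{1}{m}(\Xck)^T\at\bigr\rangle$ vanishes in expectation, leaving only the coupling term $\bigl\langle \wkt - \wkopt,\, \frac{1}{m}(\Xck)^T(\at - \aopt)\bigr\rangle$ and a residual $\bigl\langle \wktld - \wkt,\, \frac{1}{m}(\Xck)^T(\at - \aopt)\bigr\rangle$.

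Then, exactly as in~\eqref{eqn:alpha-cond-mean}--\eqref{eqn:alpha-change-opt}, I would use the law of total expectation over~$l$ to replace every quantity involving $\wktld$ by quantities involving only $\wkt$ and $\wktp$: namely $\E_t[\|\wktp - \wkt\|^2] = q_k\,\E_t[\|\wktld - \wkt\|^2]$, $\E_t[\|\wktp - \wkopt\|^2] = q_k\,\E_t[\|\wktld - \wkopt\|^2] + (1-q_k)\|\wkt - \wkopt\|^2$, and $\E_t[\langle \wktp - \wkt,\,\cdot\,\rangle] = q_k\,\E_t[\langle \wktld - \wkt,\,\cdot\,\rangle]$. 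Substituting, dividing through by $q_k$ where needed, and collecting terms gives the per-$k$ inequality; summing it over $k=1,\ldots,n$ — at which point $\sum_{k=1}^n \bigl\langle \wkt-\wkopt,\, \frac{1}{m}(\Xck)^T(\at-\aopt)\bigr\rangle = \frac{1}{m}\bigl\langle X(\wt-\wopt),\, \at-\aopt\bigr\rangle$ by the block-column structure of~$X$ — yields exactly~\eqref{eqn:primal-one-step-sum}. I do not expect a genuine obstacle: the argument is essentially bookkeeping, and the only delicate points are keeping the sign of the bilinear coupling correct (it is opposite to the corresponding sign in Lemma~\ref{lem:dual-expect-t} because $w$ is the minimization variable, and a factor $1/m$ is carried throughout because it multiplies the coupling term in the $w$-update but cancels in the $\alpha$-update) and correctly absorbing the powers of $q_k$ so that the left-hand coefficient comes out as $\frac{1}{q_k}\bigl(\frac{1}{2\tau_k}+\lambda\bigr) - \lambda$.
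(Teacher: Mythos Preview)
Your proposal is correct and follows exactly the approach the paper intends: the paper does not spell out a separate proof of Lemma~\ref{lem:primal-expect-t} but simply states that it ``is similar to Lemma~\ref{lem:dual-expect-t} and can be proved using the same techniques,'' and your outline is precisely that mirror argument with the appropriate sign and $1/m$ adjustments. The per-$k$ inequality you derive, the Young-inequality step with parameter $b_k$, the conversion from $\wktld$ to $\wktp$ via the identities analogous to~\eqref{eqn:alpha-cond-mean}--\eqref{eqn:alpha-change-opt}, and the final summation all match what the proof of Lemma~\ref{lem:dual-expect-t} does on the dual side.
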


Lemma~\ref{lem:primal-expect-t} is similar to Lemma~\ref{lem:dual-expect-t} 
and can be proved using the same techniques. 
Based on these two lemmas, we can prove the following proposition.
\begin{proposition}
\label{prop:svrg-primal-dual}
The $t$-th iteration within the $s$-th stage of
Algorithm~\ref{alg:dscovr-svrg} guarantees
\begin{eqnarray}
&& \sum_{i=1}^m \frac{1}{m}\left[\frac{1}{p_i}\left(\frac{1}{2\sigma_i}
+\gamma_i\right)-\gamma_i+\sum_{k=1}^n\frac{3\tau_k\|\Xik\|^2}{m p_i} \right]
\|\ait-\aiopt\|^2 
+ \sum_{i=1}^m\sum_{k=1}^n\frac{2\tau_k\|\Xik\|^2}{m^2 p_i}
\|\abis-\aiopt\|^2  \nonumber \\
&&\!\!\!\!\! +\sum_{k=1}^n \left[\frac{1}{q_k}\left(\frac{1}{2\tau_k}
+\lambda\right)-\lambda+\sum_{i=1}^m\frac{3\sigma_i\|\Xik\|^2}{m q_k} \right]
\|\wkt-\wkopt\|^2 
+ \sum_{i=1}^m\sum_{k=1}^n\frac{2\sigma_i\|\Xik\|^2}{m q_k}
\|\wbks-\wkopt\|^2  \nonumber \\
&\geq& \sum_{i=1}^m \frac{1}{m p_i}\left(\frac{1}{2\sigma_i}+\gamma_i\right)
\E_t\bigl[\|\aitp-\aiopt\|^2\bigr]
+\sum_{k=1}^n \frac{1}{q_k}\left(\frac{1}{2\tau_k}+\lambda\right)
\E_t\bigl[\|\wktp-\wkopt\|^2\bigr].
\label{eqn:svrg-pd-one-step}
\end{eqnarray}
\end{proposition}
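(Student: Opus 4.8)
The plan is to apply the two proximal-update lemmas, Lemma~\ref{lem:dual-expect-t} and Lemma~\ref{lem:primal-expect-t}, to the specific variance-reduced gradients of Algorithm~\ref{alg:dscovr-svrg}, bound the variance terms they leave behind, add the resulting inequalities with weights $1/m$ and $1$, and clean up the remaining inner-product terms with Young's inequality.

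First I would set, for \emph{all} $i=1,\dots,m$ and $k=1,\dots,n$ (not merely the sampled pair $j,l$),
\[
  \uitp = \ubis + \frac{1}{q_l}\Xil\bigl(\wlt-\wbls\bigr),
  \qquad
  \vktp = \vbks + \frac{1}{p_j}\frac{1}{m}(\Xjk)^T\bigl(\ajt-\abjs\bigr).
\]
These are independent of $j$ and of $l$ respectively, and the computations displayed just after~\eqref{eqn:vr-sg-u}--\eqref{eqn:vr-sg-v} give $\E_t[\uitp]=\Xri\wt$ and $\E_t[\vktp]=\frac{1}{m}(\Xck)^T\at$, so the hypotheses of both lemmas hold. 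Since $\ubis=\Xri\wbs=\sum_{k}\Xik\wbks$, we get $\uitp-\Xri\wt=\frac{1}{q_l}\Xil(\wlt-\wbls)-\sum_{k}\Xik(\wkt-\wbks)$, which is an importance-weighted single-coordinate estimator minus its own mean; hence $\E_t[\|\uitp-\Xri\wt\|^2]\le\sum_{k}\frac{1}{q_k}\|\Xik(\wkt-\wbks)\|^2$, and using $\|\Xik v\|\le\|\Xik\|\,\|v\|$ together with $\|a-b\|^2\le 2\|a-\wkopt\|^2+2\|b-\wkopt\|^2$,
\[
\E_t\bigl[\|\uitp-\Xri\wt\|^2\bigr]
  \le \sum_{k=1}^n\frac{2\|\Xik\|^2}{q_k}\Bigl(\|\wkt-\wkopt\|^2+\|\wbks-\wkopt\|^2\Bigr).
\]
An identical argument, using $\vbks=\frac{1}{m}(\Xck)^T\abs$, yields
\[
\E_t\Bigl[\bigl\|\vktp-\tfrac{1}{m}(\Xck)^T\at\bigr\|^2\Bigr]
  \le \sum_{i=1}^m\frac{2\|\Xik\|^2}{m^2 p_i}\Bigl(\|\ait-\aiopt\|^2+\|\abis-\aiopt\|^2\Bigr).
\]

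Next I would multiply~\eqref{eqn:dual-one-step-sum} by $1/m$, add~\eqref{eqn:primal-one-step-sum}, and fix the free constants as $a_i=4\sigma_i$ and $b_k=4\tau_k$. The ``pure'' bilinear terms $\langle\wt-\wopt,X^T(\aopt-\at)\rangle$ (times $1/m$) and $\frac{1}{m}\langle X(\wt-\wopt),\at-\aopt\rangle$ are then negatives of each other and cancel. Substituting the two variance bounds into $-\frac{1}{m}\sum_i\sigma_i\E_t[\|\uitp-\Xri\wt\|^2]$ and $-\sum_k\tau_k\E_t[\|\vktp-\tfrac{1}{m}(\Xck)^T\at\|^2]$ moves to the left-hand side the contributions with coefficient $\tfrac{2\sigma_i\|\Xik\|^2}{mq_k}$ on $\|\wkt-\wkopt\|^2$ and on $\|\wbks-\wkopt\|^2$, and $\tfrac{2\tau_k\|\Xik\|^2}{m^2 p_i}$ on $\|\ait-\aiopt\|^2$ and on $\|\abis-\aiopt\|^2$.

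The last and most delicate step is to dispose of the cross terms $-\frac{1}{m}\sum_i\frac{1}{p_i}\E_t[\langle\aitp-\ait,\Xri(\wt-\wopt)\rangle]$ and $\sum_k\frac{1}{q_k}\E_t[\langle\wktp-\wkt,\tfrac{1}{m}(\Xck)^T(\at-\aopt)\rangle]$, jointly with the step-change terms $\tfrac{1}{mp_i}\bigl(\tfrac{1}{2\sigma_i}-\tfrac{1}{a_i}\bigr)\E_t[\|\aitp-\ait\|^2]$ and $\tfrac{1}{q_k}\bigl(\tfrac{1}{2\tau_k}-\tfrac{1}{b_k}\bigr)\E_t[\|\wktp-\wkt\|^2]$. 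In the dual cross term only $i=j$ survives, so I would expand $\Xrj(\wt-\wopt)=\sum_k\Xjk(\wkt-\wkopt)$ and apply Young's inequality to each summand with parameter $\theta_{ik}=2\sigma_i/q_k$; symmetrically I would expand $(\Xcl)^T(\at-\aopt)=\sum_i(\Xil)^T(\ait-\aiopt)$ in the primal cross term with parameter $\psi_{ik}=mp_i/(2\tau_k\|\Xik\|^2)$ (terms with $\Xik=0$ being void), using also $\E_t[\|\aitp-\ait\|^2]=p_i\,\E_t[\|\aitld-\ait\|^2]$ and its primal analogue. The point of these choices, together with $a_i=4\sigma_i$ and $b_k=4\tau_k$, is the identities
\[
\frac{1}{2\sigma_i}-\frac{1}{a_i}-\sum_{k}\frac{1}{2\theta_{ik}}
=\frac{1}{4\sigma_i}\Bigl(1-\sum_{k}q_k\Bigr)=0,
\qquad
\frac{1}{2\tau_k}-\frac{1}{b_k}-\frac{1}{m}\sum_{i}\frac{\psi_{ik}\|\Xik\|^2}{2}
=\frac{1}{4\tau_k}\Bigl(1-\sum_{i}p_i\Bigr)=0,
\]
which annihilate the coefficients of $\E_t[\|\aitp-\ait\|^2]$ and $\E_t[\|\wktp-\wkt\|^2]$ so those terms drop out; the surviving spill-over lands on the left-hand side with coefficient $\tfrac{\sigma_i\|\Xik\|^2}{mq_k}$ on $\|\wkt-\wkopt\|^2$ and $\tfrac{\tau_k\|\Xik\|^2}{m^2 p_i}$ on $\|\ait-\aiopt\|^2$. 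Adding these to the contributions already obtained from the variance bounds raises the coefficients to $\tfrac{3\sigma_i\|\Xik\|^2}{mq_k}$ and $\tfrac{3\tau_k\|\Xik\|^2}{m^2 p_i}$, and rearranging yields precisely~\eqref{eqn:svrg-pd-one-step}. The only real work is this coefficient accounting — checking that the chosen Young parameters simultaneously kill the step-change terms and produce the exact coefficients claimed — since everything else is direct substitution.
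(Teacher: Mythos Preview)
Your proposal is correct and follows essentially the same route as the paper. The only presentational difference is in the treatment of the two cross terms: the paper applies a \emph{single} Young's inequality with parameters $c_i=4\sigma_i$ and $h_k=4\tau_k$ and then invokes the weighted Cauchy--Schwarz bound $\bigl\|\sum_k z_k\bigr\|^2\le\sum_k\frac{1}{q_k}\|z_k\|^2$ (and its $p_i$-analogue) on $\|\Xri(\wt-\wopt)\|^2$ and $\|\tfrac{1}{m}(\Xck)^T(\at-\aopt)\|^2$, whereas you expand the sum first and apply Young termwise with your $\theta_{ik},\psi_{ik}$. With your specific choices and with the conversion $\E_t\|\aitp-\ait\|^2=p_i\E_t\|\aitld-\ait\|^2$ (which you correctly flag as essential---without it the spill-over on $\|\wkt-\wkopt\|^2$ would carry an unwanted extra $1/p_i$), the two computations produce \emph{identical} coefficients, so this is a stylistic difference, not a substantive one. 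The paper's version has the mild advantage of avoiding the division by $\|\Xik\|^2$ in $\psi_{ik}$ and of treating the primal and dual sides symmetrically.
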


\begin{proof}
Multiplying both sides of the inequality~\eqref{eqn:dual-one-step-sum}
by~$\frac{1}{m}$ and adding to the inequality~\eqref{eqn:primal-one-step-sum}
gives
\begin{eqnarray}
&&\sum_{i=1}^{m}\frac{1}{m}\left( \frac{1}{p_i} 
  \left(\frac{1}{2\sigma_i}+\gamma_i\right)
  - \gamma_i\right) \|\ait-\aiopt\|^2 
+ \sum_{k=1}^{n}\left( \frac{1}{q_k} \left(\frac{1}{2\tau_k}+\lambda\right)
  - \lambda\right) \|\wkt-\wkopt\|^2 \nonumber\\
&\geq&  \sum_{i=1}^m \frac{1}{m p_i}\left(\frac{1}{2\sigma_i}+\gamma_i\right)
  \E_t[\|\aitp-\aiopt\|^2] 
  + \sum_{k=1}^n \frac{1}{q_k}\left(\frac{1}{2\tau_k}+\lambda\right)
  \E_t[\|\wktp-\wkopt\|^2] \nonumber \\
&&+ \sum_{i=1}^m \frac{1}{m p_i}\left(\frac{1}{2\sigma_i}-\frac{1}{a_i}
  \right) \E_t[\|\aitp-\ait\|^2] 
  + \sum_{k=1}^n \frac{1}{q_k}\left(\frac{1}{2\tau_k}-\frac{1}{b_k}
  \right) \E_t[\|\wktp-\wkt\|^2] \nonumber \\
&& -\sum_{k=1}^n\frac{b_k}{4}
  \E_t\!\left[\Bigl\|\vktp-\frac{1}{m}(\Xck)^T\at\Bigr\|^2\right] 
 + \sum_{k=1}^n \frac{1}{q_k} \E_t \!\left[\left\langle\wktp-\wkt,\,
  \frac{1}{m}(\Xck)^T(\at-\aopt) \right\rangle \right] \nonumber \\
&& - \sum_{i=1}^m \frac{a_i}{4m} \E_t[\|\uitp-\Xri \wt\|^2]
 - \sum_{i=1}^m \frac{1}{m p_i} \E_t \!\left[\left\langle\aitp-\ait,\,
  \Xri(\wt-\wopt) \right\rangle \right]. 
\label{eqn:primal-dual-sum} 
\end{eqnarray}
We notice that the terms containing
$ \frac{1}{m}\left\langle  X(\wt-\wopt),\, \at - \aopt  \right\rangle$
from~\eqref{eqn:dual-one-step-sum} and~\eqref{eqn:primal-one-step-sum} 
canceled each other. 
Next we bound the last four terms on the right-hand side 
of~\eqref{eqn:primal-dual-sum}.

As in Algorithm~\ref{alg:dscovr-svrg}, for each $i=1,\ldots,m$, we define
a random variable
\[
\uitp=\ubis- \frac{1}{q_l}\Xil\wbls + \frac{1}{q_l}\Xil\wlt ,
\]
which depends on the random index~$l\in\{1,\ldots,n\}$.
Taking expectation with respect to~$l$ yields 
\[
\E_t[\uitp]=\sum_{k=1}^{n}q_k\left(\ubis- \frac{1}{q_k}\Xik\wbks
+ \frac{1}{q_k}\Xik\wkt\right) = \Xri\wt, 
\qquad i=1,2,\dots,m.
\]
Therefore $\uitp$ satisfies the assumption in Lemma~\ref{lem:dual-expect-t}. 
In order to bound its variance, we notice that
\[
 \sum_{k=1}^n q_k\left(\frac{1}{q_k}\Xik\wbks-\frac{1}{q_k}\Xik\wkt\right)
=\Xri\wbs - \Xri\wt = \ubis- \Xri\wt. 
\]
Using the relation between variance and the second moment, we have
\begin{eqnarray}
  \E_t\bigl[\|\uitp-\Xri\wt\|^2\bigr]
  &=& \sum_{k=1}^n q_k\Bigl\|\ubis-\frac{1}{q_k}\Xik\wbks
  +\frac{1}{q_k}\Xik\wkt - \Xri\wt \Bigr\|^2 \nonumber \\
  &=& \sum_{k=1}^n \frac{1}{q_k}\Bigl\|\Xik\wbks-\Xik\wkt\Bigr\|^2
  -\|\ubis-\Xri\wt\|^2 \nonumber \\
  &\leq& \sum_{k=1}^n \frac{1}{q_k}\Bigl\|\Xik(\wbks-\wkt)\Bigr\|^2
  \nonumber \\
  &\leq& \sum_{k=1}^n \frac{2\|\Xik\|^2}{q_k} \left(\|\wbks-\wkopt\|^2
  +\|\wkt-\wkopt\|^2\right) .
\label{eqn:uitp-variance}
\end{eqnarray}
Similarly, for $k=1,\ldots,n$, we have
\[
\E_t[\vktp] = \sum_{i=1}^m p_i\left( \vbks
-\frac{1}{p_i}\frac{1}{m}(\Xik)^T\abis
+\frac{1}{p_i}\frac{1}{m}(\Xik)^T\ait \right)
=\frac{1}{m}(\Xck)^T\at.
\]
Therefore $\vktp$ satisfies the assumption in Lemma~\ref{lem:primal-expect-t}. 
Furthermore, we have
\begin{eqnarray}
  \E_t\!\left[\Bigl\|\vktp-\frac{1}{m}(\Xck)^T\at\Bigl\|^2\right]
  &=& \sum_{i=1}^m p_i\Bigl\|\vbks-\frac{1}{p_i}\frac{1}{m}(\Xik)^T\abis
  +\frac{1}{p_i}\frac{1}{m}(\Xik)^T\ait
  - \frac{1}{m}(\Xck)^T\at\Bigr\|^2 \nonumber \\
  &=& \sum_{i=1}^m \frac{1}{p_i}\Bigl\|\frac{1}{m}(\Xik)^T\abis
  -\frac{1}{m}(\Xik)^T\ait \Bigr\|^2
  -\Bigl\|\vbks-\frac{1}{m}(\Xck)^T\at\Bigr\|^2 \nonumber \\
  &\leq& \sum_{i=1}^m \frac{1}{p_i}\Bigl\|\frac{1}{m}(\Xik)^T
  \left(\abis-\ait\right) \Bigr\|^2 \nonumber \\
  &\leq& \sum_{i=1}^m \frac{2\|\Xik\|^2}{p_i m^2}
  \left(\|\abis-\aiopt\|^2+\|\ait-\aiopt\|^2\right).
\label{eqn:vktp-variance}
\end{eqnarray}

Now we consider the two terms containing inner products 
in~\eqref{eqn:primal-dual-sum}.
Using the conditional expectation relation~\eqref{eqn:alpha-cond-mean},
we have
\begin{align}
\E_t\left[-\left\langle \aitp-\ait,\, \Xri(\wt-\wopt)\right\rangle\right]
&= p_i \E_t\left[ - \left\langle \aitld-\ait,\, 
  \Xri(\wt-\wopt)\right\rangle\right] \nonumber \\
&\geq p_i\E_t\left[-\frac{1}{c_i}\|\aitld-\ait\|^2
  -\frac{c_i}{4}\|\Xri(\wt-\wopt)\|^2 \right] \nonumber \\
&= -\frac{p_i}{c_i}\E_t\bigl[\|\aitld-\ait\|^2\bigr]
  -\frac{c_i p_i}{4}\|\Xri(\wt-\wopt)\|^2 \nonumber \\
&= -\frac{1}{c_i} \E_t\bigl[\|\aitp-\ait\|^2\bigr]
  -\frac{c_i p_i}{4}\|\Xri(\wt-\wopt)\|^2,
\label{eqn:alpha-inner-product}
\end{align}
where we used Young's inequality with $c_i$ being an arbitrary positive number,
and the last equality used~\eqref{eqn:alpha-change-norm2}.
We note that for any $n$ vectors $z_1,\ldots,z_n\in\R^{N_i}$, it holds that
\[
\Bigl\|\sum_{k=1}^n z_k\Bigr\|^2 \leq \sum_{k=1}^n \frac{1}{q_k}\|z_k\|^2.
\]
To see this, we let $z_{k,j}$ denote the $j$th component of~$z_k$ and use the
Cauchy-Schwarz inequality:
\begin{eqnarray*}
\biggl\|\sum_{k=1}^n z_k\biggr\|^2 
&=& \sum_{j=1}^{N_i} \left(\sum_{k=1}^n z_{k,j}\right)^2 
   =\sum_{j=1}^{N_i}\left(\sum_{k=1}^n\frac{z_{k,j}}{\sqrt{q_k}}\sqrt{q_k}
      \right)^2 \\
&\leq& \sum_{j=1}^{N_i}\left(\sum_{k=1}^n\biggl(\frac{z_{k,j}}{\sqrt{q_k}}
   \biggr)^2\right)\left(\sum_{k=1}^n\bigl(\sqrt{q_k}\bigr)^2\right) \\
&=& \sum_{j=1}^{N_i}\left(\sum_{k=1}^n\frac{z_{k,j}^2}{q_k}\right)
=\sum_{k=1}^n\frac{1}{q_k}\sum_{j=1}^{N_i}z_{k,j}^2
=\sum_{k=1}^n \frac{1}{q_k}\|z_k\|^2.
\end{eqnarray*}
Applying this inequality to the vector
$\Xri(\wt-\wopt)=\sum_{k=1}^n \Xik(\wkt-\wkopt)$, we get
\[
\|\Xri(\wt-\wopt)\|^2 \leq
\sum_{k=1}^n \frac{1}{q_k} \|\Xik(\wkt-\wkopt)\|^2.
\]
Therefore we can continue the inequality~\eqref{eqn:alpha-inner-product},
for each $i=1,\ldots,m$, as
\begin{eqnarray}
&&\E_t\!\left[-\left\langle \aitp-\ait,\, \Xri(\wt-\wopt)\right\rangle\right]
\nonumber \\
&\geq & -\frac{1}{c_i} \E_t\bigl[\|\aitp-\ait\|^2\bigr]
-\frac{c_i p_i}{4}\sum_{k=1}^n\frac{1}{q_k}\|\Xik(\wkt-\wkopt)\|^2 \nonumber \\
&\geq& -\frac{1}{c_i} \E_t\bigl[\|\aitp-\ait\|^2\bigr]
-\frac{c_i p_i}{4}\sum_{k=1}^n\frac{1}{q_k}\|\Xik\|^2\|\wkt-\wkopt\|^2 .
\label{eqn:dual-inner-prod-bound}
\end{eqnarray}
Using similarly arguments, we can obtain, for each $k=1,\ldots,n$ and 
arbitrary $h_k>0$,
\begin{eqnarray}
&&  \E_t\Bigl[\Bigl\langle \wktp-\wkt,\, \frac{1}{m}(\Xck)^T(\at-\aopt)
  \Bigr\rangle\Bigr] \nonumber \\
&\geq& -\frac{1}{h_k}\E_t\bigl[\|\wktp-\wkt\|^2\bigr]
-\frac{h_k q_k}{4 m^2}\sum_{i=1}^m\frac{1}{p_i}\|\Xik\|^2\|\ait-\aiopt\|^2.
\label{eqn:primal-inner-prod-bound}
\end{eqnarray}

Applying the bounds in~\eqref{eqn:uitp-variance},
\eqref{eqn:vktp-variance}, \eqref{eqn:dual-inner-prod-bound}
and~\eqref{eqn:primal-inner-prod-bound} 
to~\eqref{eqn:primal-dual-sum} and rearranging terms, we have
\begin{eqnarray*}
&&\sum_{i=1}^{m}\frac{1}{m}\left[ 
  \frac{1}{p_i} \left(\frac{1}{2\sigma_i}+\gamma_i\right) - \gamma_i
  + \sum_{k=1}^n\frac{b_k\|\Xik\|^2}{2mp_i} 
  + \sum_{k=1}^n\frac{h_k\|\Xik\|^2}{4mp_i}
  \right] \|\ait-\aiopt\|^2 \\ 
&& + \sum_{k=1}^{n}\left[ 
  \frac{1}{q_k} \left(\frac{1}{2\tau_k}+\lambda\right) - \lambda
  + \sum_{i=1}^m\frac{a_i\|\Xik\|^2}{2mq_k}
  + \sum_{i=1}^m\frac{c_i\|\Xik\|^2}{4mq_k}
  \right] \|\wkt-\wkopt\|^2 \\
&& +\sum_{i=1}^m\sum_{k=1}^n \frac{b_k\|\Xik\|^2}{2m^2 p_i}\|\abis-\aiopt\|^2
 + \sum_{i=1}^m\sum_{k=1}^n \frac{a_i\|\Xik\|^2}{2m q_k}\|\wbks-\wkopt\|^2 \\
&\geq&  \sum_{i=1}^m \frac{1}{m p_i}\left(\frac{1}{2\sigma_i}+\gamma_i\right)
  \E_t[\|\aitp-\aiopt\|^2] 
  + \sum_{k=1}^n \frac{1}{q_k}\left(\frac{1}{2\tau_k}+\lambda\right)
  \E_t[\|\wktp-\wkopt\|^2] \\
&& + \sum_{i=1}^m \frac{1}{m p_i}\left(\frac{1}{2\sigma_i}-\frac{1}{a_i}
  - \frac{1}{c_i} \right) \E_t[\|\aitp-\ait\|^2] \\ 
&& + \sum_{k=1}^n \frac{1}{q_k}\left(\frac{1}{2\tau_k}-\frac{1}{b_k}
  - \frac{1}{h_k} \right) \E_t[\|\wktp-\wkt\|^2]. 
\end{eqnarray*}
The desired result~\eqref{eqn:svrg-pd-one-step}
is obtained by choosing $a_i=c_i=4\sigma_i$ and $b_k=h_k=4\tau_k$.
\end{proof}

Finally, we are ready to prove Theorem~\ref{thm:dscovr-svrg}.
Let $\theta\in(0,1)$ be a parameter to be determined later, 
and let~$\Gamma$ and~$\eta$ be two constants such that
\begin{eqnarray}
\Gamma &\geq& \max_{i,k}\left\{ 
\frac{1}{p_i}\left(1+\frac{3\|\Xik\|^2}{2\theta q_k \lambda\gamma_i}\right),\;
\frac{1}{q_k}\left(1+\frac{3n\|\Xik\|^2}{2\theta m p_i \lambda\gamma_i}\right)
\right\},  \label{eqn:Gamma-def-theta} \\
\eta &=& 1-\frac{1-\theta}{\Gamma}. \label{eqn:eta-def-theta}
\end{eqnarray}
It is easy to check that $\Gamma>1$ and $\eta\in(0,1)$.
By the choices of $\sigma_i$ and $\tau_k$ in~\eqref{eqn:sigma-i-svrg}
and~\eqref{eqn:tau-k-svrg} respectively, we have 
\begin{equation}\label{eqn:sigma-tau-Gamma}
\frac{1}{p_i}\left(1 + \frac{1}{2\sigma_i \gamma_i}\right)
=\frac{1}{q_k}\left(1 + \frac{1}{2\tau_k \lambda}\right)
=\Gamma,
\end{equation}
for all $i=1,\ldots,m$ and $k=1,\ldots,n$.
Comparing the above equality with the definition of $\Gamma$ 
in~\eqref{eqn:Gamma-def-theta}, we have
\[
\frac{3\|\Xik\|^2}{2\theta q_k \lambda\gamma_i}
\leq\frac{1}{2\sigma_i \gamma_i}
\qquad\textrm{and}\qquad
\frac{3n\|\Xik\|^2}{2\theta m p_i \lambda\gamma_i}
\leq\frac{1}{2\tau_k \lambda},
\qquad
\]
which implies
\[
  \frac{3\sigma_i\|\Xik\|^2}{q_k} \leq \theta\lambda 
  \qquad\textrm{and}\qquad
  \frac{3n\tau_k\|\Xik\|^2}{m p_i} \leq \theta\gamma_i,
\]
for all $i=1,\ldots,m$ and $k=1,\ldots,n$. Therefore, we have
\begin{eqnarray}
&&  \sum_{k=1}^n\frac{3\tau_k\|\Xik\|^2}{m p_i}
  = \frac{1}{n}\sum_{k=1}^n\frac{3n\tau_k\|\Xik\|^2}{mp_i}
  \leq \frac{1}{n}\sum_{k=1}^n \theta\gamma_i
  = \theta\gamma_i, 
  \qquad i=1,\ldots,m, \label{eqn:sum-tau-k}\\
&&  \sum_{i=1}^m\frac{3\sigma_i\|\Xik\|^2}{m q_k}
  = \frac{1}{m}\sum_{i=1}^m\frac{3\sigma_i\|\Xik\|^2}{q_k}
  \leq \frac{1}{m}\sum_{i=1}^m \theta\lambda 
  = \theta\lambda, 
  ~\qquad k=1,\ldots,n. \label{eqn:sum-sigma-i}
\end{eqnarray}
Now we consider the inequality~\eqref{eqn:svrg-pd-one-step}, and examine
the ratio between the coefficients of $\|\ait-\aiopt\|^2$ and
$\E_t[\|\aitp-\aiopt\|^2]$.
Using~\eqref{eqn:sum-tau-k} and~\eqref{eqn:sigma-tau-Gamma}, we have
\begin{equation}\label{eqn:dual-coeff-ratio}
\frac{\frac{1}{p_i}\left(\frac{1}{2\sigma_i}+\gamma_i
\right)-\gamma_i+\sum_{k=1}^n\frac{3\tau_k\|\Xik\|^2}{m p_i}}{
  \frac{1}{p_i}\left(\frac{1}{2\sigma_i}+\gamma_i\right)}
\leq 1 - 
\frac{(1-\theta)\gamma_i}{\frac{1}{p_i}\left(\frac{1}{2\sigma_i}+\gamma_i\right)}
=1-\frac{1-\theta}{\Gamma} = \eta.
\end{equation}
Similarly, the ratio between the coefficients of $\|\wkt-\wkopt\|^2$ and
$\E_t[\|\wktp-\wkopt\|^2]$ can be bounded 
using~\eqref{eqn:sum-sigma-i} and~\eqref{eqn:sigma-tau-Gamma}:
\begin{equation}\label{eqn:primal-coeff-ratio}
\frac{\frac{1}{q_k}\left(\frac{1}{2\tau_k}+\lambda
\right)-\lambda+\sum_{i=1}^m\frac{3\sigma_i\|\Xik\|^2}{m q_k}}{
  \frac{1}{q_k}\left(\frac{1}{2\tau_k}+\lambda\right)}
\leq 1 - 
\frac{(1-\theta)\lambda}{\frac{1}{q_k}\left(\frac{1}{2\tau_k}+\lambda\right)}
=1-\frac{1-\theta}{\Gamma} = \eta.
\end{equation}
In addition, the ratio between the coefficients of
$\|\abis-\aiopt\|^2$ and $\E_t[\|\aitp-\aiopt\|^2]$ and that of
$\|\wbks-\wkopt\|^2$ and $\E_t[\|\wktp-\wkopt\|^2]$ can be bounded as
\begin{eqnarray}
\frac{\sum_{k=1}^k\frac{2\tau_k\|\Xik\|^2}{m p_i}}{
  \frac{1}{p_i}\left(\frac{1}{2\sigma_i}+\gamma_i\right)}
\leq
\frac{(2/3)\theta\gamma_i}{\frac{1}{p_i}\left(\frac{1}{2\sigma_i}+\gamma_i\right)}
=\frac{(2/3)\theta}{\Gamma} = \frac{2\theta(1-\eta)}{3(1-\theta)}, 
\label{eqn:dual-stage-ratio} \\
\frac{\sum_{i=1}^m\frac{3\sigma_i\|\Xik\|^2}{m q_k}}{
  \frac{1}{q_k}\left(\frac{1}{2\tau_k}+\lambda\right)}
\leq \frac{(2/3)\theta\lambda}{
  \frac{1}{q_k}\left(\frac{1}{2\tau_k}+\lambda\right)}
=\frac{(2/3)\theta}{\Gamma} = \frac{2\theta(1-\eta)}{3(1-\theta)}. 
\label{eqn:primal-stage-ratio}
\end{eqnarray}
Using~\eqref{eqn:sigma-tau-Gamma} and the four inequalities
\eqref{eqn:dual-coeff-ratio}, \eqref{eqn:primal-coeff-ratio},
\eqref{eqn:dual-stage-ratio} and \eqref{eqn:primal-stage-ratio},
we conclude that the inequality~\eqref{eqn:svrg-pd-one-step} implies
\begin{eqnarray*}
&& \eta \sum_{i=1}^m \frac{\Gamma\gamma_i}{m}\|\ait-\aiopt\|^2
  + \frac{2\theta(1-\eta)}{3(1-\theta)} 
  \sum_{i=1}^m \frac{\Gamma\gamma_i}{m}\|\abis-\aiopt\|^2 \\
&& + \eta \sum_{k=1}^n \Gamma\lambda\|\wkt-\wkopt\|^2 
  + \frac{2\theta(1-\eta)}{3(1-\theta)} 
  \sum_{k=1}^n \Gamma\lambda\|\wbks-\wkopt\|^2 \\
&\geq&
\sum_{i=1}^m \frac{\Gamma\gamma_i}{m}\E_t[\|\aitp-\aiopt\|^2]
  + \sum_{k=1}^n \Gamma\lambda\E_t[\|\wktp-\wkopt\|^2].
\end{eqnarray*}
Using the definite of $\Omega(\cdot)$ in~\eqref{eqn:Omega}, 
the inequality above is equivalent to
\begin{eqnarray}
&&  \eta \,\Omega\bigl(\wt-\wopt,\at-\aopt\bigr)
  + \frac{2\theta(1-\eta)}{3(1-\theta)} 
    \Omega\bigl(\wbs-\wopt,\abs-\aopt\bigr)
\nonumber \\
&\geq& \E_t\!\left[\Omega\bigl(\wtp-\wopt,\atp-\aopt\bigr)\right].
\label{eqn:svrg-pd-one-step-simple}
\end{eqnarray}

To simplify further derivation, we define
\begin{eqnarray*}
\Dt &=&\E\left[\Omega\bigl(\wt-\wopt,\at-\aopt\bigr)\right],\\
\Dbs &=&\E\left[\Omega\bigl(\wbs-\wopt,\abs-\aopt\bigr)\right],
\end{eqnarray*}
where the expectation is taken with respect to all randomness in the $s$th
stage, that is, the random variables 
$\{(j^{(0)},l^{(0)}), (j^{(1)},l^{(1)}),\ldots, (j^{(M-1)},l^{(M-1)})\}$.
Then the inequality~\eqref{eqn:svrg-pd-one-step-simple} implies
\[
  \frac{2\theta(1-\eta)}{3(1-\theta)}\Dbs + \eta\Dt \geq \Dtp.
\]
Dividing both sides of the above inequality by $\eta^{t+1}$ gives
\[
  \frac{2\theta(1-\eta)}{3(1-\theta)}\frac{\Dbs}{\eta^{t+1}}
  + \frac{\Dt}{\eta^t} \geq \frac{\Dtp}{\eta^{t+1}} .
\]
Summing for $t=0,1,,\ldots,M-1$ gives 
\[
  \left(\frac{1}{\eta}+\frac{1}{\eta^2}+\cdots+\frac{1}{\eta^M}\right)
  \frac{2\theta(1-\eta)}{3(1-\theta)}\Dbs
  + \Dini \geq \frac{\DT}{\eta^M} ,
\]
which further leads to
\[
(1-\eta^M)\frac{2\theta}{3(1-\theta)}\Dbs + \eta^M\Dini \geq \Delta^{(M)}.
\]
Now choosing $\theta=1/3$ and using the relation $\Dbs=\Dini$ for each stage,
we obtain
\[
\left(\frac{1}{3}+\frac{2}{3}\eta^M\right)\Dini \geq \Delta^{(M)}.
\]
Therefore if we choose $M$ large enough such that $\eta^M\leq\frac{1}{2}$, then
\[
\Delta^{(M)}\leq\frac{2}{3}\Dini, \quad \textrm{or~equivalently,}\quad
\Dbsp\leq\frac{2}{3}\Dbs.
\]
The condition $\eta^M\leq\frac{1}{2}$ is equivalent to
$M\geq\frac{\log(2)}{\log(1/\eta)}$, which can be guaranteed by
\[
  M\geq \frac{\log(2)}{1-\eta} = \frac{\log(2)}{1-\theta}\Gamma
  =\frac{3\log(2)}{2}\Gamma = \log(\sqrt{8})\Gamma.
\]
To further simplify, it suffices to have $M\geq\log(3)\Gamma$.
Finally, we notice that $\Dbsp\leq(2/3)\Dbs$ implies
$\Dbs\leq\left(2/3\right)^s \Dbini$.
which is the desired result in Theorem~\ref{thm:dscovr-svrg}.

\subsection{Alternative bounds and step sizes}
\label{sec:proof-Fro-step-sizes}
Alternatively, we can let $\Gamma$ to satisfy
\begin{equation}\label{eqn:Gamma-Fro-theta}
\Gamma \geq \max_{i,k}\left\{ 
\frac{1}{p_i}\left(1+\frac{3\|\Xck\|_F^2}{2\theta mq_k\lambda\gamma_i}\right),\;
\frac{1}{q_k}\left(1+\frac{3\|\Xri\|_F^2}{2\theta mp_i\lambda\gamma_i}\right)
\right\},  
\end{equation}
where $\|\cdot\|_F$ denotes the Frobenius norm of a matrix.
Then by choosing $\sigma_i$ and $\tau_k$ that 
satisfy~\eqref{eqn:sigma-tau-Gamma}, we have
\[
  \frac{3\sigma_i\|\Xck\|_F^2}{m q_k} \leq \theta\lambda 
  \qquad\textrm{and}\qquad
  \frac{3\tau_k\|\Xri\|_F^2}{m p_i} \leq \theta\gamma_i,
\]
We can bound the left-hand sides in~\eqref{eqn:sum-tau-k} 
and~\eqref{eqn:sum-sigma-i} using H\"older's inequality, which results in
\begin{eqnarray}
&&  \sum_{k=1}^n\frac{3\tau_k\|\Xik\|^2}{m p_i}
  \leq \sum_{k=1}^n\frac{3\tau_k\|\Xik\|_F^2}{m p_i}
  \leq \frac{3\max_k\{\tau_k\}\|\Xri\|_F^2}{m p_i} \leq \theta\gamma_i, 
  \qquad i=1,\ldots,m, \label{eqn:sum-tau-k-Fro}\\
&&  \sum_{i=1}^m\frac{3\sigma_i\|\Xik\|^2}{m q_k}
  \leq \sum_{i=1}^m\frac{3\sigma_i\|\Xik\|_F^2}{m q_k}
  \leq \frac{3\max_i\{\sigma_i\}\|\Xck\|_F^2}{m q_k} \leq \theta\lambda, 
  \qquad k=1,\ldots,n. \label{eqn:sum-sigma-i-Fro}
\end{eqnarray}
The rest of the proof hold without any change.
Setting $\theta=1/3$ gives the condition on~$\Gamma$ 
in~\eqref{eqn:Gamma-Fro-bound}.

In Theorem~\ref{thm:dscovr-svrg} and the proof above, 
we choose $\Gamma$ as a uniform bound over all combinations of $(i,k)$ 
in order to obtain a uniform convergence rates on all blocks of the
primal and dual variables $\wkt$ and $\ait$, so we have a simple conclusion
as in~\eqref{eqn:svrg-stage-converge}.
In practice, we can use different bounds on different blocks and 
choose step sizes to allow them to converge at different rates. 

For example, we can choose the step sizes $\sigma_i$ and $\tau_k$ such that
\begin{eqnarray*}
\frac{1}{p_i}\left(1+\frac{1}{2\sigma_i\gamma_i}\right)
&=& \max_k \left\{
\frac{1}{p_i}\left(1+\frac{3\|\Xck\|_F^2}{2\theta m q_k \lambda\gamma_i}\right)
\right\}, \qquad i=1,\ldots,m, \\
\frac{1}{q_k}\left(1+\frac{1}{2\tau_k\lambda}\right)
&=& \max_i \left\{
\frac{1}{q_k}\left(1+\frac{3\|\Xri\|_F^2}{2\theta m p_i \lambda\gamma_i}\right)
\right\}, \qquad k=1,\ldots,n.
\end{eqnarray*}
Then the inequalities~\eqref{eqn:sum-tau-k-Fro} and~\eqref{eqn:sum-sigma-i-Fro} 
still hold, and we can still show linear convergence with a similar rate.
In this case, the step sizes are chosen as
\begin{eqnarray*}
\sigma_i &=& \min_k \left\{\frac{\theta m q_k \lambda}{3\|\Xck\|_F^2}\right\},
  \qquad i=1,\ldots,m, \\
\tau_k &=& \min_i \left\{\frac{\theta m p_i \gamma_i}{3\|\Xri\|_F^2}\right\},
  \qquad k=1,\ldots,n.
\end{eqnarray*}
If we choose the probabilities to be proportional to the norms of the data
blocks, i.e., 
\begin{eqnarray*}
  p_i = \frac{\|\Xri\|_F^2}{\|X\|_F^2}, \qquad
  q_k = \frac{\|\Xck\|_F^2}{\|X\|_F^2}, 
\end{eqnarray*}
then we have 
\begin{eqnarray*}
\sigma_i = \frac{\theta m\lambda}{3\|X\|_F^2}, \qquad
\tau_k = \frac{\theta m\gamma_i}{3\|X\|_F^2}.
\end{eqnarray*}
If we further normalize the rows of $X$, and let $R$ be the norm of each row,
then (with $\theta=1/3$)
\begin{eqnarray*}
\sigma_i=\frac{\theta\lambda}{3R^2}\frac{m}{N}=\frac{\lambda}{9R^2}\frac{m}{N},
\qquad
\tau_k=\frac{\theta\gamma_i}{3R^2}\frac{m}{N}=\frac{\gamma_i}{9R^2}\frac{m}{N}.
\end{eqnarray*}
For distributed ERM, we have $\gamma_i=\frac{N}{m}\gamma$, thus
$\tau_k = \frac{\gamma}{9R^2}$ as in~\eqref{eqn:simple-step-size-R}.

\section{Proof of Theorem~\ref{thm:dscovr-svrg-obj}}
\label{sec:proof-svrg-obj}

Consider the following saddle-point problem with doubly separable structure:
\begin{equation}\label{eqn:saddle-block}
\min_{w\in\R^D}~\max_{\alpha\in\R^N}~ \biggl\{
\Lagr(w,\alpha) ~\equiv~ 
\frac{1}{m}\sum_{i=1}^{m}\sum_{k=1}^n\ai^T \Xik \wk
-\frac{1}{m}\sum_{i=1}^{m}f_i^*(\ai)+  
\sum_{k=1}^n g_k(\wk) \biggr\}.
\end{equation}
Under Assumption~\ref{asmp:convexity}, $L$ has a unique saddle point
$(\wopt,\aopt)$.
We define
\begin{eqnarray}
\Ptld_k(\wk) &\equiv& \frac{1}{m}(\aopt)^T \Xck \wk + g_k(\wk)
- \frac{1}{m}(\aopt)^T \Xck \wkopt - g_k(\wkopt), \qquad k=1,\ldots,n, 
\label{eqn:Ptld-def} \\
\Dtld_i(\ai) &\equiv& \frac{1}{m}\left(\ai^T \Xri \wopt - f_i^*(\ai) 
- (\aiopt)^T \Xri\wopt + f_i^*(\aiopt) \right), \qquad i=1,\ldots,m.
\label{eqn:Dtld-def}
\end{eqnarray}
We note that $\wkopt$ is the minimizer of $\Ptld_k$ with $\Ptld_k(\wkopt)=0$
and $\aiopt$ is the maximizer of $\Dtld_i$ with $\Dtld_i(\aiopt)=0$. 
Moreover, by the assumed strong convexity,
\begin{equation}\label{eqn:Ptld-Dtld-quadratic}
\Ptld_k(\wk) ~\geq~ \frac{\lambda}{2}\|\wk - \wkopt\|^2, \qquad
\Dtld_i(\ai) ~\leq~ -\frac{\gamma_i}{2m}\|\ai - \aiopt\|^2. 
\end{equation}
Moreover, 
we have the following lower bound for the duality gap $P(w)-D(\alpha)$:
\begin{equation}\label{eqn:Ptld-Dtld-lower-bound}
\sum_{k=1}^n \Ptld_k(w_k) - \sum_{i=1}^m \Dtld_i(\alpha_i) 
~=~ \Lagr(w,\aopt) - \Lagr(\wopt,\alpha) 
~\leq~ P(w) -  D(\alpha).
\end{equation}
We can also use them to derive an upper bound for the duality gap, 
as in the following lemma.
\begin{lemma}\label{eqn:pd-gap-upper-bound}
Suppose Assumption~\ref{asmp:convexity} holds.
Let $(\wopt,\aopt)$ be the saddle-point of $L(w,\alpha)$ and define
\[
P(w) = \sup_{\alpha} L(w,\alpha), \qquad
D(\alpha) = \inf_{w} L(w,\alpha).
\]
Then we have
\[
P(w)-D(\alpha) ~\leq~ \Lagr(w,\aopt) - \Lagr(\wopt,\alpha) 
+\biggl(\frac{1}{m}\sum_{i=1}^m\frac{\|\Xri\|^2}{2\gamma_i}\biggr)\|w-\wopt\|^2 
+ \frac{\|X\|^2}{2m^2\lambda}\|\alpha-\aopt\|^2.
\]
\end{lemma}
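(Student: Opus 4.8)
The plan is to express the duality gap $P(w)-D(\alpha)$ as a sum of three differences of $\Lagr$ evaluated at carefully chosen points, and to bound the two ``extra'' terms (those not equal to $\Lagr(w,\aopt)-\Lagr(\wopt,\alpha)$) by a Polyak--{\L}ojasiewicz-type inequality combined with the stationarity conditions at the saddle point.

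The one elementary tool I would record first: if $h$ is $\mu$-strongly convex with minimizer $z^\star$, then $h(z)-h(z^\star)\le\frac{1}{2\mu}\|\xi\|^2$ for every $z$ and every subgradient $\xi\in\partial h(z)$ — this follows by writing $h(z^\star)\ge h(z)+\langle\xi,z^\star-z\rangle+\frac{\mu}{2}\|z^\star-z\|^2$ and maximizing the right-hand side over $z^\star-z$; a mirror statement holds for strongly concave functions and their maximizers, with supergradients. Under Assumption~\ref{asmp:convexity}, $\Lagr(w,\cdot)$ from~\eqref{eqn:Lagrangian} is separable over the blocks $\alpha_i$ and $(\gamma_i/m)$-strongly concave in each (since $f_i^*$ is $\gamma_i$-strongly convex), so $P(w)=\sup_\alpha\Lagr(w,\alpha)$ is attained at a unique $\alpha^+(w)$; similarly $\Lagr(\cdot,\alpha)$ is $\lambda$-strongly convex, so $D(\alpha)=\inf_w\Lagr(w,\alpha)$ is attained at a unique $w^+(\alpha)$. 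Then
\[
P(w)-D(\alpha)
=\underbrace{\bigl(\Lagr(w,\alpha^+(w))-\Lagr(w,\aopt)\bigr)}_{(\mathrm{I})}
+\bigl(\Lagr(w,\aopt)-\Lagr(\wopt,\alpha)\bigr)
+\underbrace{\bigl(\Lagr(\wopt,\alpha)-\Lagr(w^+(\alpha),\alpha)\bigr)}_{(\mathrm{II})},
\]
so it suffices to bound $(\mathrm{I})$ and $(\mathrm{II})$.

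For $(\mathrm{I})$, the $i$th block map $\alpha_i\mapsto\frac{1}{m}\bigl(\alpha_i^T X_i w-f_i^*(\alpha_i)\bigr)$ is $(\gamma_i/m)$-strongly concave with maximizer $\alpha_i^+(w)$, and a supergradient at $\aopt_i$ is $\frac{1}{m}\bigl(X_i w-s_i\bigr)$ for any $s_i\in\partial f_i^*(\aopt_i)$. The saddle-point condition in $\alpha$ gives $X_i\wopt\in\partial f_i^*(\aopt_i)$ (equivalently $\aopt_i=\nabla f_i(X_i\wopt)$), hence $\frac{1}{m}X_i(w-\wopt)$ is such a supergradient, and the concave form of the inequality above yields, after summing over $i$ (the $g(w)$ term cancels),
\[
(\mathrm{I})~\le~\sum_{i=1}^m\frac{1}{2m\gamma_i}\bigl\|X_i(w-\wopt)\bigr\|^2
~\le~\Bigl(\frac{1}{m}\sum_{i=1}^m\frac{\|\Xri\|^2}{2\gamma_i}\Bigr)\|w-\wopt\|^2 .
\]
Symmetrically, for $(\mathrm{II})$ the map $w\mapsto\frac{1}{m}(X^T\alpha)^T w+g(w)$ is $\lambda$-strongly convex with minimizer $w^+(\alpha)$, and the saddle-point condition in $w$ gives $-\frac{1}{m}X^T\aopt\in\partial g(\wopt)$, so $\frac{1}{m}X^T(\alpha-\aopt)$ is a subgradient of $\Lagr(\wopt,\cdot)$ at $\wopt$; the convex inequality then gives $(\mathrm{II})\le\frac{1}{2\lambda m^2}\|X^T(\alpha-\aopt)\|^2\le\frac{\|X\|^2}{2m^2\lambda}\|\alpha-\aopt\|^2$. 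Adding the three pieces gives the claim. I expect the only real work to be bookkeeping — tracking the $1/m$ factors and the block strong-convexity constants, and treating a possibly nonsmooth $g$ via the subgradient versions of both the PL-type inequality and the saddle-point stationarity conditions — rather than any conceptual difficulty.
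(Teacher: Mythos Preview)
Your proof is correct. The decomposition $P(w)-D(\alpha)=(\mathrm{I})+\bigl(\Lagr(w,\aopt)-\Lagr(\wopt,\alpha)\bigr)+(\mathrm{II})$ is exactly the one the paper uses implicitly, but the way you bound $(\mathrm{I})$ and $(\mathrm{II})$ is the Fenchel dual of the paper's argument. The paper bounds $P(w)-\Lagr(w,\aopt)$ by applying the smoothness descent lemma to $F(w)=\frac{1}{m}\sum_i f_i(\Xri w)$, using that $\nabla F(\wopt)=\frac{1}{m}X^T\aopt$ and that $\nabla F$ is $\bigl(\frac{1}{m}\sum_i\|\Xri\|^2/\gamma_i\bigr)$-Lipschitz; symmetrically it bounds $\Lagr(\wopt,\alpha)-D(\alpha)$ via smoothness of $g^*$ composed with $-\frac{1}{m}X^T$. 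You instead stay on the saddle-function side and invoke the PL-type inequality coming from strong concavity of $\Lagr(w,\cdot)$ (blockwise in $\alpha_i$) and strong convexity of $\Lagr(\cdot,\alpha)$, combined with the saddle-point first-order conditions to identify the relevant (sub/super)gradients at $(\wopt,\aopt)$. Both routes are elementary and yield the identical constants; your version has the minor advantage that it never needs to name the composite smoothness constants of $F$ or of $g^*\!\circ(-\tfrac{1}{m}X^T)$ and handles nonsmooth $g$ transparently through subgradients, whereas the paper's version makes the connection to the primal problem~\eqref{eqn:erm-m} and its smoothness structure more explicit.
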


\begin{proof}
By definition, the primal function can be written as
$P(w) = F(w) + g(w)$, where 
\[
  F(w) = \frac{1}{m}\sum_{i=1}^m f_i(\Xri w) 
  = \frac{1}{m}\max_\alpha\biggl\{\alpha^T X w 
    - \sum_{i=1}^m f_i^*(\alpha_i)\biggr\}.
\]
From the optimality conditions satisfied by the saddle point $(\wopt,\aopt)$,
we have
\[
\nabla F(\wopt) = \frac{1}{m}X^T\aopt.
\]
By assumption, $\nabla F(w)$ is Lipschitz continuous with smooth constant
$\frac{1}{m}\sum_{i=1}^m\frac{\|\Xri\|^2}{\gamma_i}$, which implies
\begin{eqnarray*}
F(w) 
&\leq& F(\wopt) + \langle \nabla F(\wopt),\,w-\wopt\rangle
+\biggl(\frac{1}{m}\sum_{i=1}^m\frac{\|\Xri\|^2}{2\gamma_i}\biggr)
\|w-\wopt\|^2\\
&=& \frac{1}{m}\biggl((\aopt)^T X\wopt-\sum_{i=1}^m f_i^*(\aiopt) \biggr)
  + \frac{1}{m} (\aopt)^T X(w-\wopt) 
+\biggl(\frac{1}{m}\sum_{i=1}^m\frac{\|\Xri\|^2}{2\gamma_i}\biggr)
\|w-\wopt\|^2\\
&=& \frac{1}{m}\biggl((\aopt)^T X w-\sum_{i=1}^m f_i^*(\aiopt) \biggr)
+\biggl(\frac{1}{m}\sum_{i=1}^m\frac{\|\Xri\|^2}{2\gamma_i}\biggr)
\|w-\wopt\|^2.
\end{eqnarray*}
Therefore,
\begin{eqnarray*}
P(w) &=& F(w) + g(w) \\
&\leq& \frac{1}{m}(\aopt)^T X w-\frac{1}{m}\sum_{i=1}^m f_i^*(\aiopt) + g(w)
+\biggl(\frac{1}{m}\sum_{i=1}^m\frac{\|\Xri\|^2}{2\gamma_i}\biggr)
\|w-\wopt\|^2\\
&=& \Lagr(w,\aopt) 
+\biggl(\frac{1}{m}\sum_{i=1}^m\frac{\|\Xri\|^2}{2\gamma_i}\biggr)
\|w-\wopt\|^2.
\end{eqnarray*}
Using similar arguments, especially that $\nabla g^*(\alpha)$ has Lipschitz
constant $\frac{\|X\|}{m^2 \lambda}$, we can show that
\[
D(\alpha) ~\geq~ L(\wopt,\alpha)-\frac{\|X\|^2}{2m^2\lambda}\|\alpha-\aopt\|^2.
\]
Combining the last two inequalities gives the desired result.
\end{proof}

The rest of the proof follow similar steps as in the proof of
Theorem~\ref{thm:dscovr-svrg}.
The next two lemmas are variants of Lemmas~\ref{lem:dual-expect-t}
and~\ref{lem:primal-expect-t}.

\begin{lemma}\label{lem:dual-expect-t-obj}
Under the same assumptions and setup in Lemma~\ref{lem:dual-expect-t}, 
we have
\begin{eqnarray}
&&\sum_{i=1}^{m}\left( \frac{1}{p_i} 
\left(\frac{1}{2\sigma_i}+\frac{\gamma_i}{2}\right)
- \frac{\gamma_i}{2}\right) \|\ait-\aiopt\|^2 
-\sum_{i=1}^m\left(\frac{1}{p_i}-1\right)m \Dtld_i(\ait) \nonumber\\
&\geq&  \sum_{i=1}^m \frac{1}{p_i}\left(\frac{1}{2\sigma_i}
+\frac{\gamma_i}{2}\right)  \E_t[\|\aitp\!\!-\aiopt\|^2] 
  + \sum_{i=1}^m \frac{1}{2p_i\sigma_i}\E_t[\|\aitp\!\!-\ait\|^2] 
  -\sum_{i=1}^m\frac{m}{p_i}\E_t\bigl[\Dtld_i(\aitp)\bigr] 
  \nonumber \\
&& + \left\langle \wt - \wopt,\, X^T(\aopt-\at) \right\rangle 
 - \sum_{i=1}^m \frac{1}{p_i} \E_t \!\left[\left\langle\aitp-\ait,\,
  \uitp - \Xri\wopt \right\rangle \right] .
\label{eqn:dual-one-step-sum-obj}
\end{eqnarray}
\end{lemma}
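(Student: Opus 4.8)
The plan is to follow the proof of Lemma~\ref{lem:dual-expect-t} almost verbatim, changing only two things. First, in the step that uses strong convexity of the proximal subproblem~\eqref{eqn:alpha-tilde}, I will \emph{not} add the auxiliary inequality coming from the $\gamma_i$-strong convexity of $f_i^*(\cdot)-\langle\cdot,\Xri\wopt\rangle$; this is exactly why the modulus that survives here is $\frac{1}{2\sigma_i}+\frac{\gamma_i}{2}$ rather than $\frac{1}{2\sigma_i}+\gamma_i$. Instead, I will keep the conjugate-value difference $f_i^*(\aitld)-f_i^*(\aiopt)$ and rewrite it via the identity $f_i^*(\ai)-f_i^*(\aiopt) = -m\Dtld_i(\ai)+\langle\ai-\aiopt,\Xri\wopt\rangle$, which is immediate from the definition~\eqref{eqn:Dtld-def} of $\Dtld_i$. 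Second, I will \emph{not} apply Young's inequality to the residual inner product; the term $\langle\aitld-\ait,\,\uitp-\Xri\wopt\rangle$ is carried along unchanged, which produces the last term on the right-hand side of~\eqref{eqn:dual-one-step-sum-obj} and leaves the coefficient of $\E_t[\|\aitp-\ait\|^2]$ equal to $\frac{1}{2p_i\sigma_i}$.

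Concretely, the steps are as follows. (1) Fix $i$. Strong convexity of the objective in~\eqref{eqn:alpha-tilde} with modulus $\frac{1}{\sigma_i}+\gamma_i$, evaluated at $\aiopt$, gives after rearranging
\[
\frac{\|\aiopt-\ait\|^2}{2\sigma_i} \ge \frac{\|\aitld-\ait\|^2}{2\sigma_i} + \Bigl(\frac{1}{2\sigma_i}+\frac{\gamma_i}{2}\Bigr)\|\aitld-\aiopt\|^2 + \langle\aiopt-\aitld,\,\uitp\rangle + f_i^*(\aitld)-f_i^*(\aiopt).
\]
Substituting the $\Dtld_i$ identity and combining the two inner products turns the last three terms into $-\langle\aitld-\aiopt,\,\uitp-\Xri\wopt\rangle - m\Dtld_i(\aitld)$. (2) Write $-\langle\aitld-\aiopt,\,\uitp-\Xri\wopt\rangle = -\langle\aitld-\ait,\,\uitp-\Xri\wopt\rangle - \langle\ait-\aiopt,\,\uitp-\Xri\wopt\rangle$, take the conditional expectation $\E_t$, and use $\E_t[\uitp]=\Xri\wt$ to replace the second piece by $-\langle\ait-\aiopt,\,\Xri(\wt-\wopt)\rangle$. (3) Convert each $\aitld$-term to an $\aitp$-term by the law of total expectation, exactly as in~\eqref{eqn:alpha-cond-mean}--\eqref{eqn:alpha-change-opt}, together with $\E_t[\Dtld_i(\aitp)] = p_i\E_t[\Dtld_i(\aitld)]+(1-p_i)\Dtld_i(\ait)$ and, since $\aitld$ and $\uitp$ are independent of the index~$j$, $\E_t[\langle\aitld-\ait,\,\uitp-\Xri\wopt\rangle] = \frac{1}{p_i}\E_t[\langle\aitp-\ait,\,\uitp-\Xri\wopt\rangle]$. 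Moving the $\|\ait-\aiopt\|^2$ and $\Dtld_i(\ait)$ contributions to the left, their coefficients collapse to $\frac{1}{p_i}\bigl(\frac{1}{2\sigma_i}+\frac{\gamma_i}{2}\bigr)-\frac{\gamma_i}{2}$ and $-\bigl(\frac{1}{p_i}-1\bigr)m$, respectively. (4) Sum over $i=1,\dots,m$; the only term affected by the summation is $-\sum_i\langle\ait-\aiopt,\,\Xri(\wt-\wopt)\rangle = \langle\wt-\wopt,\,X^T(\aopt-\at)\rangle$, which delivers exactly~\eqref{eqn:dual-one-step-sum-obj}.

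There is no genuine obstacle: it is the same computation as for Lemma~\ref{lem:dual-expect-t} with fewer estimates discarded. The one place that requires care is the independence bookkeeping in step~(3): since $\aitp$ depends on both the random index~$j$ and on $\uitp$, whereas $\aitld$ and $\uitp$ do not depend on~$j$, one must justify pulling the factor $1/p_i$ out of the expectations of the inner product and of $\Dtld_i(\aitp)$, just as~\eqref{eqn:alpha-change-norm2} is justified in the original proof. Everything else is routine algebra.
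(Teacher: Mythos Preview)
Your proposal is correct and follows essentially the same approach as the paper: both start from the strong convexity inequality~\eqref{eqn:dual-update-sc}, retain the $f_i^*$ terms rather than absorbing them via an auxiliary strong-convexity inequality, convert $\aitld$ to $\aitp$ using the law of total expectation (including the new relation $\E_t[f_i^*(\aitp)]=p_i\E_t[f_i^*(\aitld)]+(1-p_i)f_i^*(\ait)$), and then sum over~$i$. The only cosmetic difference is the order of operations: the paper first writes the per-$i$ inequality with $f_i^*(\ait)-f_i^*(\aiopt)$ and then invokes $\E_t[\uitp]=\Xri\wt$ and the definition of $\Dtld_i$, whereas you substitute the identity $f_i^*(\ai)-f_i^*(\aiopt)=-m\Dtld_i(\ai)+\langle\ai-\aiopt,\Xri\wopt\rangle$ immediately after the strong-convexity step.
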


\begin{proof}
We start by taking conditional expectation $\E_t$ on both sides of
the inequality~\eqref{eqn:dual-update-sc}, and would like to
replace every term containing~$\aitld$ with terms that contain only
$\ait$ and $\aitp$.
In addition to the relations in~\eqref{eqn:alpha-cond-mean},
\eqref{eqn:alpha-change-norm2} and~\eqref{eqn:alpha-change-opt},
we also need
\[
  \E_t\bigl[f_i^*(\aitp)\bigr] ~=~ p_i f_i^*(\aitld)
  +(1-p_i)f_i^*(\ait).
\]
After the substitutions and rearranging terms, we have
\begin{eqnarray}
&&\left( \frac{1}{p_i}\left(\frac{1}{2\sigma_i}+\frac{\gamma_i}{2}\right)
- \frac{\gamma_i}{2}\right) \|\ait-\aiopt\|^2 
+ \left(\frac{1}{p_i}-1\right)\bigl(f_i^*(\ait)-f_i^*(\aiopt)\bigr)
\nonumber\\
&\geq&  \frac{1}{p_i}\left(\frac{1}{2\sigma_i}
+\frac{\gamma_i}{2}\right)  \E_t[\|\aitp\!\!-\aiopt\|^2] 
  + \frac{1}{2p_i\sigma_i}\E_t[\|\aitp\!\!-\ait\|^2] 
  + \frac{1}{p_i}\E_t\!
  \bigl[\bigl(f_i^*(\aitp)-f_i^*(\aiopt)\bigr)\bigr]
\nonumber \\
&& \E_t\bigl[\langle \aiopt-\ait,\, \uitp\rangle\bigr]
- \frac{1}{p_i} \E_t \!\left[\left\langle\aitp-\ait,\,
  \uitp \right\rangle \right].
\nonumber
\end{eqnarray}
Next, we use the assumption $\E_t\bigl[\uitp\bigr]=\Xri\wt$
and the definition of $\Dtld_i(\cdot)$ in~\eqref{eqn:Dtld-def} to obtain
\begin{eqnarray}
&&\left( \frac{1}{p_i}\left(\frac{1}{2\sigma_i}+\frac{\gamma_i}{2}\right)
- \frac{\gamma_i}{2}\right) \|\ait-\aiopt\|^2 
- \left(\frac{1}{p_i}-1\right) m \Dtld_i(\ait) 
\nonumber\\
&\geq&  \frac{1}{p_i}\left(\frac{1}{2\sigma_i}
+\frac{\gamma_i}{2}\right)  \E_t[\|\aitp-\aiopt\|^2] 
  + \frac{1}{2p_i\sigma_i}\E_t[\|\aitp-\ait\|^2] 
  - \frac{m}{p_i}\E_t\bigl[\Dtld_i(\aitp)\bigr]
\nonumber \\
&& + \left\langle \aiopt-\ait,\, \Xri\bigl(\wt - \wopt\bigr)\right\rangle 
 - \frac{1}{p_i} \E_t \!\left[\left\langle\aitp-\ait,\,
  \uitp - \Xri\wopt \right\rangle \right] .
\nonumber
\end{eqnarray}
Summing up the above inequality for $i=1,\ldots,m$ gives the desired 
result~\eqref{eqn:dual-one-step-sum-obj}.
\end{proof}

\begin{lemma}\label{lem:primal-expect-t-obj}
Under the same assumptions and setup in Lemma~\ref{lem:primal-expect-t}, 
we have
\begin{eqnarray}
&&\sum_{k=1}^{K}\left( \frac{1}{q_k} \left(\frac{1}{2\tau_k}
+\frac{\lambda}{2}\right) - \frac{\lambda}{2}\right) \|\wkt-\wkopt\|^2 
+ \sum_{k=1}^n\left(\frac{1}{q_k}-1\right)\Ptld_k(\wkt)
\nonumber\\
&\geq&  \sum_{k=1}^n \frac{1}{q_k}\left(\frac{1}{2\tau_k}
+\frac{\lambda}{2}\right) \E_t[\|\wktp\!-\wkopt\|^2] 
  + \sum_{k=1}^n \frac{1}{2q_k\tau_k} \E_t[\|\wktp\!-\wkt\|^2] 
  + \sum_{k=1}^n \frac{1}{q_k}\E_t\!\bigl[\Ptld_k(\wktp)\bigr]
\nonumber \\
&&  + \frac{1}{m}\left\langle  X(\wt-\wopt),\, \at - \aopt  \right\rangle 
 + \sum_{k=1}^n \frac{1}{q_k} \E_t \!\left[\left\langle\wktp-\wkt,\,
\vktp - \frac{1}{m}(\Xck)^T\aopt \right\rangle \right] .
\nonumber
\end{eqnarray}
\end{lemma}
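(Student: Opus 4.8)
The plan is to mirror the proof of Lemma~\ref{lem:dual-expect-t-obj}, carried out on a primal block $\wk$ rather than a dual block $\ai$: the regularizer $g_k$ plays the role of the conjugate $f_i^*$, the quantity $\Ptld_k$ plays the role of $\Dtld_i$, and the hypothesis $\E_t[\vktp]=\frac{1}{m}(\Xck)^T\at$ replaces $\E_t[\uitp]=\Xri\wt$. First I would write the proximal update $\wktld=\prox_{\tau_k g_k}(\wkt-\tau_k\vktp)$ in variational form, i.e., $\wktld$ minimizes $w\mapsto g_k(w)+\langle w,\vktp\rangle+\frac{1}{2\tau_k}\|w-\wkt\|^2$, a function that is $(1/\tau_k+\lambda)$-strongly convex by Assumption~\ref{asmp:convexity}. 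Evaluating its strong-convexity inequality at the point $\wkopt$ gives the primal analogue of~\eqref{eqn:dual-update-sc}:
\[
\frac{\|\wkopt-\wkt\|^2}{2\tau_k}+\langle\wkopt,\vktp\rangle+g_k(\wkopt)
~\geq~ \frac{\|\wktld-\wkt\|^2}{2\tau_k}+\langle\wktld,\vktp\rangle+g_k(\wktld)
+\Bigl(\frac{1}{\tau_k}+\lambda\Bigr)\frac{\|\wktld-\wkopt\|^2}{2}.
\]

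Next I would take the conditional expectation $\E_t$ of this inequality and eliminate every occurrence of $\wktld$ using the law-of-total-expectation identities for the block coordinate update (the primal counterparts of~\eqref{eqn:alpha-cond-mean}--\eqref{eqn:alpha-change-opt}), namely $\E_t[\wktp]=q_k\E_t[\wktld]+(1-q_k)\wkt$, $\E_t[\|\wktp-\wkt\|^2]=q_k\E_t[\|\wktld-\wkt\|^2]$, $\E_t[\|\wktp-\wkopt\|^2]=q_k\E_t[\|\wktld-\wkopt\|^2]+(1-q_k)\|\wkt-\wkopt\|^2$, together with $\E_t[g_k(\wktp)]=q_k g_k(\wktld)+(1-q_k)g_k(\wkt)$. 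Substituting these and rearranging produces an inequality whose left-hand side carries $\bigl(\frac{1}{q_k}(\frac{1}{2\tau_k}+\frac{\lambda}{2})-\frac{\lambda}{2}\bigr)\|\wkt-\wkopt\|^2$ and $(\frac{1}{q_k}-1)(g_k(\wkt)-g_k(\wkopt))$, and whose right-hand side carries the $\E_t$-terms in $\wktp$ with coefficients $\frac{1}{q_k}(\frac{1}{2\tau_k}+\frac{\lambda}{2})$ and $\frac{1}{2q_k\tau_k}$, together with the two inner products $\E_t[\langle\wkopt-\wkt,\vktp\rangle]$ and $-\frac{1}{q_k}\E_t[\langle\wktp-\wkt,\vktp\rangle]$. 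No Young's-inequality splitting is introduced at this point, in contrast with Lemma~\ref{lem:primal-expect-t}: the coefficient $\frac{1}{2q_k\tau_k}$ and the inner-product residual are kept exact.

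The last step converts the $g_k$-differences into $\Ptld_k$ and isolates the bilinear term. From the definition~\eqref{eqn:Ptld-def}, $g_k(\wk)-g_k(\wkopt)=\Ptld_k(\wk)-\frac{1}{m}\langle(\Xck)^T\aopt,\wk-\wkopt\rangle$ for every $\wk$; substituting this at $\wk=\wkt$ and $\wk=\wktp$, and using $\E_t[\vktp]=\frac{1}{m}(\Xck)^T\at=\frac{1}{m}(\Xck)^T\aopt+\frac{1}{m}(\Xck)^T(\at-\aopt)$ to rewrite $\E_t[\langle\wkopt-\wkt,\vktp\rangle]$, the $\frac{1}{m}(\Xck)^T\aopt$ contributions recombine into the residual $\frac{1}{q_k}\E_t[\langle\wktp-\wkt,\vktp-\frac{1}{m}(\Xck)^T\aopt\rangle]$, while the $\frac{1}{m}(\Xck)^T(\at-\aopt)$ contributions, once summed over $k$, produce exactly $\frac{1}{m}\langle X(\wt-\wopt),\at-\aopt\rangle$. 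Summing the per-block inequality over $k=1,\ldots,n$ then yields the claimed bound. The main obstacle is purely bookkeeping: one must track all signs carefully so that the cross term $\frac{1}{m}\langle X(\wt-\wopt),\at-\aopt\rangle$ ends up on the correct side with the correct sign, since it is meant to cancel the matching term of Lemma~\ref{lem:dual-expect-t-obj} (after that inequality is multiplied by $1/m$) when the two lemmas are combined in the proof of Theorem~\ref{thm:dscovr-svrg-obj}; the residual inner-product term is not estimated here, but only later, using the explicit form of $\vktp$ and its variance.
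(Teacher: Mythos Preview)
Your proposal is correct and follows exactly the approach the paper intends: the paper does not spell out a proof for this lemma, but it is understood to be obtained by mirroring the proof of Lemma~\ref{lem:dual-expect-t-obj} on the primal side, precisely as you outline. Your identification of the roles played by $g_k$, $\Ptld_k$, and the relation $g_k(\wk)-g_k(\wkopt)=\Ptld_k(\wk)-\frac{1}{m}\langle(\Xck)^T\aopt,\wk-\wkopt\rangle$ is exactly what is needed, and your caution about signs is well placed---note in particular that the inner product in the primal proximal objective carries a $+$ sign (versus $-$ in the dual), so the two residual inner-product terms you list should appear on the right-hand side with signs $-\E_t[\langle\wkopt-\wkt,\vktp\rangle]$ and $+\frac{1}{q_k}\E_t[\langle\wktp-\wkt,\vktp\rangle]$.
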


Based on Lemma~\ref{lem:dual-expect-t-obj} and 
Lemma~\ref{lem:primal-expect-t-obj}, we can prove the following proposition.
The proof is very similar to that of Proposition~\ref{prop:svrg-primal-dual},
thus we omit the details here.

\begin{proposition}
\label{prop:svrg-primal-dual-obj}
The $t$-th iteration within the $s$-th stage of
Algorithm~\ref{alg:dscovr-svrg} guarantees
\begin{eqnarray}
&& \sum_{k=1}^n \left(\frac{1}{q_k}-1\right)\Ptld_k(\wkt)
-\sum_{i=1}^m \left(\frac{1}{p_i}-1\right)\Dtld_i(\ait) \nonumber\\
&&\!\!\!\!\! +\sum_{i=1}^m \frac{1}{m}\left[\frac{1}{p_i}\left(
  \frac{1}{2\sigma_i} \!+\!\frac{\gamma_i}{2}\right)-\frac{\gamma_i}{2}
+\sum_{k=1}^n\frac{3\tau_k\|\Xik\|^2}{m p_i} \right] \|\ait\!\!-\aiopt\|^2 
+ \sum_{i=1}^m\sum_{k=1}^n\frac{2\tau_k\|\Xik\|^2}{m^2 p_i}
\|\abis\!\!-\aiopt\|^2  \nonumber \\
&&\!\!\!\!\! +\sum_{k=1}^n \left[\frac{1}{q_k}\left(\frac{1}{2\tau_k}
+\frac{\lambda}{2}\right)-\frac{\lambda}{2} 
+\sum_{i=1}^m\frac{3\sigma_i\|\Xik\|^2}{m q_k} \right]\|\wkt-\wkopt\|^2 
+ \sum_{i=1}^m\sum_{k=1}^n\frac{2\sigma_i\|\Xik\|^2}{m q_k}
\|\wbks-\wkopt\|^2  \nonumber \\
&\geq& \sum_{k=1}^n \frac{1}{q_k}\E_t\bigl[\Ptld_k(\wktp)\bigr]
-\sum_{i=1}^m \frac{1}{p_i}\E_t\bigl[\Dtld_i(\aitp)\bigr] \nonumber\\
&&\!\!\!\!\! +\sum_{i=1}^m \frac{1}{m p_i}\left(\frac{1}{2\sigma_i}
+\frac{\gamma_i}{2}\right)\E_t\bigl[\|\aitp-\aiopt\|^2\bigr]
+\sum_{k=1}^n \frac{1}{q_k}\left(\frac{1}{2\tau_k}
+\frac{\lambda}{2}\right)\E_t\bigl[\|\wktp-\wkopt\|^2\bigr].
\label{eqn:svrg-pd-one-step-obj}
\end{eqnarray}
\end{proposition}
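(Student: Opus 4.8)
The plan is to carry out the proof of Proposition~\ref{prop:svrg-primal-dual} almost line for line, with Lemmas~\ref{lem:dual-expect-t} and~\ref{lem:primal-expect-t} replaced by their objective-gap analogues, Lemmas~\ref{lem:dual-expect-t-obj} and~\ref{lem:primal-expect-t-obj}. First I would multiply the inequality of Lemma~\ref{lem:dual-expect-t-obj} by $1/m$ and add it to that of Lemma~\ref{lem:primal-expect-t-obj}. Exactly as before, the cross terms $\frac{1}{m}\langle X(\wt-\wopt),\,\at-\aopt\rangle$ from the two lemmas cancel, leaving on the left-hand side the quadratics in $\|\ait-\aiopt\|^2$ and $\|\wkt-\wkopt\|^2$ (now with the halved strong-convexity constants $\gamma_i/2$ and $\lambda/2$) together with $\sum_i(\frac1{p_i}-1)m\Dtld_i(\ait)$ and $\sum_k(\frac1{q_k}-1)\Ptld_k(\wkt)$, and on the right-hand side the corresponding quantities at iterate $t+1$, plus the leftover terms $\sum_i\frac{1}{2p_i\sigma_i}\E_t\|\aitp-\ait\|^2$, $\sum_k\frac{1}{2q_k\tau_k}\E_t\|\wktp-\wkt\|^2$ and the two inner-product terms $-\sum_i\frac1{mp_i}\E_t\langle\aitp-\ait,\,\uitp-\Xri\wopt\rangle$ and $\sum_k\frac1{q_k}\E_t\langle\wktp-\wkt,\,\vktp-\frac1m(\Xck)^T\aopt\rangle$.

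Next, as in Section~\ref{sec:proof-svrg}, I would substitute the specific SVRG gradients of Algorithm~\ref{alg:dscovr-svrg}, namely $\uitp=\ubis-\frac1{q_l}\Xil\wbls+\frac1{q_l}\Xil\wlt$ and the analogous $\vktp$; these are unbiased ($\E_t[\uitp]=\Xri\wt$, $\E_t[\vktp]=\frac1m(\Xck)^T\at$), and their variances obey the bounds~\eqref{eqn:uitp-variance} and~\eqref{eqn:vktp-variance}, which is where the ``historical'' terms $\frac{2\tau_k\|\Xik\|^2}{m^2p_i}\|\abis-\aiopt\|^2$ and $\frac{2\sigma_i\|\Xik\|^2}{mq_k}\|\wbks-\wkopt\|^2$ (and matching current-iterate terms) originate. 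The one genuinely new ingredient relative to Section~\ref{sec:proof-svrg} is the inner-product term: Lemma~\ref{lem:dual-expect-t-obj} produces $\uitp-\Xri\wopt$ rather than $\Xri(\wt-\wopt)$, so I would decompose $\uitp-\Xri\wopt=(\uitp-\Xri\wt)+\Xri(\wt-\wopt)$ and apply Young's inequality to the two pieces separately. The first piece, after taking expectations (conditioning on $l$ first and using $\E_t\|\aitp-\ait\|^2=p_i\E_t\|\aitld-\ait\|^2$), is bounded by~\eqref{eqn:uitp-variance}; the second piece is handled precisely as in the argument leading to~\eqref{eqn:dual-inner-prod-bound}, using the elementary inequality $\|\Xri(\wt-\wopt)\|^2\le\sum_k\frac1{q_k}\|\Xik\|^2\|\wkt-\wkopt\|^2$. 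The primal inner-product term with $\vktp-\frac1m(\Xck)^T\aopt$ is treated symmetrically, using $\|\sum_i z_i\|^2\le\sum_i\frac1{p_i}\|z_i\|^2$.

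Finally I would collect all coefficients and choose the free Young parameters so that the leftover terms $\E_t\|\aitp-\ait\|^2$ and $\E_t\|\wktp-\wkt\|^2$ have nonnegative coefficients and can be dropped; the same scaling $\propto\sigma_i$ (resp.\ $\propto\tau_k$) that gave $a_i=c_i=4\sigma_i$, $b_k=h_k=4\tau_k$ in the proof of Proposition~\ref{prop:svrg-primal-dual} works here, with the extra split merely splitting the $\frac1{2p_i\sigma_i}$ (resp.\ $\frac1{2q_k\tau_k}$) budget between two Young pairings. The result is exactly~\eqref{eqn:svrg-pd-one-step-obj}, the factor $3$ in $\frac{3\tau_k\|\Xik\|^2}{mp_i}$ again arising as $2$ (from the variance bounds) $+1$ (from the inner-product cross terms). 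The main obstacle is purely bookkeeping: checking that after the split every leftover squared-difference coefficient stays $\ge0$, and that the $\Dtld_i$ and $\Ptld_k$ terms carry through with exactly the stated weights $\frac1{p_i}$ and $\frac1{q_k}$ on the right-hand side. No idea beyond Lemmas~\ref{lem:dual-expect-t-obj}--\ref{lem:primal-expect-t-obj} and the manipulations of Section~\ref{sec:proof-svrg} is required.
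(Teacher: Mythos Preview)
Your proposal is correct and is exactly the approach the paper takes: the paper itself omits the details, stating only that the proof follows that of Proposition~\ref{prop:svrg-primal-dual} with Lemmas~\ref{lem:dual-expect-t} and~\ref{lem:primal-expect-t} replaced by Lemmas~\ref{lem:dual-expect-t-obj} and~\ref{lem:primal-expect-t-obj}. Your identification of the one new wrinkle---splitting $\uitp-\Xri\wopt=(\uitp-\Xri\wt)+\Xri(\wt-\wopt)$ so that the $\frac{1}{2p_i\sigma_i}$ budget absorbs two Young pairings with $a_i=c_i=4\sigma_i$---and the resulting $2+1$ accounting for the coefficient~$3$ are both on point.
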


Now we proceed to prove Theorem~\ref{thm:dscovr-svrg-obj}.
Let $\theta\in(0,1)$ be a parameter to be determined later, 
and let~$\Gamma$ and~$\eta$ be two constants such that
\begin{eqnarray}
\Gamma &\geq& \max_{i,k}\left\{ 
\frac{1}{p_i}\left(1+\frac{6\Lambda}{\theta q_k \lambda\gamma_i}\right),\;
\frac{1}{q_k}\left(1+\frac{6n\Lambda}{\theta p_i m\lambda\gamma_i}\right)
\right\},  \label{eqn:Gamma-def-theta-obj} \\
\eta &=& 1-\frac{1-\theta}{\Gamma}. \label{eqn:eta-def-theta-obj}
\end{eqnarray}
It is easy to check that $\Gamma>1$ and $\eta\in(0,1)$.
The choices of $\sigma_i$ and $\tau_k$ in~\eqref{eqn:sigma-i-obj} 
and~\eqref{eqn:tau-k-obj} satisfy
\begin{eqnarray}
\frac{1}{p_i}\left(\frac{1}{2}+\frac{1}{2\sigma_i \gamma_i}\right) 
&=& \frac{\Gamma}{2}, \qquad i=1,\dots,m,  
\label{eqn:sigma-Gamma-obj} \\
\frac{1}{q_k}\left(\frac{1}{2}+\frac{1}{2\tau_k \lambda}\right) 
&=& \frac{\Gamma}{2}, \qquad k=1,\dots,n.
\label{eqn:tau-Gamma-obj} 
\end{eqnarray}
Comparing them with the definition of $\Gamma$ 
in~\eqref{eqn:Gamma-def-theta-obj}, 
and using the assumption $\Lambda\geq\|\Xik\|_F^2\geq\|\Xik\|^2$, we get
\[
\frac{6\|\Xik\|^2}{\theta q_k \lambda\gamma_i} \leq
\frac{6\Lambda}{\theta q_k \lambda\gamma_i} \leq\frac{1}{\sigma_i \gamma_i}
\qquad\textrm{and}\qquad
\frac{6n\|\Xik\|^2}{\theta p_i m\lambda\gamma_i} \leq 
\frac{6n\Lambda}{\theta p_i m\lambda\gamma_i} \leq\frac{1}{\tau_k \lambda},
\]
which implies
\[
  \frac{3\sigma_i\|\Xik\|^2}{q_k} \leq \theta\frac{\lambda}{2} 
  \qquad\textrm{and}\qquad
  \frac{3n\tau_k\|\Xik\|^2}{m p_i} \leq \theta\frac{\gamma_i}{2},
\]
for all $i=1,\ldots,m$ and $k=1,\ldots,n$. Therefore, we have
\begin{eqnarray}
&&  \sum_{k=1}^n\frac{3\tau_k\|\Xik\|^2}{m p_i}
= \frac{1}{n} \sum_{k=1}^n\frac{3n\tau_k\|\Xik\|^2}{m p_i}
\leq \theta\frac{\gamma_i}{2}, 
  \qquad i=1,\ldots,m, \label{eqn:sum-tau-k-obj}\\
&&  \sum_{i=1}^m\frac{3\sigma_i\|\Xik\|^2}{m q_k}
= \frac{1}{m} \sum_{i=1}^m\frac{3\sigma_i\|\Xik\|^2}{q_k}
\leq \theta\frac{\lambda}{2}, 
  \qquad k=1,\ldots,n. \label{eqn:sum-sigma-i-obj}
\end{eqnarray}

Now we consider the inequality~\eqref{eqn:svrg-pd-one-step-obj}, and examine
the ratio between the coefficients of $\|\ait-\aiopt\|^2$ and
$\E_t[\|\aitp-\aiopt\|^2]$.
Using~\eqref{eqn:sum-tau-k-obj} and~\eqref{eqn:sigma-Gamma-obj}, we have
\begin{equation}\label{eqn:dual-coeff-ratio-obj}
  \frac{\frac{1}{p_i}\left(\frac{1}{2\sigma_i}+\frac{\gamma_i}{2}
  \right)-\frac{\gamma_i}{2}+\sum_{k=1}^n\frac{3\tau_k\|\Xik\|^2}{m p_i}}{
    \frac{1}{p_i}\left(\frac{1}{2\sigma_i}+\frac{\gamma_i}{2}\right)}
\leq 1 - 
\frac{(1-\theta)\frac{\gamma_i}{2}}{
  \frac{1}{p_i}\left(\frac{1}{2\sigma_i}+\frac{\gamma_i}{2}\right)}
=1-\frac{1-\theta}{\Gamma} = \eta.
\end{equation}
Similarly, the ratio between the coefficients of $\|\wkt-\wkopt\|^2$ and
$\E_t[\|\wktp-\wkopt\|^2]$ can be bounded 
using~\eqref{eqn:sum-sigma-i-obj} and~\eqref{eqn:tau-Gamma-obj}:
\begin{equation}\label{eqn:primal-coeff-ratio-obj}
  \frac{\frac{1}{q_k}\left(\frac{1}{2\tau_k}+\frac{\lambda}{2}
  \right)-\frac{\lambda}{2}+\sum_{i=1}^m\frac{3\sigma_i\|\Xik\|^2}{m q_k}}{
    \frac{1}{q_k}\left(\frac{1}{2\tau_k}+\frac{\lambda}{2}\right)}
\leq 1 - 
\frac{(1-\theta)\frac{\lambda}{2}}{
  \frac{1}{q_k}\left(\frac{1}{2\tau_k}+\frac{\lambda}{2}\right)}
=1-\frac{1-\theta}{\Gamma} = \eta.
\end{equation}
In addition, the ratio between the coefficients of
$\|\abis-\aiopt\|^2$ and $\E_t[\|\aitp-\aiopt\|^2]$ and that of
$\|\wbks-\wkopt\|^2$ and $\E_t[\|\wktp-\wkopt\|^2]$ can be bounded as
\begin{eqnarray}
\frac{\sum_{k=1}^n\frac{2\tau_k\|\Xik\|^2}{m p_i}}{
  \frac{1}{p_i}\left(\frac{1}{2\sigma_i}+\frac{\gamma_i}{2}\right)}
\leq
\frac{(2/3)\theta\frac{\gamma_i}{2}}{\frac{1}{p_i}
\left(\frac{1}{2\sigma_i}+\frac{\gamma_i}{2}\right)}
=\frac{(2/3)\theta}{\Gamma} = \frac{2\theta(1-\eta)}{3(1-\theta)}, 
\label{eqn:dual-stage-ratio-obj} \\
\frac{\sum_{i=1}^m\frac{2\sigma_i\|\Xik\|^2}{m q_k}}{
  \frac{1}{q_k}\left(\frac{1}{2\tau_k}+\frac{\lambda}{2}\right)}
  \leq \frac{(2/3)\theta\frac{\lambda}{2}}{
    \frac{1}{q_k}\left(\frac{1}{2\tau_k}+\frac{\lambda}{2}\right)}
=\frac{(2/3)\theta}{\Gamma} = \frac{2\theta(1-\eta)}{3(1-\theta)}. 
\label{eqn:primal-stage-ratio-obj}
\end{eqnarray}
Also, the ratios between the coefficients 
of $\Ptld_k(\wkt)$ and $\E_t\bigl[\Ptld_k(\wktp)\bigr]$ is $1-q_k$,
and that of $\Dtld_k(\ait)$ and $\E_t\bigl[\Dtld_i(\aitp)\bigr]$ is $1-p_i$.
From the definition of~$\Gamma$ and~$\eta$ in~\eqref{eqn:Gamma-def-theta-obj}
and~\eqref{eqn:eta-def-theta-obj}, we have
\begin{equation}\label{eqn:p-q-eta-obj}
  1-p_i \leq \eta \quad\textrm{for}\quad i=1,\ldots,m,
  \quad\textrm{and}\quad
  1-q_k \leq \eta \quad\textrm{for}\quad k=1,\ldots,n.
\end{equation}
Using the relations in~\eqref{eqn:sigma-Gamma-obj} 
and~\eqref{eqn:tau-Gamma-obj} and the inequalities
\eqref{eqn:dual-coeff-ratio-obj}, \eqref{eqn:primal-coeff-ratio-obj},
\eqref{eqn:dual-stage-ratio-obj}, \eqref{eqn:primal-stage-ratio-obj}
and~\eqref{eqn:p-q-eta-obj},
we conclude that the inequality~\eqref{eqn:svrg-pd-one-step-obj} implies
\begin{eqnarray*}
&& \eta \left(\sum_{k=1}^n \frac{1}{q_k}\Ptld_k(\wkt)
-\sum_{i=1}^m \frac{1}{p_i}\Dtld_i(\ait) \right) 
 + \eta \left(\sum_{i=1}^m \frac{\Gamma\gamma_i}{2m}\|\ait-\aiopt\|^2
   + \sum_{k=1}^n \frac{\Gamma\lambda}{2}\|\wkt-\wkopt\|^2 \right) \\
&&  + \frac{2\theta(1-\eta)}{3(1-\theta)} \left(
  \sum_{i=1}^m \frac{\Gamma\gamma_i}{2m}\|\abis-\aiopt\|^2 
  \sum_{k=1}^n \frac{\Gamma\lambda}{2}\|\wbks-\wkopt\|^2 \right)\\
&\geq& \sum_{k=1}^n \frac{1}{q_k}\E_t\bigl[\Ptld_k(\wktp)\bigr]
-\sum_{i=1}^m \frac{1}{p_i}\E_t\bigl[\Dtld_i(\aitp)\bigr] \nonumber\\
&&+\sum_{i=1}^m \frac{\Gamma\gamma_i}{2m}\E_t[\|\aitp-\aiopt\|^2]
+ \sum_{k=1}^n \frac{\Gamma\lambda}{2}\E_t[\|\wktp-\wkopt\|^2],
\end{eqnarray*}
which is equivalent to
\begin{eqnarray}
&& \eta \left(\sum_{k=1}^n \frac{1}{q_k}\Ptld_k(\wkt)
-\sum_{i=1}^m \frac{1}{p_i}\Dtld_i(\ait) 
+ \frac{\Gamma\lambda}{2}\|\wt-\wopt\|^2 
+ \frac{1}{m}\sum_{i=1}^m\frac{\Gamma\gamma_i}{2}\|\ait-\aiopt\|^2\right)
\nonumber\\
&&  + \frac{2\theta(1-\eta)}{3(1-\theta)} 
\left(\frac{\Gamma\lambda}{2}\|\wbs-\wopt\|^2
  +\frac{1}{m}\sum_{i=1}^m\frac{\Gamma\gamma_i}{2}\|\abis-\aiopt\|^2\right) 
\label{eqn:svrg-pd-one-step-simple-obj} \\
&\geq& 
  \E_t\Biggl[\sum_{k=1}^n \frac{1}{q_k}\Ptld_k(\wktp)
-\sum_{i=1}^m \frac{1}{p_i}\Dtld_i(\aitp) 
+\frac{\Gamma\lambda}{2}\|\wtp\!-\wopt\|^2
+\frac{1}{m}\sum_{i=1}^m\frac{\Gamma\gamma_i}{2}\|\aitp\!-\aiopt\|^2\Biggr].
\nonumber 
\end{eqnarray}

To simplify further derivation, we define
\begin{eqnarray*}
\Dt &=& \sum_{k=1}^n \frac{1}{q_k}\Ptld_k(\wkt)
-\sum_{i=1}^m \frac{1}{p_i}\Dtld_i(\ait) 
+ \frac{\Gamma\lambda}{2}\|\wt-\wopt\|^2 
+ \frac{1}{m}\sum_{i=1}^m\frac{\Gamma\gamma_i}{2}\|\ait-\aiopt\|^2, \\
\Dbs &=& \sum_{k=1}^n \frac{1}{q_k}\Ptld_k(\wbks)
-\sum_{i=1}^m \frac{1}{p_i}\Dtld_i(\abis) 
+ \frac{\Gamma\lambda}{2}\|\wbs-\wopt\|^2 
+ \frac{1}{m}\sum_{i=1}^m\frac{\Gamma\gamma_i}{2}\|\abis-\aiopt\|^2.
\end{eqnarray*}
Using the facts that $\Ptld_k(\wbks)\geq 0$ and $-\Dtld_i(\abis)\geq 0$,
the inequality~\eqref{eqn:svrg-pd-one-step-simple-obj} implies
\[
\frac{2\theta(1-\eta)}{3(1-\theta)}\Dbs + \eta\E\bigl[\Dt\bigr]
~\geq~ \E\bigl[\Dtp\bigr],
\]
where the expectation is taken with respect to all randomness in the $s$-th
stage, that is, the random variables 
$\{(j^{(0)},l^{(0)}), (j^{(1)},l^{(1)}),\ldots, (j^{(M-1)},l^{(M-1)})\}$.
Next we choose $\theta=1/3$ and follow the same arguments as in the proof 
for Theorem~\ref{thm:dscovr-svrg} to obtain
$\E\bigl[\DM\bigr]~\leq~\frac{2}{3}\Dini$,
provided $M\geq\log(3)\Gamma$. This further implies
\begin{equation}\label{eqn:stage-converge-obj}
\E\bigl[\Dbs\bigr]~\leq~\left(\frac{2}{3}\right)^s \Dbini.
\end{equation}

From the definition of $\Gamma$ in~\eqref{eqn:Gamma-def-theta-obj}, we have
$\frac{1}{q_k}<\Gamma$ for $k=1,\ldots,n$ and
$\frac{1}{p_i}<\Gamma$ for $i=1,\ldots,m$. Therefore,
\begin{eqnarray}
\Dbini
&\leq& \Gamma\left(\sum_{k=1}^n \Ptld_k(\wbkini) -\sum_{i=1}^m \Dtld_i(\abiini) 
+ \frac{\lambda}{2}\|\wbini-\wopt\|^2 
+ \frac{1}{m}\sum_{i=1}^m\frac{\gamma_i}{2}\|\abiini-\aiopt\|^2\right) 
\nonumber \\
&\leq& 2\Gamma \left(\sum_{k=1}^n\Ptld_k(\wbkini)
-\sum_{i=1}^m\Dtld_i(\abiini)\right) \nonumber \\
&\leq& 2\Gamma \left(P(\wbini)-D(\abini)\right) ,
\label{eqn:Delta-0-leq-gap}
\end{eqnarray}
where the second inequality used~\eqref{eqn:Ptld-Dtld-quadratic}
and the last inequality used~\eqref{eqn:Ptld-Dtld-lower-bound}.
On the other hand, we can also lower bound $\Dbs$ using $P(\wbs)-D(\abs)$.
To this end, we notice that with $\theta=1/3$,
\begin{eqnarray*}
\Gamma &\geq& \max_{i,k}\left\{ 
\frac{1}{p_i}\left(1+\frac{18\Lambda}{q_k \lambda\gamma_i}\right),\;
\frac{1}{q_k}\left(1+\frac{18n\Lambda}{p_i m\lambda\gamma_i}\right)\right\}
~\geq~ \max_{i,k}\left\{ 
\frac{18\Lambda}{p_i q_k \lambda\gamma_i},\;
\frac{18n\Lambda}{p_i q_k m\lambda\gamma_i}\right\}.
\end{eqnarray*}
Noticing that $\max_k\{1/q_k\}\geq n$ and $n\Lambda\geq\|\Xri\|_F^2$ for all
$i=1,\ldots,m$, we have 
\[
\Gamma \geq \max_{i,k}\left\{ \frac{18\Lambda}{q_k\lambda\gamma_i} \right\}
\geq \max_i\left\{ \frac{18n\Lambda}{\lambda\gamma_i} \right\}
\geq\frac{18}{m\lambda}\sum_{i=1}^m \frac{n\Lambda}{\gamma_i}
\geq\frac{18}{m\lambda}\sum_{i=1}^m \frac{\|\Xri\|_F^2}{\gamma_i}
\geq\frac{18}{m\lambda}\sum_{i=1}^m \frac{\|\Xri\|^2}{\gamma_i}.
\]
Moreover, since 
$\Gamma\geq\max_k\left\{\frac{18\Lambda}{p_i q_k \lambda\gamma_i}\right\}
\geq\frac{18n\Lambda}{p_i \lambda\gamma_i}$ for all~$i$
and $mn\Lambda\geq\|X\|_F^2$, we have
\begin{eqnarray*}
\frac{1}{m}\sum_{i=1}^m \Gamma\gamma_i\|\abis-\aiopt\|^2 
&\geq&
\frac{1}{m}\sum_{i=1}^m \frac{18n\Lambda}{p_i\lambda\gamma_i}
\gamma_i\|\abis-\aiopt\|^2
=\frac{18mn\Lambda}{m^2\lambda} \sum_{i=1}^m \frac{\|\abis-\aiopt\|^2}{p_i}\\
&\geq&
\frac{18\|X\|_F^2}{m^2\lambda}\biggl(\sum_{i=1}^m\|\abis-\aiopt\|\biggr)^2\\
&\geq&
\frac{18\|X\|^2}{m^2\lambda}\sum_{i=1}^m\|\abis-\aiopt\|^2
=\frac{18\|X\|^2}{m^2\lambda}\|\abs-\aopt\|^2.
\end{eqnarray*}
Therefore, from the definition of $\Dbs$,
\begin{eqnarray}
\Dbs &=& \sum_{k=1}^n \frac{1}{q_k}\Ptld_k(\wbks)
-\sum_{i=1}^m \frac{1}{p_i}\Dtld_i(\abis) 
+ \frac{\Gamma\lambda}{2}\|\wbs-\wopt\|^2 
+ \frac{1}{m}\sum_{i=1}^m\frac{\Gamma\gamma_i}{2}\|\abis-\aiopt\|^2 \nonumber\\
&\geq& \sum_{k=1}^n \Ptld_k(\wbks)-\sum_{i=1}^m \Dtld_i(\abis)
+\biggl(\frac{18}{m}\sum_{i=1}^m\frac{\|\Xri\|^2}{\gamma_i}\biggr)
\|\wbs-\wopt\|^2
+\frac{18\|X\|^2}{m^2\lambda}\|\abs-\aopt\|^2  \nonumber \\ 
&=& \Lagr(\wbs,\aopt) - \Lagr(\wopt,\abs)
+\biggl(\frac{18}{m}\sum_{i=1}^m\frac{\|\Xri\|^2}{\gamma_i}\biggr)
\|\wbs-\wopt\|^2
+\frac{18\|X\|^2}{m^2\lambda}\|\abs-\aopt\|^2  \nonumber \\
&\geq& P(\wbs)-D(\abs),
\label{eqn:Delta-s-geq-gap}
\end{eqnarray}
where the last inequality is due to Lemma~\ref{eqn:pd-gap-upper-bound}.
Combining~\eqref{eqn:stage-converge-obj}, \eqref{eqn:Delta-0-leq-gap}
and~\eqref{eqn:Delta-s-geq-gap} gives the desired result:
\[
  \E\left[ P(\wbs)-D(\abs) \right] ~\leq~ 
  \left(\frac{2}{3}\right)^s 
  2\Gamma \left(P(\wbini)-D(\abini)\right) .
\]

\section{Proof of Theorem~\ref{thm:dscovr-saga}}
\label{sec:proof-saga}

To facilitate the analysis of DSCOVR-SAGA in Algorithm~\ref{alg:dscovr-saga},
we define two sequences of matrices recursively.
The first is $\{\Wt\}_{t\geq 0}$, where each $\Wt\in\R^{m\times d}$.
They are partitioned into $m\times n$ blocks, and we denote each block as
$\Wikt\in\R^{1\times d_k}$.
The recursive updates for $\Wt$ are as follows:
\begin{eqnarray}
\Wini &=& \ones_m \otimes \bigl(\wini\bigr)^T, \nonumber \\
\Wiktp &=& \left\{ \begin{array}{ll}
	\bigl(\wlt\bigr)^T & \textrm{if}~i=j~\textrm{and}~k=l,\\[0.3em]
\Wikt & \textrm{otherwise}, \end{array} \right.
\qquad t=0,1,2,\ldots,
\label{eqn:Wt-recursion}
\end{eqnarray}
where $\ones_m$ denotes the vector of all ones in $\R^m$.
and $\otimes$ denotes the Kronecker product of two matrices.
The second sequence is $\{\At\}_{t\geq 0}$, where each $\At\in\R^{N\times n}$.
They are partitioned into $m\times n$ blocks, and we denote each block as
$\Aikt\in\R^{N_i\times 1}$.
The recursive updates for $\At$ are as follows:
\begin{eqnarray}
\Aini &=& \aini \otimes \ones_n^T, \nonumber \\
\Aiktp &=& \left\{ \begin{array}{ll}
	\ajt & \textrm{if}~i=j~\textrm{and}~k=l,\\[0.3em]
\Aikt & \textrm{otherwise}, \end{array} \right.
\qquad t=0,1,2,\ldots.
\label{eqn:At-recursion}
\end{eqnarray}
The matrices $\Wt$ and $\At$ consist of most recent values of the primal and
dual block coordinates, updated at different times, up to time~$t$.

Notice that in Algorithm~\ref{alg:dscovr-saga}, 
the matrices $\Ut\in\R^{N\times n}$ follow the same partitioning as the
matrices $\At$, and the matrices $\Vt\in\R^{m\times d}$ follow the same
partitioning as the matrices $\Wt$.
According to the updates of $\Ut$, $\Vt$, $\ubt$ and $\vbt$
in Algorithm~\ref{alg:dscovr-saga}, we have for each $t\geq 0$,
\begin{eqnarray}
\Uikt &=& \Xik\bigl(\Wikt\bigr)^T, 
	\quad i=1,\ldots,m,\quad k=1,\ldots,n, \nonumber \\
\Vikt &=& \frac{1}{m}\bigl(\Aikt\bigr)^T \Xik, 
	\quad i=1,\ldots,m,\quad k=1,\ldots,n. \nonumber 
\end{eqnarray}

\begin{proposition}
\label{prop:saga-primal-dual}
Suppose Assumption~\ref{asmp:convexity} holds. 
The $t$-th iteration of~Algorithm~\ref{alg:dscovr-saga} guarantees
\begin{eqnarray}
&& \sum_{i=1}^m \frac{1}{m}\left[\frac{1}{p_i}\left(\frac{1}{2\sigma_i}
+\gamma_i\right)-\gamma_i+\sum_{k=1}^n\frac{3\tau_k\|\Xik\|^2}{m p_i} \right]
\|\ait-\aiopt\|^2  
+ \sum_{i=1}^m\sum_{k=1}^n\frac{2\tau_k\|\Xik\|^2}{m^2 p_i}
\|\Aikt-\aiopt\|^2  \nonumber\\
&&\!\!\!\!\!\! +\sum_{k=1}^n \left[\frac{1}{q_k}\left(\frac{1}{2\tau_k}
+\lambda\right)-\lambda+\sum_{i=1}^m\frac{3\sigma_i\|\Xik\|^2}{m q_k} 
  \right] \|\wkt-\wkopt\|^2 
+ \sum_{i=1}^m\sum_{k=1}^n\frac{2\sigma_i\|\Xik\|^2}{m q_k}
\|\bigl(\Wikt\bigr)^T-\wkopt\|^2  \nonumber \\
&\geq& \sum_{i=1}^m \frac{1}{m p_i}\left(\frac{1}{2\sigma_i}+\gamma_i\right)
\E_t\bigl[\|\aitp-\aiopt\|^2\bigr]
+\sum_{k=1}^n \frac{1}{q_k}\left(\frac{1}{2\tau_k}+\lambda\right)
\E_t\bigl[\|\wktp-\wkopt\|^2\bigr]
\label{eqn:saga-one-step-prop}
\end{eqnarray}
\end{proposition}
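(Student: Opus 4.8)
The plan is to mirror the proof of Proposition~\ref{prop:svrg-primal-dual} step for step, because Lemma~\ref{lem:dual-expect-t} and Lemma~\ref{lem:primal-expect-t} were stated in a form general enough to cover \emph{any} unbiased stochastic gradients, not only the SVRG ones. The sole substantive change is in the variance bounds, where the stage reference points $\wbs$ and $\abs$ get replaced by the per-block historical iterates stored in $\Wt$ and $\At$.

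First I would record the two facts already established in the excerpt: $\E_l[\ujtp]=\Xrj\wt$ and $\E_j[\vltp]=\frac{1}{m}(\Xcl)^T\at$ (see~\eqref{eqn:ujtp-expect} and~\eqref{eqn:vltp-expect}), which rely on the invariants $\ubit=\sum_k\Uikt$ and $\vbkt=\sum_i(\Vikt)^T$ from~\eqref{eqn:ubit=sumUikt} and~\eqref{eqn:vbkt=sumVikt}, together with $\Uikt=\Xik(\Wikt)^T$ and $\Vikt=\frac{1}{m}(\Aikt)^T\Xik$. Once these are in hand, the hypotheses of Lemmas~\ref{lem:dual-expect-t} and~\ref{lem:primal-expect-t} are satisfied with $\uitp,\vktp$ the SAGA estimators, so I multiply the dual inequality~\eqref{eqn:dual-one-step-sum} by $\frac{1}{m}$, add it to the primal inequality~\eqref{eqn:primal-one-step-sum}, and the coupling term $\frac{1}{m}\langle X(\wt-\wopt),\,\at-\aopt\rangle$ cancels exactly, just as in~\eqref{eqn:primal-dual-sum}.

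Next I would bound the two variance terms. Using the relation between variance and the second moment, together with $\ubit=\sum_k\Uikt$ and $\Xri\wt=\sum_k\Xik\wkt$,
\[
\E_t\bigl[\|\uitp-\Xri\wt\|^2\bigr]
= \sum_{k=1}^n\frac{1}{q_k}\bigl\|\Uikt-\Xik\wkt\bigr\|^2-\|\ubit-\Xri\wt\|^2
\le \sum_{k=1}^n\frac{2\|\Xik\|^2}{q_k}\Bigl(\|(\Wikt)^T-\wkopt\|^2+\|\wkt-\wkopt\|^2\Bigr),
\]
and analogously, with $\vbkt=\sum_i(\Vikt)^T$ and $\frac{1}{m}(\Xck)^T\at=\sum_i\frac{1}{m}(\Xik)^T\ait$,
\[
\E_t\Bigl[\bigl\|\vktp-\tfrac{1}{m}(\Xck)^T\at\bigr\|^2\Bigr]
\le \sum_{i=1}^m\frac{2\|\Xik\|^2}{p_i m^2}\Bigl(\|\Aikt-\aiopt\|^2+\|\ait-\aiopt\|^2\Bigr).
\]
These are precisely the SAGA analogues of~\eqref{eqn:uitp-variance} and~\eqref{eqn:vktp-variance}, with $(\Wikt)^T$ and $\Aikt$ in place of $\wbks$ and $\abis$. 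The two inner-product remainder terms $\E_t[-\langle\aitp-\ait,\,\Xri(\wt-\wopt)\rangle]$ and $\E_t[\langle\wktp-\wkt,\,\frac{1}{m}(\Xck)^T(\at-\aopt)\rangle]$ are bounded verbatim as in~\eqref{eqn:dual-inner-prod-bound} and~\eqref{eqn:primal-inner-prod-bound}, via Young's inequality with free constants $c_i,h_k>0$ and the Cauchy--Schwarz estimate $\|\sum_k z_k\|^2\le\sum_k\frac{1}{q_k}\|z_k\|^2$ applied to $\Xri(\wt-\wopt)=\sum_k\Xik(\wkt-\wkopt)$ and its dual counterpart.

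Finally, I would substitute all four bounds into the combined inequality, collect the coefficients of $\|\ait-\aiopt\|^2$, $\|\wkt-\wkopt\|^2$, $\|\Aikt-\aiopt\|^2$, $\|(\Wikt)^T-\wkopt\|^2$, $\E_t[\|\aitp-\ait\|^2]$ and $\E_t[\|\wktp-\wkt\|^2]$, and then set $a_i=c_i=4\sigma_i$ and $b_k=h_k=4\tau_k$; this makes the coefficients of the two $\E_t[\|\cdot-\cdot\|^2]$ difference terms nonnegative and leaves exactly~\eqref{eqn:saga-one-step-prop}. The point needing the most care — more bookkeeping than genuine difficulty — is the variance computation: everything hinges on the invariants $\ubit=\sum_k\Uikt$ and $\vbkt=\sum_i(\Vikt)^T$ being preserved by Steps~7 and~8 of Algorithm~\ref{alg:dscovr-saga}, so that the SAGA estimators are unbiased and their variance telescopes to a single replaced block, yielding the same structural form as in the SVRG case.
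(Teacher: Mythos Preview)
Your proposal is correct and follows essentially the same approach as the paper's proof: start from the combined inequality~\eqref{eqn:primal-dual-sum}, replace the SVRG variance bounds~\eqref{eqn:uitp-variance} and~\eqref{eqn:vktp-variance} by their SAGA analogues (with $(\Wikt)^T$ and $\Aikt$ in place of $\wbks$ and $\abis$), and then reuse the inner-product bounds~\eqref{eqn:dual-inner-prod-bound}, \eqref{eqn:primal-inner-prod-bound} and the choice $a_i=c_i=4\sigma_i$, $b_k=h_k=4\tau_k$ verbatim. The paper's own proof is in fact even terser than yours, simply stating the new variance bounds and declaring that the rest is identical to Proposition~\ref{prop:svrg-primal-dual}.
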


\begin{proof}
The main differences between Algorithm~\ref{alg:dscovr-svrg} and
Algorithm~\ref{alg:dscovr-saga} are the definitions of $\ujtp$ and $\vltp$.
We start with the inequality~\eqref{eqn:primal-dual-sum} and revisit the 
bounds for the following two quantities:
\[
  \E_t\bigl[\|\uitp-\Xri \wt\|^2\bigr] 
  \qquad\textrm{and}\qquad
  \E_t\!\left[\Bigl\|\vktp-\frac{1}{m}(\Xck)^T\at\Bigr\|^2\right].
\]
For Algorithm~\ref{alg:dscovr-saga}, we have
\begin{eqnarray*}
\uitp &=& \ubit-\frac{1}{q_l}\Uilt+\frac{1}{q_l}\Xil\wlt, 
\qquad i=1,\ldots,m,\\
\vktp &=& \vbkt-\frac{1}{p_j}(\Vjkt)^T +\frac{1}{p_j}\frac{1}{m}(\Xjk)^T\ajt,
\qquad k=1,\ldots,n.
\end{eqnarray*}
We can apply the reasoning in~\eqref{eqn:ujtp-expect} 
and~\eqref{eqn:vltp-expect} to every block coordinate and obtain
\begin{eqnarray*}
  \E_t\bigl[\uitp\bigr] &=& \Xri\wt, \qquad i=1,\ldots,m, \\
  \E_t\bigl[\vktp\bigr] &=& \frac{1}{m}(\Xck)^T\at , \qquad k=1,\ldots,n.
\end{eqnarray*}
Therefore they satisfy the assumptions in
Lemma~\ref{lem:dual-expect-t} and Lemma~\ref{lem:primal-expect-t}, respectively.
Moreover, following similar arguments as in~\eqref{eqn:uitp-variance} 
and~\eqref{eqn:vktp-variance}, we have
\begin{eqnarray*}
\E_t\bigl[\|\uitp-\Xri \wt\|^2\bigr] 
&\leq& \sum_{k=1}^n\frac{2\|\Xik\|^2}{q_k}\left(
\Bigl\|\bigl(\Wikt\bigr)^T-\wkopt\Bigr\|^2 + \|\wkt-\wkopt\|^2\right), \\
\E_t\!\left[\Bigl\|\vktp-\frac{1}{m}(\Xck)^T\at\Bigr\|^2\right]
&\leq& \sum_{i=1}^m\frac{2\|\Xik\|^2}{m^2 p_i}\left(
\Bigl\|\bigl(\Aikt\bigr)^T-\aiopt\Bigr\|^2 + \|\ait-\aiopt\|^2\right). 
\end{eqnarray*}
The rest of the proof are the same as in the proof of
Proposition~\ref{prop:svrg-primal-dual}.
\end{proof}

Now we are ready to prove Theorem~\ref{thm:dscovr-saga}.
By the definition of $\Wt$ in~\eqref{eqn:Wt-recursion} and $\At$ 
in~\eqref{eqn:At-recursion}, we have
\begin{eqnarray}
\E_t\left[\bigl\|\bigl(\Wiktp)^T-\wkopt\bigr\|^2\right]
&=& p_i q_k \bigl\|\wkt-\wkopt\bigr\|^2 
	+ (1-p_i q_k)\bigl\|\bigl(\Wikt\bigr)^T-\wkopt\bigr\|^2, 
\label{eqn:expect-Wiktp}\\
\E_t\left[\bigl\|\Aiktp-\aiopt\bigr\|^2\right]
&=& p_i q_k \bigl\|\ait-\aiopt\bigr\|^2 
	+ (1-p_i q_k)\bigl\|\Aikt-\aiopt\bigr\|^2 .
\label{eqn:expect-Aiktp}
\end{eqnarray}
For all $i=1,\ldots,m$ and $k=1,\ldots,n$, let
\begin{equation}\label{eqn:xi-zeta}
\xi_{ik} = \frac{3\sigma_i\|\Xik\|^2}{m p_i q_k^2}
\qquad\textrm{and}\qquad
\zeta_{ik} = \frac{3\tau_k\|\Xik\|^2}{m^2 p_i^2 q_k}.
\end{equation}
We multiply~\eqref{eqn:expect-Wiktp} by $\xi_{ik}$ and~\eqref{eqn:expect-Aiktp}
by $\zeta_{ik}$ and add them to~\eqref{eqn:saga-one-step-prop} to obtain
\begin{eqnarray}
&& \sum_{i=1}^m \frac{1}{m}\left[\frac{1}{p_i}\left(\frac{1}{2\sigma_i}
+\gamma_i\right)-\gamma_i+\sum_{k=1}^n\frac{6\tau_k\|\Xik\|^2}{m p_i} \right]
\|\ait-\aiopt\|^2 \nonumber\\  
&&\!\!\! +\sum_{k=1}^n \left[\frac{1}{q_k}\left(\frac{1}{2\tau_k}+\lambda
\right)-\lambda+\sum_{i=1}^m\frac{6\sigma_i\|\Xik\|^2}{m q_k} \right]
\|\wkt-\wkopt\|^2 \nonumber \\
&& + \sum_{i=1}^m\sum_{k=1}^n \left(1-\frac{1}{3}p_i q_k\right) 
	\zeta_{ik} \bigl\|\Aikt-\aiopt\bigr\|^2  
   + \sum_{i=1}^m\sum_{k=1}^n\left(1-\frac{1}{3}p_i q_k\right)
	\xi_{ik}\bigl\|\bigl(\Wikt\bigr)^T-\wkopt\bigr\|^2   \nonumber\\
&\geq& \sum_{i=1}^m \frac{1}{m p_i}\left(\frac{1}{2\sigma_i}+\gamma_i\right)
\E_t\bigl[\|\aitp-\aiopt\|^2\bigr]
+\sum_{k=1}^n \frac{1}{q_k}\left(\frac{1}{2\tau_k}+\lambda\right)
\E_t\bigl[\|\wktp-\wkopt\|^2\bigr] \nonumber \\
&& + \sum_{i=1}^m\sum_{k=1}^n 
\zeta_{ik}\E_t\bigl[\bigl\|\Aiktp-\aiopt\bigr\|^2\bigr]
+ \sum_{i=1}^m\sum_{k=1}^n 
\xi_{ik}\E_t\bigl[\bigl\|\bigl(\Wiktp\bigr)^T-\wkopt\bigr\|^2\bigr].
\label{eqn:saga-pd-one-step}
\end{eqnarray}
Let $\theta\in(0,1)$ be a parameter to be determined later, 
and $\Gamma$ be a constant such that
\begin{eqnarray}
\Gamma &\geq& \max_{i,k}\left\{ 
\frac{1}{p_i}\left(1+\frac{3\|\Xik\|^2}{2\theta q_k \lambda\gamma_i}\right),\;
\frac{1}{q_k}\left(1+\frac{3n\|\Xik\|^2}{2\theta p_i m\lambda\gamma_i}\right),\;
\frac{1}{p_i q_k} \right\}.  \label{eqn:Gamma-def-theta-saga} 
\end{eqnarray}
The choices of $\sigma_i$ in~\eqref{eqn:sigma-i-saga} and
$\tau_k$ in~\eqref{eqn:tau-k-saga} satisfy
\begin{equation}\label{eqn:sigma-tau-Gamma-saga}
\frac{1}{p_i}\left(1 + \frac{1}{2\sigma_i \gamma_i}\right)
=\frac{1}{q_k}\left(1 + \frac{1}{2\tau_k \lambda}\right)
=\Gamma.
\end{equation}
Comparing the above equality with the definition of $\Gamma$ 
in~\eqref{eqn:Gamma-def-theta-saga}, we have
\[
\frac{3\|\Xik\|^2}{2\theta q_k \lambda\gamma_i}
\leq\frac{1}{2\sigma_i \gamma_i}
\qquad\textrm{and}\qquad
\frac{3n\|\Xik\|^2}{2\theta p_i m\lambda\gamma_i}
\leq\frac{1}{2\tau_k \lambda},
\qquad
\]
which implies that
\begin{equation}\label{eqn:theta-lambda-theta-gamma}
  \frac{6\sigma_i\|\Xik\|^2}{q_k} \leq 2\theta\lambda 
  \qquad\textrm{and}\qquad
  \frac{6n\tau_k\|\Xik\|^2}{m p_i} \leq 2\theta\gamma_i
\end{equation}
hold for all $i=1,\ldots,m$ and $k=1,\ldots,n$. Therefore, we have
\begin{eqnarray}
&&  \sum_{k=1}^n\frac{6\tau_k\|\Xik\|^2}{m p_i}
 =\frac{1}{n}\sum_{k=1}^n\frac{6n\tau_k\|\Xik\|^2}{m p_i}
 \leq 2\theta\gamma_i, 
 \qquad i=1,\ldots,m, \label{eqn:sum-tau-k-saga}\\
&&  \sum_{i=1}^m\frac{6\sigma_i\|\Xik\|^2}{m q_k}
 = \frac{1}{m}\sum_{i=1}^m \frac{6\sigma_i\|\Xik\|^2}{q_k} 
 \leq 2\theta\lambda, 
 \qquad k=1,\ldots,n. \label{eqn:sum-sigma-i-saga}
\end{eqnarray}
Now we consider the inequality~\eqref{eqn:saga-pd-one-step}, and examine
the ratio between the coefficients of $\|\ait-\aiopt\|^2$ and
$\E_t[\|\aitp-\aiopt\|^2]$.
Using~\eqref{eqn:sum-tau-k-saga} and~\eqref{eqn:sigma-tau-Gamma-saga}, we have
\begin{equation}\label{eqn:dual-coeff-ratio-saga}
\frac{\frac{1}{p_i}\left(\frac{1}{2\sigma_i}+\gamma_i
\right)-\gamma_i+\sum_{k=1}^n\frac{6\tau_k\|\Xik\|^2}{m p_i}}{
  \frac{1}{p_i}\left(\frac{1}{2\sigma_i}+\gamma_i\right)}
\leq 1 - 
\frac{(1-2\theta)\gamma_i}{\frac{1}{p_i}\left(\frac{1}{2\sigma_i}
	+\gamma_i\right)}
=1-\frac{1-2\theta}{\Gamma}.
\end{equation}
Similarly, the ratio between the coefficients of $\|\wkt-\wkopt\|^2$ and
$\E_t[\|\wktp-\wkopt\|^2]$ can be bounded 
using~\eqref{eqn:sum-sigma-i-saga} and~\eqref{eqn:sigma-tau-Gamma-saga}:
\begin{equation}\label{eqn:primal-coeff-ratio-saga}
\frac{\frac{1}{q_k}\left(\frac{1}{2\tau_k}+\lambda
\right)-\lambda+\sum_{i=1}^m\frac{6\sigma_i\|\Xik\|^2}{m q_k}}{
  \frac{1}{q_k}\left(\frac{1}{2\tau_k}+\lambda\right)}
\leq 1 - 
\frac{(1-2\theta)\lambda}{\frac{1}{q_k}\left(\frac{1}{2\tau_k}+\lambda\right)}
=1-\frac{1-2\theta}{\Gamma}.
\end{equation}
We notice that in~\eqref{eqn:saga-pd-one-step},
the ratios between the coefficients of
$\zeta_{ik} \bigl\|\Aikt-\aiopt\bigr\|^2$ and
$\zeta_{ik}\E_t\!\left[\bigl\|\Aiktp-\aiopt\bigr\|^2\right]$, as well as
and that of
$\xi_{ik}\bigl\|\bigl(\Wikt\bigr)^T-\wkopt\bigr\|^2$ and
$\xi_{ik}\E_t\bigl[\bigl\|\bigl(\Wiktp\bigr)^T-\wkopt\bigr\|^2\bigr]$, 
are all $1-\frac{1}{3}p_i q_k$.
By definition of $\Gamma$ in~\eqref{eqn:Gamma-def-theta-saga}, we have
\begin{equation}\label{eqn:1-third-piqk}
  1-\frac{1}{3}p_i q_k ~\leq~ 1-\frac{1}{3\Gamma},
  \qquad i=1,\ldots,m, \quad k=1,\ldots,n.
\end{equation}
We choose $\theta=1/3$ so that the ratios
in~\eqref{eqn:dual-coeff-ratio-saga} and \eqref{eqn:primal-coeff-ratio-saga}
have the same bound $1-\frac{1}{3\Gamma}$.
Therefore, it follows from inequality~\eqref{eqn:saga-pd-one-step} that
\begin{eqnarray}
&& \sum_{i=1}^m \frac{\Gamma\gamma_i}{m}\E_t\bigl[\|\aitp-\aiopt\|^2\bigr]
+\sum_{k=1}^n \Gamma\lambda \E_t\bigl[\|\wktp-\wkopt\|^2\bigr] \nonumber \\
&& + \sum_{i=1}^m\sum_{k=1}^n 
\zeta_{ik}\E_t\bigl[\bigl\|\Aiktp-\aiopt\bigr\|^2\bigr]
+ \sum_{i=1}^m\sum_{k=1}^n 
\xi_{ik}\E_t\bigl[\bigl\|\bigl(\Wiktp\bigr)^T-\wkopt\bigr\|^2\bigr].
\nonumber \\ 
&\leq & \left(1-\frac{1}{3\Gamma}\right) \Biggl(
  \sum_{i=1}^m \frac{\Gamma\gamma_i}{m}\|\ait-\aiopt\|^2 
+\sum_{k=1}^n \Gamma\lambda\|\wkt-\wkopt\|^2 \nonumber \\
&& \qquad\qquad
+ \sum_{i=1}^m\sum_{k=1}^n \zeta_{ik} \bigl\|\Aikt-\aiopt\bigr\|^2  
+ \sum_{i=1}^m\sum_{k=1}^n \xi_{ik}\bigl\|\bigl(\Wikt\bigr)^T-\wkopt\bigr\|^2
\Biggr). 
\label{eqn:saga-one-step-reduction}
\end{eqnarray}
Let's define
\[
\Dt=\lambda\|\wt\!-\wopt\|^2+\frac{1}{m}\sum_{i=1}^m\gamma_i\|\ait\!-\aiopt\|^2
+ \sum_{i=1}^m\sum_{k=1}^n \frac{\zeta_{ik}}{\Gamma}
\bigl\|\Aikt\!-\aiopt\bigr\|^2
+ \sum_{i=1}^m\sum_{k=1}^n \frac{\xi_{ik}}{\Gamma}
\bigl\|\bigl(\Wikt\bigr)^T\!\!-\wkopt\bigr\|^2.
\]
Then~\eqref{eqn:saga-one-step-reduction} implies
\begin{equation}\label{eqn:saga-Delta-converge}
\E\left[\Dt\right] \leq \left(1-\frac{1}{3\Gamma}\right)^{\! t} \Dini,
\end{equation}
where the expectation is taken with respect to all random variables 
generated by Algorithm~\ref{alg:dscovr-saga} up to iteration~$t$.

By the definition of $\xi_{ik}$ in~\eqref{eqn:xi-zeta}, we have
\[
  \frac{\xi_{ik}}{\Gamma}
  = \frac{3\sigma_i\|\Xik\|^2}{m p_i q_k^2}\frac{1}{\Gamma} 
  \leq \frac{\theta\lambda}{mp_i q_k} \frac{1}{\Gamma}
  \leq \frac{\theta\lambda}{m}
  = \frac{\lambda}{3m},
\]
where the first inequality is due to~\eqref{eqn:theta-lambda-theta-gamma}
and the second inequality is due to the relation $\Gamma\geq\frac{1}{p_i q_k}$
from the definition of~$\Gamma$ in~\eqref{eqn:Gamma-def-theta-saga}.
Similarly, we have
\[
  \frac{\zeta_{ik}}{\Gamma}
  = \frac{3\tau_k\|\Xik\|^2}{m^2 p_i^2 q_k}\frac{1}{\Gamma} 
  \leq \frac{\theta \gamma_i}{m n p_i q_k} \frac{1}{\Gamma}
  \leq \frac{\theta\gamma_i}{3mn}
  = \frac{\gamma_i}{3mn}.
\]
Moreover, by the construction in~\eqref{eqn:Wt-recursion}
and~\eqref{eqn:At-recursion}, we have for $t=0$,
\begin{eqnarray*}
\Aikini &=& \aiini, \quad\textrm{for}\quad k=1,\ldots,n 
  \quad\textrm{and}\quad i=1,\ldots,m, \\
  \bigl(\Wikini\bigr)^T &=& \wkini, \quad\textrm{for}\quad i=1,\ldots,m
  \quad\textrm{and}\quad k=1,\ldots,n. 
\end{eqnarray*}
Therefore, the last two terms in the definition of $\Dini$ can be bounded as
\begin{eqnarray*}
&& \sum_{i=1}^m\sum_{k=1}^n \frac{\zeta_{ik}}{\Gamma}
\bigl\|\Aikini-\aiopt\bigr\|^2
+ \sum_{i=1}^m\sum_{k=1}^n \frac{\xi_{ik}}{\Gamma}
\bigl\|\bigl(\Wikini\bigr)^T-\wkopt\bigr\|^2 \\
&\leq& \sum_{i=1}^m\sum_{k=1}^n \frac{\gamma_i}{3mn}
\bigl\|\aiini-\aiopt\bigr\|^2
+ \sum_{i=1}^m\sum_{k=1}^n \frac{\lambda}{3m}
\bigl\|\wkini-\wkopt\bigr\|^2 \\
&=& \frac{1}{3m}\sum_{i=1}^m \gamma_i \bigl\|\aiini-\aiopt\bigr\|^2
+ \frac{\lambda}{3} \bigl\|\wini-\wopt\bigr\|^2 ,
\end{eqnarray*}
which implies
\[
\Dini \leq \frac{4}{3}\left( \lambda\bigl\|\wini-\wopt\bigr\|^2
+\frac{1}{m}\sum_{i=1}^m \gamma_i \bigl\|\aiini-\aiopt\bigr\|^2 \right).
\]
Finally, combining with~\eqref{eqn:saga-Delta-converge}, we have
\[
\E\left[\Dt\right] \leq \left(1-\frac{1}{3\Gamma}\right)^{t} 
\frac{4}{3} \Biggl( \lambda\bigl\|\wini-\wopt\bigr\|^2
+\frac{1}{m}\sum_{i=1}^m \gamma_i\bigl\|\aiini-\aiopt\bigr\|^2 \Biggr),
\]
which further implies the desired result.

\section{Proof of Theorem~\ref{thm:dscovr-accl}}
\label{sec:proof-accl}

To simplify the presentation, we present the proof for the case
$\gamma_i=\gamma$ for all $i=1,\ldots,m$.
It is straightforward to generalize to the case
where the $\gamma_i$'s are different.

\begin{lemma}\label{lem:ppa-contraction}
Let $g:\R^D\to\R$ be $\lambda$-strongly convex, 
and $f_i^*:\R^{N_i}\to\R\cup\{\infty\}$ 
be $\gamma$-strongly convex over its domain.
Given any $\wtld\in\R^{d}$ and $\atld\in\R^N$,
we define the following two functions:
\begin{eqnarray}
\Lagr(w,\alpha) &=& g(w) + \frac{1}{m}\alpha^T X w 
-\frac{1}{m}\sum_{i=1}^m f_i*(\alpha_i),
\label{eqn:simple-Lagrangian} \\
\Lagr_\delta(w,\alpha) 
&=& \Lagr(w,\alpha) + \frac{\delta\lambda}{2}\|w-\wtld\|^2 
- \frac{\delta\gamma}{2m}\|\alpha-\atld\|^2 .
\label{eqn:proximal-Lagrangian}
\end{eqnarray}
Let $(\wopt,\aopt)$ and $(\wtldopt,\atldopt)$ be the (unique) saddle points
of $\Lagr(w,\alpha)$ and $\Lagr_\delta(w,\alpha)$, respectively.
Then we have
\begin{eqnarray}
\lambda\|\wtld-\wtldopt\|^2 +\frac{\gamma}{m}\|\atld-\atldopt\|^2
&\leq&\lambda\|\wtld-\wopt\|^2 +\frac{\gamma}{m}\|\atld-\aopt\|^2, 
\label{eqn:ppa-tldopt-close}\\
\left( \lambda\|\wtldopt-\wopt\|^2
+\frac{\gamma}{m}\|\atldopt-\aopt\|^2\right)^{1/2}
&\leq& \frac{\delta}{1+\delta}
\left(\lambda\|\wtld-\wopt\|^2+\frac{\gamma}{m}\|\atld-\aopt\|^2\right)^{1/2}.
\label{eqn:ppa-tldopt-contract}
\end{eqnarray}
\end{lemma}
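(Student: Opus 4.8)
Throughout, write $z=(w,\alpha)\in\R^d\times\R^N$ and equip this space with the weighted inner product $\langle z,z'\rangle_\Omega:=\lambda\langle w,w'\rangle+\tfrac{\gamma}{m}\langle\alpha,\alpha'\rangle$, so that $\|z\|_\Omega^2=\Omega(w,\alpha)$ (recall $\gamma_i=\gamma$ in this lemma). Put $\zopt=(\wopt,\aopt)$, $\ztldopt=(\wtldopt,\atldopt)$, $\ztld=(\wtld,\atld)$. Then \eqref{eqn:ppa-tldopt-close} reads $\|\ztld-\ztldopt\|_\Omega^2\le\|\ztld-\zopt\|_\Omega^2$ and \eqref{eqn:ppa-tldopt-contract} reads $\|\ztldopt-\zopt\|_\Omega\le\tfrac{\delta}{1+\delta}\|\ztld-\zopt\|_\Omega$. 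The plan is to deduce both from the single estimate
\[
\langle\ztld-\ztldopt,\;\ztldopt-\zopt\rangle_\Omega\;\ge\;\tfrac1\delta\,\|\ztldopt-\zopt\|_\Omega^2,
\]
which is the contraction inequality for the weighted proximal-point step implicit in $\Lagr_\delta$.

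\textbf{Proving the key estimate.} This is where the work is. First I would observe that the saddle-subdifferential operator $z\mapsto T(z):=\partial_w\Lagr(z)\times\partial_\alpha(-\Lagr)(z)$ is strongly monotone in the $\Omega$-metric: for $\xi_j\in T(z_j)$, $j=1,2$,
\[
\langle\xi_1-\xi_2,\;z_1-z_2\rangle\;\ge\;\lambda\|w_1-w_2\|^2+\tfrac{\gamma}{m}\|\alpha_1-\alpha_2\|^2\;=\;\|z_1-z_2\|_\Omega^2,
\]
because the bilinear coupling $\tfrac1m\alpha^TXw$ contributes the cross terms $\pm\tfrac1m\langle X(w_1-w_2),\alpha_1-\alpha_2\rangle$, which cancel, while $\lambda$-strong convexity of $g$ and $\gamma$-strong convexity of each $f_i^*$ (hence $\tfrac{\gamma}{m}$-strong convexity of $\tfrac1m\sum_i f_i^*$) supply the two quadratic terms. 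Next, the optimality conditions: $0\in T(\zopt)$, while for $\Lagr_\delta$ there is $\xi\in T(\ztldopt)$ with $\langle\xi,v\rangle=-\delta\,\langle\ztldopt-\ztld,v\rangle_\Omega$ for all $v$ — the extra term is the gradient of the perturbation $\tfrac{\delta\lambda}{2}\|w-\wtld\|^2-\tfrac{\delta\gamma}{2m}\|\alpha-\atld\|^2$ in \eqref{eqn:proximal-Lagrangian}, whose $w$- and $\alpha$-scalings match exactly the weights $\lambda$ and $\tfrac\gamma m$ of $\Omega$, so it equals $\delta$ times the $\Omega$-displacement $\ztldopt-\ztld$. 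Substituting $z_1=\ztldopt$, $z_2=\zopt$, $\xi_1=\xi$, $\xi_2=0$ into strong monotonicity gives $-\delta\langle\ztldopt-\ztld,\ztldopt-\zopt\rangle_\Omega\ge\|\ztldopt-\zopt\|_\Omega^2$, which rearranges to the displayed estimate. I expect the only delicate point to be pinning down this perturbation-to-$\Omega$-displacement identification and the cross-term cancellation; a subgradient-free alternative is to add the four inequalities coming from strong convexity/concavity of $\Lagr(\cdot,\aopt)$, $-\Lagr(\wopt,\cdot)$, $\Lagr_\delta(\cdot,\atldopt)$, $-\Lagr_\delta(\wtldopt,\cdot)$ evaluated at the opposite saddle points (with constants $\lambda,\gamma$ and $(1+\delta)\lambda,(1+\delta)\gamma$ respectively), whereupon the $\Lagr$-values cancel and the $\wtld,\atld$-centered quadratics recombine via the parallelogram identity into the same inequality.

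\textbf{From the key estimate to the two claims.} Expand $\|\ztld-\zopt\|_\Omega^2=\|\ztld-\ztldopt\|_\Omega^2+2\langle\ztld-\ztldopt,\ztldopt-\zopt\rangle_\Omega+\|\ztldopt-\zopt\|_\Omega^2$. For \eqref{eqn:ppa-tldopt-close}, the key estimate makes the last two terms sum to $\bigl(\tfrac2\delta+1\bigr)\|\ztldopt-\zopt\|_\Omega^2\ge0$, so $\|\ztld-\zopt\|_\Omega^2\ge\|\ztld-\ztldopt\|_\Omega^2$; writing out $\|\cdot\|_\Omega^2$ with $\gamma_i=\gamma$ gives \eqref{eqn:ppa-tldopt-close} verbatim. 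For \eqref{eqn:ppa-tldopt-contract}, additionally note that Cauchy--Schwarz applied to the key estimate yields $\|\ztld-\ztldopt\|_\Omega\ge\tfrac1\delta\|\ztldopt-\zopt\|_\Omega$, hence $\|\ztld-\ztldopt\|_\Omega^2\ge\tfrac1{\delta^2}\|\ztldopt-\zopt\|_\Omega^2$; combined with the middle-term bound $2\langle\ztld-\ztldopt,\ztldopt-\zopt\rangle_\Omega\ge\tfrac2\delta\|\ztldopt-\zopt\|_\Omega^2$ this gives $\|\ztld-\zopt\|_\Omega^2\ge\bigl(\tfrac1{\delta^2}+\tfrac2\delta+1\bigr)\|\ztldopt-\zopt\|_\Omega^2=\tfrac{(1+\delta)^2}{\delta^2}\|\ztldopt-\zopt\|_\Omega^2$, and taking square roots is \eqref{eqn:ppa-tldopt-contract}. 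The degenerate case $\ztldopt=\zopt$ is trivial throughout.
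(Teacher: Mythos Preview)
Your proof is correct and follows essentially the same route as the paper. The paper derives the identical key inequality~\eqref{eqn:ppa-pd-norms}, which in your notation is $\|\ztldopt-\zopt\|_\Omega^2+\delta\langle\ztldopt-\ztld,\ztldopt-\zopt\rangle_\Omega\le 0$, i.e.\ exactly your key estimate; it does so by writing out the first-order optimality conditions and the strong convexity inequalities for $\Lagr(\cdot,\aopt)$ and $-\Lagr(\wopt,\cdot)$ separately and adding them, which is just your strong-monotonicity argument unpacked (the paper even remarks that the monotone-operator proof is the alternative). The only cosmetic differences are in extracting the two claims: for~\eqref{eqn:ppa-tldopt-close} the paper drops the $\|\ztldopt-\zopt\|_\Omega^2$ term and uses AM--GM on the remaining inner product, whereas you keep everything and use the polarization expansion of $\|\ztld-\zopt\|_\Omega^2$; for~\eqref{eqn:ppa-tldopt-contract} the paper applies Cauchy--Schwarz to $\langle\ztld-\zopt,\ztldopt-\zopt\rangle_\Omega$ after the expansion $\ztldopt-\ztld=(\ztldopt-\zopt)+(\zopt-\ztld)$, while you apply it to $\langle\ztld-\ztldopt,\ztldopt-\zopt\rangle_\Omega$ and then square-expand. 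Both routes are equally short.
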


\begin{proof}
This lemma can be proved using the theory of monotone operators
\citep[e.g.,][]{Rockafellar76PPA,RyuBoyd2016},
as done by \citet{BalamuruganBach2016}.
Here we give an elementary proof based on first-order optimality conditions.

By assumption, we have
\begin{eqnarray*}
(\wopt,\aopt) &=& \arg\,\min_{w}\,\max_{\alpha} \Lagr(w,\alpha), \\
(\wtldopt,\atldopt) &=& \arg\,\min_{w}\,\max_{\alpha} \Lagr_\delta(w,\alpha).
\end{eqnarray*}
Optimality conditions for $(\wtldopt,\atldopt)$ as a saddle point of
$\Lagr_\delta$:
\begin{eqnarray}
-\frac{1}{m}X^T\atldopt - \delta\lambda\left(\wtldopt-\wtld\right)
&\in& \partial g(\wtldopt),  \label{eqn:g-wtldopt-subg}\\
X\wtldopt - \delta\gamma\left(\atldopt-\atld\right)
&\in& \partial \sum_{i=1}^m f_i^*(\atldopt). \label{eqn:Psi-atldopt-subg}
\end{eqnarray}
For any $\xi\in\partial g(\wtldopt)$, it holds that
$\xi + \frac{1}{m} X^T \aopt \in \partial_w \Lagr(\wtldopt,\aopt)$. 
Therefore using~\eqref{eqn:g-wtldopt-subg} we have
\[
\frac{1}{m}X^T(\aopt-\atldopt) - \delta\lambda\left(\wtldopt-\wtld\right)
~\in~ \partial_w \Lagr(\wtldopt, \aopt).
\]
Since $\Lagr(w,\aopt)$ is strongly convex in~$w$ with convexity 
parameter~$\lambda$, we have
\begin{equation}\label{eqn:ppa-w-sc}
\Lagr(\wtldopt,\aopt) +\left(
\frac{1}{m}X^T(\aopt-\atldopt) - \delta\lambda\left(\wtldopt-\wtld\right)
\right)^T (\wopt-\wtldopt) + \frac{\lambda}{2}\|\wtldopt-\wopt\|^2
\leq \Lagr(\wopt,\aopt).
\end{equation}
Similarly, we have 
\[
\frac{1}{m}X^T(\wtldopt-\wopt)-\frac{\delta\gamma}{m}\left(\atldopt-\atld\right)
~\in~ \partial_\alpha \left(-\Lagr(\wopt, \atldopt)\right),
\]
and since $-\Lagr(\wopt,\alpha)$ is strongly convex in~$\alpha$ with convexity 
parameter~$\frac{\gamma}{m}$, we have
\begin{equation}\label{eqn:ppa-a-sc}
-\Lagr(\wopt,\atldopt) +\left(
\frac{1}{m}X^T(\wtldopt-\wopt)-\frac{\delta\gamma}{m}\left(\atldopt-\atld\right)
\right)^T (\aopt-\atldopt) + \frac{\gamma}{2m}\|\atldopt-\aopt\|^2
\leq - \Lagr(\wopt,\aopt).
\end{equation}
Adding inequalities~\eqref{eqn:ppa-w-sc} and~\eqref{eqn:ppa-a-sc} together gives
\begin{eqnarray*}
&&\!\!\!  \Lagr(\wtldopt,\aopt)-\Lagr(\wopt,\atldopt)  \\
&+&\!\!\! \delta\lambda(\wtldopt-\wtld)^T(\wtldopt-\wopt)
+ \frac{\delta\gamma}{m}(\atldopt-\atld)^T(\atldopt-\aopt)
+ \frac{\lambda}{2}\|\wtldopt-\wopt\|^2
+ \frac{\gamma}{2m}\|\atldopt-\aopt\|^2
\leq 0.
\end{eqnarray*}
Combining with the inequality 
\[
\Lagr(\wtldopt,\aopt)-\Lagr(\wopt,\atldopt)
\geq\frac{\lambda}{2}\|\wtldopt-\wopt\|^2 
+\frac{\gamma}{2m}\|\atldopt-\aopt\|^2,
\]
we obtain
\begin{equation}\label{eqn:ppa-pd-norms}
\lambda\|\wtldopt-\wopt\|^2
+ \frac{\gamma}{m}\|\atldopt-\aopt\|^2
+ \delta\lambda(\wtldopt-\wtld)^T(\wtldopt-\wopt)
+ \frac{\delta\gamma}{m}(\atldopt-\atld)^T(\atldopt-\aopt)
\leq 0.
\end{equation}

\emph{Proof of the first claim.}
We can drop the nonnegative terms on the left-hand side 
of~\eqref{eqn:ppa-pd-norms} to obtain
\[
\lambda(\wtldopt-\wtld)^T(\wtldopt-\wopt)
+ \frac{\gamma}{m}(\atldopt-\atld)^T(\atldopt-\aopt)
\leq 0.
\]
The two inner product terms on the left-hand side of the inequality above
can be expanded as follows:
\begin{eqnarray*}
(\wtldopt-\wtld)^T(\wtldopt-\wopt)
&=& (\wtldopt-\wtld)^T(\wtldopt-\wtld+\wtld-\wopt)
~=~ \|\wtldopt-\wtld\|^2 + (\wtldopt-\wtld)^T(\wtld-\wopt)\,, \\
(\atldopt-\atld)^T(\atldopt-\aopt)
&=& (\atldopt-\atld)^T(\atldopt-\atld+\atld-\aopt)
~=~ \|\atldopt-\atld\|^2 + (\atldopt-\atld)^T(\atld-\aopt).
\end{eqnarray*}
Combining them with the last inequality, we have
\begin{eqnarray*}
\lambda\|\wtldopt-\wtld\|^2 +\frac{\gamma}{m}\|\atldopt-\atld\|^2
&\leq& - \lambda(\wtldopt-\wtld)^T(\wtld-\wopt) 
- \frac{\gamma}{m}(\atldopt-\atld)^T(\atld-\aopt) \\
&\leq&\frac{\lambda}{2}\left(\|\wtldopt-\wtld\|^2+\|\wtld-\wopt\|^2\right)
+\frac{\gamma}{2m}\left(\|\atldopt-\atld\|^2+\|\atld-\aopt\|^2\right),
\end{eqnarray*}
which implies
\[
\frac{\lambda}{2}\|\wtldopt-\wtld\|^2 +\frac{\gamma}{2m}\|\atldopt-\atld\|^2
~\leq~\frac{\lambda}{2}\|\wtld-\wopt\|^2 +\frac{\gamma}{2m}\|\atld-\aopt\|^2.
\]

\emph{Proof of the second claim.}
We expand the two inner product terms in~\eqref{eqn:ppa-pd-norms} as follows:
\begin{align*}
(\wtldopt-\wtld)^T(\wtldopt-\wopt)
&= (\wtldopt-\wopt+\wopt-\wtld)^T(\wtldopt-\wopt)
= \|\wtldopt-\wopt\|^2 + (\wopt-\wtld)^T(\wtldopt-\wopt), \\
(\atldopt-\atld)^T(\atldopt-\aopt)
&= (\atldopt-\aopt+\aopt-\atld)^T(\atldopt-\aopt)
= \|\atldopt-\aopt\|^2 + (\aopt-\atld)^T(\atldopt-\aopt).
\end{align*}
Then~\eqref{eqn:ppa-pd-norms} becomes
\begin{eqnarray*}
&& (1+\delta)\lambda\|\wtldopt-\wopt\|^2  
+(1+\delta)\frac{\Gamma}{m}\|\wtldopt-\wopt\|^2 \\
&\leq& \delta\lambda(\wtld-\wopt)^T(\wtldopt-\wopt)
+ \frac{\delta\gamma}{m}(\atld-\aopt)^T(\atldopt-\aopt) \\
&\leq& \delta 
\left( \lambda\|\wtld-\wopt\|^2+\frac{\gamma}{m}\|\atld-\aopt\|^2\right)^{1/2}
\left( \lambda\|\wtldopt-\wopt\|^2
	+\frac{\gamma}{m}\|\atldopt-\aopt\|^2\right)^{1/2},
\end{eqnarray*}
where in the second inequality we used the Cauchy-Schwarz inequality.
Therefore we have
\[
\left( \lambda\|\wtldopt-\wopt\|^2
+\frac{\gamma}{m}\|\atldopt-\aopt\|^2\right)^{1/2}
~\leq~ \frac{\delta}{1+\delta}
\left(\lambda\|\wtld-\wopt\|^2+\frac{\gamma}{m}\|\atld-\aopt\|^2\right)^{1/2},
\]
which is the desired result.
\end{proof}

To simplify notations in the rest of the proof, we let $z=(w,\alpha)$ 
and define
\[
\|z\| = \left(\lambda\|w\|^2+\frac{\gamma}{m}\|\alpha\|^2\right)^{1/2}.
\]
The results of Lemma~\ref{lem:ppa-contraction} can be written as
\begin{eqnarray}
\|\ztld-\ztldopt\| &\leq&\|\ztld-\zopt\|,
\label{eqn:ppa-z-close} \\
\|\ztldopt-\zopt\| &\leq&\frac{\delta}{1+\delta}\|\ztld-\zopt\|.
\label{eqn:ppa-z-contract}
\end{eqnarray}
Next consider the convergence of Algorithm~\ref{alg:accl-dscovr}, 
and follow the proof ideas in \citet[Section D.3]{BalamuruganBach2016}.

If we use DSCOVR-SVRG (option~1) in each round of 
Algorithm~\ref{alg:accl-dscovr},
then Algorithm~\ref{alg:dscovr-svrg} is called with initial point
$\ztldr=(\wtldr,\atldr)$ and after~$S$ stages, it outputs $\ztldrp$ 
as an approximate saddle point of $\Lagr^{(r)}_\delta(w,\alpha)$,
which is defined in~\eqref{eqn:Lagrangian-delta}.
Then Theorem~\ref{thm:dscovr-svrg} implies
\begin{equation}\label{eqn:ppa-svrg-rate}
\E\bigl[\|\ztldrp-\ztldoptr\|^2\bigr] \leq \left(\frac{2}{3}\right)^S
\E\bigl[\|\ztldr-\ztldoptr\|^2\bigr],
\end{equation}
where $\ztldoptr$ denotes the unique saddle point of
$\Lagr^{(r)}_\delta(w,\alpha)$.
By Minkowski's inequality, we have
\[
  \left(\E\bigl[\|\ztldrp-\zopt\|^2\bigr]\right)^{1/2} 
  \leq \left(\E\bigl[\|\ztldrp-\ztldoptr\|^2\bigr]\right)^{1/2} 
  + \left(\E\bigl[\|\ztldoptr-\zopt\|^2\bigr]\right)^{1/2},
\]
where $\zopt$ is the unique saddle point of $\Lagr(w,\alpha)$.
Using~\eqref{eqn:ppa-svrg-rate}, \eqref{eqn:ppa-z-close}
and~\eqref{eqn:ppa-z-contract}, we obtain
\begin{eqnarray}
\left(\E\bigl[\|\ztldrp-\zopt\|^2\bigr]\right)^{1/2}
&\leq& \left(\frac{2}{3}\right)^{S/2}
 \left(\E\bigl[\|\ztldr-\ztldoptr\|^2\bigr]\right)^{1/2}
 + \left(\E\bigl[\|\ztldoptr-\zopt\|^2\bigr]\right)^{1/2} \nonumber \\
&\leq& \left(\frac{2}{3}\right)^{S/2} 
 \left(\E\bigl[\|\ztldr-\zopt\|^2\bigr]\right)^{1/2}
 + \frac{\delta}{1+\delta} \left(\E\bigr[\|\ztldr-\zopt\|^2\bigr]\right)^{1/2}
\nonumber \\
&=& \left[\left(\frac{2}{3}\right)^{S/2} +\frac{\delta}{1+\delta}\right] 
\left(\E\bigl[\|\ztldr-\zopt\|^2\bigr]\right)^{1/2},
\label{eqn:ppa-round-svrg}
\end{eqnarray}
Therefore, if $S\geq\frac{2\log(2(1+\delta))}{\log(3/2)}$, we have
\[
\left(\frac{2}{3}\right)^{S/2} +\frac{\delta}{1+\delta}
\leq \frac{1}{2(1+\delta)} +\frac{\delta}{1+\delta}
= \frac{1+2\delta}{2(1+\delta)} = 1 - \frac{1}{2(1+\delta)},
\]
which implies
\begin{equation}\label{eqn:ppa-round-converge}
\E\bigl[\|\ztldrp-\zopt\|^2\bigr]
~\leq~ \left(1 - \frac{1}{2(1+\delta)}\right)^2
\E\bigl[\|\ztldr-\zopt\|^2\bigr].
\end{equation}

If we use DISCOVR-SAGA (option~2) in Algorithm~\ref{alg:accl-dscovr}, 
then Algorithm~\ref{alg:dscovr-saga} is called with initial point
$\ztldr=(\wtldr,\atldr)$ and after~$M$ steps, it outputs $\ztldrp$ 
as an approximate saddle point of $\Lagr^{(r)}_\delta(w,\alpha)$.
Then Theorem~\ref{thm:dscovr-saga} implies
\[
\E\bigl[\|\ztldrp-\ztldoptr\|^2\bigr] 
\leq  \frac{4}{3} \left(1-\frac{1}{3\Gamma_{\delta}}\right)^{M}
\E\bigl[\|\ztldr-\ztldoptr\|^2\bigr].
\]
Using similar arguments as in~\eqref{eqn:ppa-round-svrg}, we have
\[
\left(\E\bigl[\|\ztldrp - \zopt\|\bigr]^2\right)^{1/2} 
~\leq~ \left[\frac{4}{3}\left(1-\frac{1}{3\Gamma_{\delta}}\right)^{M/2}
+\frac{\delta}{1+\delta}\right] 
\left(\E\bigl[\|\ztldr-\zopt\|^2\bigr]\right)^{1/2}.
\]
Therefore, 
if $M\geq 6\log\left(\frac{8(1+\delta)}{3}\right) \Gamma_{\delta}$, we have
\[
\frac{4}{3}\left(1-\frac{1}{3\Gamma_{\delta}}\right)^{M/2}
+\frac{\delta}{1+\delta}
\leq \frac{1}{2(1+\delta)} +\frac{\delta}{1+\delta}
= \frac{1+2\delta}{2(1+\delta)} = 1 - \frac{1}{2(1+\delta)},
\]
which implies the same inequality in~\eqref{eqn:ppa-round-converge}.

In summary, using either option~1 or option~2 
in Algorithm~\ref{alg:accl-dscovr}, we have
\[
\E\bigl[\|\ztldr-\zopt\|^2\bigr]
~\leq~ \left(1 - \frac{1}{2(1+\delta)}\right)^{2r}
\|\ztldini-\zopt\|^2.
\]
In order to have $\E\bigl[\|\ztldr-\zopt\|^2\bigr]\leq\epsilon$, 
it suffices to have 
$r\geq (1+\delta)\log\left(\|\ztldini-\zopt\|^2/\epsilon\right)$.

\bibliography{dscovr}

\end{document}